\newcommand{\p}{\mathbb{P}}
\newcommand{\ev}[1]{\mathbb{E}{#1}}
\newcommand{\e}[1]{\mathbb{E}}
\newcommand{\pr}[1]{\mathbb{P}\rbr{#1}}
\newcommand{\norm}[3]{\Vert #1 \Vert_{ #2 } ^{ #3 }}
\newcommand{\rbr}[1]{\left( #1 \right)}
\newcommand{\sbr}[1]{\left[ #1 \right]}
\newcommand{\cbr}[1]{\left\{ #1 \right\}}
\newcommand{\ddp}[2]{\left\langle #1, #2 \right\rangle}
\newcommand{\intr}{\int_{\mathbb{R}^d}}
\newcommand{\inti}{\int_{0}^{+\infty}}
\newcommand{\intc}[1]{\int_{0}^{#1}}
\newcommand{\Rd}{\mathbb{R}^d}
\newcommand{\R}{\mathbb{R}}
\newcommand{\T}[1]{\mathcal{T}_{#1}}
\DeclareMathOperator{\var}{Var}
\DeclareMathOperator{\cov}{Cov}
\DeclareMathOperator{\grad}{grad}
\definecolor{czerwony}{rgb}{1,0.3,0.3}
\definecolor{zielony}{rgb}{0.1,0.8,0.4}
\definecolor{blue}{rgb}{0.1,0.1,0.9}
\newcommand{\dd}[1]{\textnormal{d}#1}
\newcommand{\ceq}{\eqsim}
\newcommand{\cleq}{\lesssim}
\newtheorem{theorem}{Theorem}[section]
\newtheorem{lemma}[theorem]{Lemma}
\newtheorem{fact}[theorem]{Fact}
\newtheorem{cor}[theorem]{Corollary}
\theoremstyle{remark}
\newcommand{\eq}{\varphi}
\newcommand{\tree}[1]{\mathbb{T}_{#1}}
\DeclareMathOperator{\lab}{lab}
\DeclareMathOperator{\Lip}{Lip}
\newcommand{\cspace}[1]{\mathcal{C}^{#1}_c}
\newcommand{\pspace}[1]{\mathcal{P}^{#1}}
\title{$U$-statistics of Ornstein-Uhlenbeck branching particle system}
\author{\textbf{Radosław Adamczak\footnote{radamcz@mimuw.edu.pl. Research was partially supported by the MNiSW grant N N201 397437 and by the
Foundation for Polish Science.} and Piotr Miłoś\footnote{pmilos@mimuw.edu.pl (corresponding author). Research was partially supported by the MNiSW grant N N201 397537.}}\\Faculty of Mathematics, Informatics and Mechanics, University of Warsaw\\ul. Banacha 2, Warsaw, Poland}
\begin{document}

\maketitle

    \begin{abstract}
       We consider a branching particle system consisting of particles moving according to the Ornstein-Uhlenbeck process in $\Rd$ and undergoing a binary, supercritical branching with a constant rate $\lambda>0$. This system is known to fulfil a law of large numbers (under exponential scaling). Recently the question of the corresponding central limit theorem has been addressed. It turns out that the normalization and form of the limit in the CLT fall into three qualitatively different regimes, depending on the relation between the branching intensity and the parameters of the Orstein-Uhlenbeck process. In the present paper we extend those results to $U$-statistics of the system proving a law of large numbers and a central limit theorem.
~\\
~\\
MSC: primary 60F05; 60J80 secondary 60G20 \\
Keywords: Supercritical branching particle systems, $U$-statistics, Central limit theorem.
\end{abstract}

\section{Introduction} 
    \label{sec:introduction}
   We consider a single particle located at time $t=0$ at $x \in \R^d$, moving  according to the Orstein-Uhlenbeck process and branching after exponential time independent of the spatial movement. The branching is supercritical and given by the generating function
    \[
        F(s) := p s^2 + (1-p), \quad p>\frac{1}{2}.
    \]
The offspring particles follow the same dynamics (independently of each other). We will refer to this system of particles as the OU branching process and denote it by $X = \cbr{X_t}_{t\ge 0}$.


Formally, we identify the system with the empirical process, i.e. $X$ takes values in the space of Borel measures on $\R^d$ and for each Borel set $A$, $X_t(A)$ is the (random) number of particles at time $t$ in $A$.

It is well known (see e.g. \cite{Harris:2008wd}) that the system satisfies the law of large numbers, i.e. for all bounded continuous functions, conditionally on the set of non-extinction
\begin{equation}
    |X_t|^{-1} \ddp{X_t}{f} \rightarrow \ddp{\eq}{f},\quad a.s. \label{eq:basicLLN},
\end{equation}
where $|X_t|$ is the number of particles at time $t$, $\cbr{X_t(1), X_t(2),\ldots, X_t(|X_t|)}$ are their positions, $\ddp{X_t}{f} := \sum_{i=1}^{|X_t|}f(X_t(i))$ and $\eq$ is the invariant measure of the Orstein-Uhlenbeck process.


In a recent article \cite{Adamczak:2011kx}, we investigated second order behaviour of this system and proved central limit theorems corresponding to (\ref{eq:basicLLN}). It turns out that the behaviour of the system may fall into three qualitatively different categories, depending on the relation between the branching intensity and the parameters of the Orstein-Uhlenbeck process.

In the present article we extend these results on the LLN and CLT to the case of $U$-statistics of the system of arbitrary order $n \ge 1$, i.e. to random variables of the form
\begin{equation}
    U^n_t(f) := \sum_{ \substack{i_1,i_2, \ldots i_n=1 \\ i_k \neq i_j\text{ if } k\neq j}}^{|X_t|} f(X_t(i_1), X_t(i_2), \ldots, X_t(i_n))\label{eq:ustat}
\end{equation}
(note that with this notation $\ddp{X_t}{f}$ corresponds to $U$-statistics of order $n=1$). Our investigation parallels the classical and  well-developed theory of $U$-statistics of independent random variables, however we would like to point out that in our context additional interest in this type of functionals of the process $X$ stems from the fact that they capture 'average dependencies' between particles of the system. This will be seen from the form of the limit, which turns out to be more complicated than in the i.i.d. case.

The organization of the paper is as follows. After introducing the basic notation and preliminary facts in Section \ref{sec:prelim} we describe the main results of the paper in Section \ref{sec:results}.
Next (Section \ref{sec:k_equal_one}) we restate the results in the special case of $n=1$ (as proven in \cite{Adamczak:2011kx}) to serve as a starting point for the general case. Finally, in Section \ref{sec:proofs} we provide complete proofs for arbitrary $n$. We conclude with some remarks concerning the so called non-degenerate case (Section \ref{sec:last}).

\section{Preliminaries} \label{sec:prelim}
\subsection{Notation}
\label{sec:definitions_and_notation}
For a branching system $\cbr{X_t}_{t\geq 0}$, we denote by $|X_t|$ the number of particles at time $t$, and by $X_t(i)$ - the position of the $i$-th (in a certain ordering) particle at time $t$. We sometimes use $\ev{}_x$ or $\mathbb{P}_x$ to denote the fact that we calculate the expectation for the system starting from a particle located at $x$. We use also $\ev{}$ and $\mathbb{P}$ when this location is not relevant.

By $\rightarrow^d $ we denote the convergence in law. We use $\cleq, \ceq$ to denote the situation when an equality or inequality holds with a constant $c>0$, which is irrelevant for calculations. E.g. $f(x)\ceq g(x)$ means that there exists a constant $c>0$ such that $f(x) = c g(x)$.

By $x \circ y = \sum_{i=1}^d x_i y_i$ we denote  the standard scalar product of $x,y\in \Rd$, by $\|\cdot\|$ the corresponding Euclidean norm. By $\otimes^n$ we denote the $n$-fold tensor product.

We use also $\ddp{f}{\mu}:=\int_{\Rd} f(x) \mu{(\dd{x}) }$. 

In our paper we will use the Feynman diagrams. A diagram $\gamma$ labeled by $\cbr{1,2,\ldots}$ is a graph consisting of a set of edges $E_\gamma$ not having common endpoints, and unpaired vertices $A_\gamma$. We will use $r(\gamma)$ to denote the rank of the diagram i.e. the number of edges. For properties and more information we refer to \cite[Definition 1.35]{Janson:1997fk}.

In the paper we will use the space
\begin{equation}
	\pspace{}=\pspace{}(\R^{d}):=\cbr{f:\R^{d}\mapsto\R:f\,\text{ is continuous and } \exists_{n}\text{ such that }|f(x)|/\norm x{}n\rightarrow 0\text{ as }\norm x{}{}\rightarrow+\infty}, \label{eq:pspace}
\end{equation}
We endow this space with the following norm
\begin{equation}
    \norm{f}{\pspace{}}{} =\norm{f}{\pspace{}(\R^d)}{} :=  \sup_{x\in \R^d} |n(x)f(x)|,
\end{equation}
where $n(x):=\exp\rbr{-\sum_{i=1}^d |x_i|}$. We will also use
\[
	\cspace{} = \cspace{}(\R^d),
\]
to denote the space of continuous compactly supported functions.

Given a function $f\in \pspace{}(\R^d)$ we will implicitly understand its derivatives (e.g. $\frac{
\partial f}{
\partial x_i}$) in the space of tempered distributions (see e.g. \cite[p. 173]{Rudin:1973fk}).
\subsection{Basic facts on the Galton-Watson process}
The number of particles $\cbr{|X_t|}_{t\geq 0}$ is the celebrated Galton-Watson process. We present basic properties of this process used in the paper. The main reference in this section is \cite{Athreya:2004xr}.
In our case the expected total number of particles grows exponentially at the rate
\begin{equation}
    \lambda_p :=(2p-1)\lambda. \label{eq:growth-rate}
\end{equation}
The process becomes extinct with probability (see \cite[Theorem I.5.1]{Athreya:2004xr})
\[
    p_e = \frac{1-p}{p}.
\]
We will denote the extinction and non-extinction events by $Ext$ and $Ext^c$ respectively. The process $V_t := e^{-\lambda_p t} |X_t|$ is a positive martingale. Therefore it converges (see also \cite[Theorem 1.6.1]{Athreya:2004xr})
\begin{equation}
    V_t \rightarrow V_\infty, \quad a.s. \:\:\text{ as }t\rightarrow +\infty. \label{eq:defV}
\end{equation}

We have the following simple fact (we refer to \cite{Adamczak:2011kx} for the proof).

\begin{fact} \label{fact:aaa}
    We have $\cbr{V_\infty = 0} = Ext$ and conditioned on non-extinction $V_\infty$ has the exponential distribution with parameter $\frac{2p-1}{p}$. We have $\ev{}(V_\infty) =1$ and $ \var(V_\infty) = \frac{1}{2p-1}$. $\ev{}e^{-4\lambda_p t} |X_t|^4$ is uniformly bounded, i.e. there exists $C>0$ such that for any $t\geq 0$ we have $\ev{}e^{-4\lambda_p t}|X_t|^4 \leq C$. Moreover, all moments are finite, i.e. for any $n\in \mathbb{N}$ and $t\geq 0$ we have $\ev{} |X_t|^n < +\infty$.
\end{fact}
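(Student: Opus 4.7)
The plan is to extract the distribution of $V_\infty$ from the explicit Laplace transform, which itself comes from solving the Kolmogorov ODE for the generating function of $|X_t|$. I would introduce $F_t(s):=\mathbb{E}_1[s^{|X_t|}]$ (single initial particle). Conditioning on the first branching time yields
\[
    \partial_tF_t(s)=\lambda\bigl(pF_t(s)^2-F_t(s)+(1-p)\bigr),\qquad F_0(s)=s.
\]
Since $pF^2-F+(1-p)=p(F-1)(F-p_e)$ with $p_e=(1-p)/p$, separation of variables and partial fractions integrate this in closed form: $F_t(s)$ is a M\"obius transform of $s$ whose coefficients depend explicitly on $e^{-\lambda_p t}$ and $p_e$.

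To identify the law of $V_\infty$ I would then compute the Laplace transform $\phi(\theta):=\mathbb{E}_1 e^{-\theta V_\infty}$. Since $F_t$ takes values in $[0,1]$, bounded convergence gives
\[
    \phi(\theta)=\lim_{t\to\infty}F_t\bigl(\exp(-\theta e^{-\lambda_p t})\bigr),
\]
and plugging in the explicit form of $F_t$ together with $\exp(-\theta e^{-\lambda_p t})\approx 1-\theta e^{-\lambda_p t}$ would yield
\[
    \phi(\theta)=p_e+(1-p_e)\cdot\frac{(2p-1)/p}{(2p-1)/p+\theta}.
\]
Thus $\mathrm{Law}(V_\infty)=p_e\delta_0+(1-p_e)\,\mathrm{Exp}\bigl((2p-1)/p\bigr)$. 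Since $V_\infty=0$ on $Ext$ trivially and $\mathbb{P}(V_\infty=0)=p_e=\mathbb{P}(Ext)$, the two events coincide almost surely, so conditional on $Ext^c$ we have $V_\infty\sim\mathrm{Exp}((2p-1)/p)$. The identities $\mathbb{E}V_\infty=1$ and $\var(V_\infty)=1/(2p-1)$ then follow immediately from the exponential moments and the weight $1-p_e=(2p-1)/p$.

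For the bounds on $\mathbb{E}|X_t|^n$, I would differentiate the ODE for $F_t$ a total of $n$ times and evaluate at $s=1$: setting $\mu_n(t):=F_t^{(n)}(1)=\mathbb{E}[|X_t|(|X_t|-1)\cdots(|X_t|-n+1)]$, the Leibniz rule produces a linear ODE $\mu_n'(t)=n\lambda_p\mu_n(t)+g_n(t)$ whose inhomogeneity $g_n$ is a polynomial in $\mu_1,\dots,\mu_{n-1}$. By induction on $n$, $\mu_n$ is a polynomial in $e^{j\lambda_p t}$ for $j\le n$, so $\mathbb{E}|X_t|^n\le C_ne^{n\lambda_p t}$ for all $n\ge 1$ and $t\ge 0$; the case $n=4$ delivers the uniform bound on $\mathbb{E} e^{-4\lambda_p t}|X_t|^4$. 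The main obstacle is the explicit ODE solution and the careful limit computation for $\phi$: one must track how $e^{-\lambda_p t}$ enters the M\"obius form of $F_t$ and verify that, after substituting $s=\exp(-\theta e^{-\lambda_p t})$, the $\theta$-dependence survives cleanly in the limit. Once $\phi$ is in hand, the identification of $\{V_\infty=0\}=Ext$, the moment identities and the moment bounds on $|X_t|$ follow by routine calculation and induction.
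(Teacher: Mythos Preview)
The paper does not actually give a proof of this fact in the present text; it merely states it and refers the reader to \cite{Adamczak:2011kx} for the argument. There is therefore nothing to compare against directly.

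Your approach is correct and is the standard one for a continuous-time binary Galton--Watson process. The offspring generating function is linear fractional, so the Riccati ODE $\partial_t F_t = \lambda p (F_t-1)(F_t-p_e)$ integrates to an explicit M\"obius transform, from which the Laplace transform of $V_\infty$ can be extracted exactly as you describe; the mixture form $p_e\delta_0 + (1-p_e)\,\mathrm{Exp}((2p-1)/p)$ and the moment identities $\mathbb{E}V_\infty=1$, $\var(V_\infty)=1/(2p-1)$ follow. The identification $\{V_\infty=0\}=Ext$ uses the inclusion $Ext\subset\{V_\infty=0\}$ together with equality of probabilities, which you note. For the moment bounds, differentiating the ODE $n$ times at $s=1$ and solving the resulting linear ODE for the factorial moments by induction is again the standard route and yields $\mathbb{E}|X_t|^n \le C_n e^{n\lambda_p t}$; the case $n=4$ gives the stated uniform bound. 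Nothing is missing.
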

We will denote the variable $V_\infty$ conditioned on non-extinction by $W$.

\subsection{Basic facts on the Orstein-Uhlenbeck process}
\label{sub:ornstein_uhlenbeck}
We recall that the Ornstein-Uhlenbeck process is a time homogenous Markov process with the infinitesimal operator
    \begin{equation}
        L := \frac{1}{2}  \sigma^2 \Delta - \mu x\circ \nabla. \label{eq:ouOperator}
    \end{equation}
The corresponding semigroup will be denoted by $\T{}$. The density of the invariant measure of the Ornstein-Uhlenbeck process is given by
\begin{equation}
    \eq(x) := \rbr{\frac{\mu}{\pi \sigma^2}}^{d/2}\exp \rbr{-\frac{\mu}{\sigma^2} \norm{x}{}{2}}.   \label{eq:equilibrium}
\end{equation}

\subsection{Basic facts concerning $U$-statistics}

We will now briefly recall basic notation and facts concerning $U$-statistics. A $U$-statistic of degree $n$ based on an $\mathcal{X}$-valued sample $X_1,\ldots,X_N$
and a function $f \colon \mathcal{X}^n \to \R$, is a random variable of the form
\begin{displaymath}
\sum_{\substack{i_1,i_2, \ldots i_n=1 \\ i_k \neq i_j\text{ if } k\neq j}}^N f(X_{i_1},\ldots,X_{i_N}).
\end{displaymath}

The function $f$ is usually referred to as the kernel of the $U$-statistic. Without loss of generality it can be assumed that $f$ is symmetric i.e.  invariant under permutation of its arguments. We refer the reader to \cite{Lee:1990pd,Pena:1999bh} for more information on $U$-statistics of sequences of independent random variables.

In our case we will consider $U$-statistics based on the sequence of positions of particles from the branching system as defined by \eqref{eq:ustat}. We will be interested in weak convergence of properly normalised $U$-statistics when $t \to \infty$. Similarly as in the classical theory, the asymptotic behaviour of $U$-statistics depends heavily on the so called order of degeneracy of the kernel $f$, which we will briefly recall in Section \ref{sec:aux_Ustat}.

A function $f$ is called completely degenerate or canonical (with respect to some measure of reference $\eq$, which in our case will be the stationary measure of the
Ornstein-Uhlenbeck process) if
\begin{displaymath}
\int_{\mathcal{X}} f(x_1,\ldots,x_n) \eq(dx_k) = 0,
\end{displaymath}
for all $x_1,\ldots,x_{k-1},x_{k+1},\ldots,x_n \in \mathcal{X}$. The complete degeneracy may be considered a centredness condition, in the classical theory of $U$-statistics canonical kernels are counterparts of centred random variables from the theory of sums of independent random variables.
Their importance stems from the fact that each $U$-statistic can be decomposed into a sum of canonical $U$-statistics of different degrees, a fact known as the Hoeffding decomposition (see Section \ref{sec:aux_Ustat}).


\section{Main results} 
\label{sec:results}
This section is devoted to the presentation of our results. The proofs are deferred to Section \ref{sec:proofs}.

We start with  the following law of large numbers (throughout the article when dealing with $U$-statistics of order $n$ we will identify $\underbrace{\R^d\times\ldots \times\R^d}_{n}$ with $\R^{nd}$).
\begin{theorem} \label{fact:multiLLD}
    Let $\cbr{X_t}_{t\geq 0}$ be the OU branching system starting from $x\in\Rd$. Let us assume that $f:\R^{nd} \mapsto \R$ is a bounded continuous function. Then, on the set of non-extinction $Ext^c$ there is the convergence
    \begin{equation}
        \lim_{n \rightarrow  +\infty} |X_t|^{-n} U_t^n(f) = \ddp{f}{\eq^{\otimes n}} \; \text{a.s.} \label{eq:multiLLN}
    \end{equation}
Moreover, when $f\in \pspace{}(\R^{nd}) $, then the above convergence holds in probability.
\end{theorem}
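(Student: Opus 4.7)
The plan is to reduce the $U$-statistic asymptotics to the scalar LLN \eqref{eq:basicLLN} by recognising $|X_t|^{-n} U_t^n(f)$ as a small perturbation of the tensor power $\mu_t^{\otimes n}(f)$ of the random empirical measure $\mu_t := |X_t|^{-1} X_t$. The first step is the combinatorial/diagonal estimate: $\mu_t^{\otimes n}(f)$ sums $f$ over all $n$-tuples of indices whereas $U_t^n(f)$ sums only over pairwise distinct tuples, and the number of $n$-tuples with at least one repeated coordinate is at most $c_n |X_t|^{n-1}$, so for bounded $f$ one has
\[
\bigl| \mu_t^{\otimes n}(f) - |X_t|^{-n} U_t^n(f) \bigr| \le c_n \|f\|_\infty / |X_t|.
\]
By Fact~\ref{fact:aaa}, $|X_t| = e^{\lambda_p t} V_t \to +\infty$ almost surely on $Ext^c$ (since $V_\infty > 0$ there), so this error vanishes a.s. It therefore suffices to prove that $\mu_t^{\otimes n}(f) \to \langle f, \varphi^{\otimes n}\rangle$ in the appropriate mode of convergence.

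For bounded continuous $f$, I would first verify the claim for tensor products $f = g_1 \otimes \cdots \otimes g_n$ with $g_k$ bounded continuous on $\R^d$: \eqref{eq:basicLLN} applied to each $g_k$ (with a common null set) yields $\mu_t^{\otimes n}(f) = \prod_k \mu_t(g_k) \to \prod_k \langle\varphi, g_k\rangle = \langle \varphi^{\otimes n}, f\rangle$ a.s.\ on $Ext^c$. Since finite linear combinations of such tensor products are dense in $C_b(\R^{nd})$ for uniform convergence on compact sets, and since $\mu_t$ is tight on $Ext^c$ (an easy consequence of applying \eqref{eq:basicLLN} to appropriate bump functions), the conclusion propagates to any bounded continuous $f$, completing the a.s. half of the theorem.

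For $f \in \pspace{}(\R^{nd})$, where only convergence in probability is claimed, the same scheme is applied but with quantitative moment controls replacing sup-norm bounds. I would again start with tensor products $f = g_1\otimes\cdots\otimes g_n$, using the single-particle LLN in probability on $\pspace{}$ (recalled in Section~\ref{sec:k_equal_one}) for each $g_k$. A direct computation based on the many-to-one formula for the branching OU system then yields $\mathbb{E}\bigl[|X_t|^{-n} |U_t^n(f) - |X_t|^n \mu_t^{\otimes n}(f)|\bigr] \to 0$, handling the diagonal corrections without recourse to a sup-norm. The extension from tensor products to arbitrary $f \in \pspace{}(\R^{nd})$ proceeds by a truncation/approximation argument: split $f = f\eta_N + f(1 - \eta_N)$ with a smooth compactly supported cut-off $\eta_N$, control the first summand via the bounded case, and estimate the second via a uniform-in-$t$ bound on $\mathbb{E}\bigl[|X_t|^{-n} U_t^n( W_N )\bigr]$ where $W_N$ is a fixed polynomial dominating $|f|$ outside the cut-off region.

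The bounded case is essentially soft once the diagonal estimate is in place. The genuine obstacle is the last step: producing a moment bound uniform in $t$ for $|X_t|^{-n} U_t^n$ acting on polynomial weights. Via the many-to-one formula such a bound reduces to estimating expectations of products of evaluations of the Ornstein-Uhlenbeck semigroup at (multiple) genealogical meeting points of the branching tree, and its magnitude depends delicately on the relation between $\lambda_p$ and $\mu$ --- precisely the phenomenon responsible for the three regimes appearing in the companion CLT. Once this moment bound is in hand, the truncation argument closes the proof.
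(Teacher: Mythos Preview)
Your treatment of the bounded case is essentially the paper's argument. The paper packages the step from tensor products to general bounded continuous $f$ as a single citation: since $\mu_t \Rightarrow \varphi$ almost surely on $Ext^c$ (the $n=1$ LLN), Billingsley's product-measure theorem gives $\mu_t^{\otimes n} \Rightarrow \varphi^{\otimes n}$ almost surely, which is exactly $\langle f,\mu_t^{\otimes n}\rangle \to \langle f,\varphi^{\otimes n}\rangle$ for every bounded continuous $f$. Your tightness-plus-density argument is a hands-on proof of that same fact, so there is no real difference here; the paper's version is just shorter.

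For the $\pspace{}$ case, the paper takes a genuinely different route. Rather than pursuing the direct truncation/moment-bound program you sketch, the paper simply observes that the CLTs for $U$-statistics (Theorems \ref{thm:ustatistics-slow}, \ref{thm:ustatistics-critical}, \ref{thm:ustatistics-fast}), together with the corollaries for non-degenerate kernels in Section~\ref{sec:last}, already imply convergence in probability of $|X_t|^{-n}U_t^n(f)$ for any $f\in\pspace{}(\R^{nd})$; since those CLTs are proved without using the LLN for unbounded kernels, there is no circularity. This sidesteps precisely the obstacle you identify --- the uniform-in-$t$ moment bound for $|X_t|^{-n}U_t^n$ on polynomial weights --- because the relevant $L^2$ bounds (Facts \ref{fact:approximation}--\ref{fact:approximation_supercritical} and their corollaries) are only established for \emph{canonical} kernels, not for the raw polynomial majorants your truncation argument would need. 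Your direct approach is plausible in spirit, and something like it could probably be made to work by routing through the Hoeffding decomposition and the canonical-kernel bounds already in the paper, but as written it leaves exactly the hard step open. The paper's shortcut is cleaner: once the CLT machinery is built, the LLN in probability comes for free.
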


Having formulated the law of large numbers let us now pass to the corresponding CLTs. As already mentioned in the introduction, their form depends on the relation between $\lambda_p$ and $\mu$, more specifically we distinguish three cases $\lambda_p < 2\mu$, $\lambda_p = 2\mu$ and $\lambda_p > 2\mu$. We refer the reader to \cite{Adamczak:2011kx} for a detailed discussion of this phenomenon as well as its heuristic explanation and interpretation. Here we only stress that the situation for $\lambda_p > 2\mu$ differs substantially from the remaining two cases, as we obtain convergence in probability and the limit is not Gaussian even for $n=1$ (intuitively, this is caused by large branching intensity which lets local correlations between particles prevail over the ergodic properties of the Orstein-Uhlenbeck process).

\subsection{Slow branching case: $\lambda_p < 2\mu$}\label{sec:slowBranching}
Let $Z$ be a Gaussian stochastic measure on $\R^{d+1}$ with intensity $\mu_1(\dd{t} \dd{x} ):=\rbr{\delta_0(\dd{t}) + 2\lambda p e^{\lambda_p t} \dd{t} } \eq(x) \dd{x} $ defined according to \cite[Definition 7.17]{Janson:1997fk}. We denote the stochastic integral with respect to $Z$ by $I$ and the corresponding multiple stochastic integral by $I_n$ \cite[Section 7.2]{Janson:1997fk}. We assume that $Z$ is defined on the probability space $(\Omega, \mathcal{F}, \mathbb{P})$.
For $f\in \pspace{}(\R^{nd})$ we define (we recall that $\T{}$ is the semigroup of the Orstein-Uhlenbeck process)
\begin{equation}
        H(f)(s_1, x_1, s_2,x_2, \ldots, s_n, x_n) := \rbr{\otimes_{i=1}^n \T{s_i}} f(x_1, x_2, \ldots, x_n), \quad s_i \in \R_+,\: x_i\in\R^d. \label{eq:defH}
\end{equation}
It will be useful to treat this function as a function of $n$ variables of type $z_i:=(s_i, x_i) \in \R_+ \times \Rd$. For a Feynman diagram $\gamma$ labeled by $\cbr{1,2, \ldots,n }$
\begin{equation}
    L(f, \gamma ) :=  I_{ |A_\gamma|}\rbr{ \rbr{\prod_{(j,k)\in E_\gamma} {\int \mu_2({\dd{z_{j,k}}}) }} H(f)( u_1, u_2, \ldots, u_n )  },  \quad \mu_2(\dd{t} \dd{x}  ) := 2\lambda p e^{\lambda_p t} \eq(x) \dd{t} \dd{x}, \label{eq:mu2}
\end{equation}
where $u_i = z_{j,k}$ if $(j,k)\in E_\gamma$ and  $i=j$ or $i=k$ and $u_i = z_i $ if $i \in A_\gamma$. Less formally, for each pair $(j,k)$ we integrate over diagonal of coordinates $j$ and $k$ with respect to $\mu_2$. The function obtained in this way is integrated using the multiple stochastic integral $I_{|A_\gamma|}$. We define
\begin{equation}
    L_1(f) := \sum_{\gamma}  L(f, \gamma), \label{eq:limitVaribale}
\end{equation}
where the sum spans over all Feynman diagrams labeled by $\cbr{1, 2, \ldots, n}$.
\begin{fact} \label{fact:well-posed}
    For any canonical $f \in \pspace{}(\R^{nd})$ we have $\ev{L_1(f)}^2 <+\infty$. Moreover  $L_1$ is a continuous function $L_1:\text{Can} \mapsto L_2(\Omega, \mathcal{F}, \mathbb{P})$, where $\text{Can}=\cbr{f \in \pspace{}(\R^{nd}):f\: \text{is a canonical kernel}}$ and $\text{Can}$ is endowed with the norm $\norm{\cdot}{\pspace{}}{}$.
\end{fact}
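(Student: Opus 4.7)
The plan is to establish a linear bound $\norm{L_1(f)}{L^2(\mathbb{P})}{} \leq C_n \norm{f}{\pspace{}}{}$ on the canonical subspace; by linearity this gives simultaneously the finiteness of the second moment and the $L^2$-continuity of $L_1$.

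Since $L_1(f) = \sum_\gamma L(f, \gamma)$ is a finite sum over Feynman diagrams labelled by $\{1, \ldots, n\}$, by the triangle inequality in $L^2$ it suffices to bound $\norm{L(f, \gamma)}{L^2}{}$ for each $\gamma$ separately. Each term is a multiple Wiener-Itô integral $I_{|A_\gamma|}(g_\gamma)$, where $g_\gamma$ is obtained from $H(f)$ by integrating out the edge coordinates on the corresponding diagonals against $\mu_2$. The standard upper bound for Wiener chaos integrals yields
\[
\norm{L(f, \gamma)}{L^2}{2} \leq |A_\gamma|! \cdot \norm{g_\gamma}{L^2(\mu_1^{\otimes |A_\gamma|})}{2},
\]
so the task reduces to a deterministic $L^2$ estimate.

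The natural tool is the Hermite polynomial basis of $L^2(\eq^{\otimes n})$. Writing $f = \sum_\alpha c_\alpha H_\alpha$ with $\alpha = (\alpha_1, \ldots, \alpha_n)$ and $H_\alpha(x_1, \ldots, x_n) = \prod_i H_{\alpha_i}(x_i)$, canonicality amounts to $c_\alpha = 0$ whenever $|\alpha_i| = 0$ for some $i$. Since $\T{s} H_{\alpha_i} = e^{-\mu|\alpha_i| s} H_{\alpha_i}$, we obtain $H(f) = \sum_\alpha c_\alpha e^{-\mu \sum_i |\alpha_i| s_i} H_\alpha$. For each edge $(j,k) \in E_\gamma$, the integration against $\mu_2$ on the diagonal $x_j = x_k = x$, $s_j = s_k = s$ factors into a spatial orthogonality contribution $\int H_{\alpha_j}(x) H_{\alpha_k}(x) \eq(x) dx = \delta_{\alpha_j, \alpha_k}$ times a temporal contribution $\int_0^\infty 2\lambda p \, e^{(\lambda_p - 2\mu|\alpha_j|) s} ds = 2\lambda p / (2\mu|\alpha_j| - \lambda_p)$, the latter finite and uniformly bounded in $\alpha$ precisely because $\lambda_p < 2\mu$ and $|\alpha_j| \geq 1$. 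Analogously, the $L^2(\mu_1)$-integration at each unpaired vertex splits into a Dirac contribution controlled by $\norm{f}{L^2(\eq^{\otimes n})}{2}$ and a $\mu_2$ contribution governed by integrals of the same form $\int_0^\infty e^{(\lambda_p - 2\mu|\alpha_i|) s_i} ds_i$, again finite by $\lambda_p < 2\mu$ and canonicality.

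Combining these bounds with Parseval's identity $\sum_\alpha c_\alpha^2 = \norm{f}{L^2(\eq^{\otimes n})}{2}$ and summing over the finite family of diagrams yields
\[
\norm{L_1(f)}{L^2}{} \leq C_n \norm{f}{L^2(\eq^{\otimes n})}{} \leq C'_n \norm{f}{\pspace{}}{},
\]
the second inequality holding because the Gaussian tails of $\eq^{\otimes n}$ dominate the exponential weight $n(x)^{-1}$ entering the definition of $\norm{\cdot}{\pspace{}}{}$. The main obstacle is the bookkeeping: one must track which coordinates remain canonical after successive edge integrations, and justify the Wiener-Itô bound for the unsymmetrised integrand $g_\gamma$ (either via explicit symmetrisation or via the standard non-symmetric upper bound used above). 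A semigroup alternative to the Hermite expansion, which I view as a useful sanity check, is to exploit the $\eq$-reversibility identity $\int \eq(x) p_s(x, y_1) p_s(x, y_2) dx = \eq(y_1) p_{2s}(y_1, y_2)$ to collapse each edge into a single semigroup at doubled time, then apply the spectral gap estimate $\norm{\T{2s} g}{L^2(\eq)}{} \leq e^{-2\mu s} \norm{g}{L^2(\eq)}{}$ for canonical $g$.
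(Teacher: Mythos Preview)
Your reduction to a bound on $\|g_\gamma\|_{L^2(\mu_1^{\otimes|A_\gamma|})}$ via the Wiener--It\^o isometry is fine, and the Hermite eigenfunction idea is natural. The genuine gap is the intermediate inequality you aim for,
\[
\|L_1(f)\|_{L^2(\mathbb{P})}\le C_n\,\|f\|_{L^2(\eq^{\otimes n})},
\]
which is \emph{false} for $d\ge 2$ and $n\ge 2$. The edge integration is a diagonal trace, and although each Hermite mode contributes a bounded multiplier $D_\epsilon=\frac{2\lambda p}{2\mu|\epsilon|-\lambda_p}$, after collapsing the edge indices you need $\sum_{|\epsilon|\ge 1}D_\epsilon^2<\infty$ to get an $\ell^2\!\to\!\ell^2$ bound; but
\[
\sum_{|\epsilon|\ge 1}D_\epsilon^2=\sum_{m\ge 1}\binom{m+d-1}{d-1}\Big(\frac{2\lambda p}{2\mu m-\lambda_p}\Big)^2\ \asymp\ \sum_{m\ge 1} m^{\,d-3},
\]
which diverges for $d\ge 2$. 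Concretely, for $n=2$ and the single-edge diagram $\gamma_1$, the polynomials
\[
f_N=\sum_{1\le|\epsilon|\le N} D_\epsilon\,H_\epsilon\otimes H_\epsilon\ \in\ \pspace{}(\R^{2d})\cap\text{Can}
\]
satisfy $L(f_N,\gamma_1)=\sum_{1\le|\epsilon|\le N}D_\epsilon^2=\|f_N\|_{L^2(\eq^{\otimes 2})}^2$, hence $|L(f_N,\gamma_1)|/\|f_N\|_{L^2(\eq^{\otimes 2})}\to\infty$. Since $L(f,\gamma_1)$ lives in the zeroth chaos and $L(f,\gamma_0)$ in the second, there is no cancellation: $\|L_1(f_N)\|_2\ge |L(f_N,\gamma_1)|$. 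Your ``semigroup alternative'' runs into the same wall: the Hilbert--Schmidt norm of $\T{2s}$ behaves like $s^{-d/2}$ near $s=0$, so the time integral diverges before the spectral-gap factor $e^{-2\mu s}$ can help.

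The paper therefore does \emph{not} pass through $L^2(\eq^{\otimes n})$. It works directly with $\|f\|_{\pspace{}}$: using the smoothing $\hat f_I$ (applying $\T{1}$ in the coordinates with $s_i\ge 1$) together with a multivariate telescoping/mean-value identity, it obtains the pointwise bound
\[
|H(f)(z_1,\ldots,z_n)|\ \lesssim\ \|f\|_{\pspace{}}\,e^{2\sum_i|x_i|}\prod_{i:\,s_i\ge 1} e^{-\mu s_i},
\]
and then integrates against $\mu_1,\mu_2$. The exponential spatial weight is absorbed by the Gaussian $\eq$, and the factors $e^{-\mu s_i}$ (arising from canonicality via the mean-value step, not from a spectral decomposition) make all time integrals converge under $\lambda_p<2\mu$. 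In short, the $\pspace{}$ norm is genuinely needed; your Parseval step is the place where the argument fails.
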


We are now ready to formulate our main result for processes with the small branching rate.

\begin{theorem}\label{thm:ustatistics-slow} Let $\cbr{X_t}_{t\geq 0}$ be the OU branching system starting from $x\in\Rd$. Let us assume that $f\in \pspace{}(\R^{nd})$ is a canonical kernel and $\lambda_p<2\mu$. Then conditionally on the set of non-extinction $Ext^c$ there is the convergence
    \begin{equation}
        \rbr{e^{-\lambda_p t}|X_t| , \frac{|X_t| - e^{t \lambda_p} V_\infty}{\sqrt{|X_t|}}, \frac{U^n_t(f)}{|X_t|^{n/2}} } \rightarrow^d \rbr{W, G_1 ,L_1(f)}, \label{eq:result}
    \end{equation}
    where where $G_1\sim \mathcal{N}(0, 1/(2p-1))$ and $W, G_1, L_1(f)$ are independent random variables.
\end{theorem}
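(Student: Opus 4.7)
The plan is to prove the joint convergence \eqref{eq:result} by the method of moments. By Fact~\ref{fact:well-posed}, $L_1(f)$ belongs to $L^2(\Omega,\mathcal{F},\p)$ and is a polynomial in Gaussian stochastic integrals, so its law is moment-determined. The joint convergence of the first two coordinates to $(W, G_1)$ together with the independence $W \perp G_1$ is already supplied by Section~\ref{sec:k_equal_one}, so the main task is to identify the joint moments of $U^n_t(f)/|X_t|^{n/2}$ with powers of $e^{-\lambda_p t}|X_t|$ and $(|X_t| - e^{t \lambda_p}V_\infty)/|X_t|^{1/2}$. First I would reduce to smooth tensor-product canonical kernels $f = g_1 \otimes \cdots \otimes g_n$, using the density and the continuity of $L_1$ provided by Fact~\ref{fact:well-posed}.

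The core of the argument is a genealogical decomposition: for any $n$-tuple of distinct particles at time $t$, consider the partition of $\{1, \ldots, n\}$ induced by their most recent common ancestors (MRCAs). I would split $U^n_t(f)$ accordingly and compute moments via a many-to-few formula obtained by conditioning on the branching tree. For a generic $n$-tuple in the large-$t$ limit each pair of indices either shares an MRCA very near time $0$ (macroscopically separated particles) or very near time $t$ (recent branching). Under $\lambda_p < 2\mu$, the semigroup $\T{s}$ thermalises macroscopically-separated coordinates against $\eq$, and the canonical assumption forces the corresponding contributions to vanish to leading order. The surviving contributions are indexed exactly by pairings on $\{1, \ldots, n\}$, i.e.\ Feynman diagrams $\gamma$: an edge $(j,k) \in E_\gamma$ records a recent MRCA at time $t-s$, whose averaging against the expected density of binary branching events yields the factor $2\lambda p e^{\lambda_p s}\eq(x)\,\ds\,\dx$ (the measure $\mu_2$ of \eqref{eq:mu2}); each unpaired index $i \in A_\gamma$ corresponds to an ancestor lineage, giving either the $\delta_0(\dt)\,\eq(x)\,\dx$ component of $\mu_1$ (lineage reaching time $0$) or the $2\lambda p e^{\lambda_p t}\,\eq(x)\,\dt\,\dx$ component (non-shared intermediate branching). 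Combining these contributions line by line, via the $n=1$ CLT from Section~\ref{sec:k_equal_one} applied to each unpaired lineage, produces the multiple Gaussian stochastic integral $I_{|A_\gamma|}$ against $Z$ and yields the summands $L(f,\gamma)$ of \eqref{eq:limitVaribale}.

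The principal technical obstacle will be the moment/combinatorial bookkeeping: showing that only pairings (no triples or higher coincidences) contribute to the leading order $|X_t|^{kn/2}$ when computing the $k$-th moment, and that the combinatorial coefficient matches exactly the definition \eqref{eq:limitVaribale}. Here the assumption $\lambda_p < 2\mu$ is crucial, as it enforces an integrability bound of the form $\int_{0}^{+\infty} e^{\lambda_p s}\norm{\T{s} g}{\pspace{}}{2}\,\ds < +\infty$ for canonical $g$, which makes the sum over pairings absolutely convergent and suppresses higher-order coincidences. For the independence claim, I would include all three coordinates in the same moment computation; the almost sure limit $e^{-\lambda_p t}|X_t| \to W$ allows one to treat it as a multiplicative factor via Slutsky, whereas the asymptotic independence of $G_1$ and $L_1(f)$ follows from a mode-orthogonality argument: $G_1$ arises from the $n=1$ pairing against the constant function $\mathbf{1}$, which is $\eq$-orthogonal to the canonical kernel $f$, so that the corresponding multiple stochastic integrals lie in orthogonal Wiener chaoses and are jointly Gaussian-independent.
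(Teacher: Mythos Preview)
Your approach differs substantially from the paper's and has several gaps. The paper does not use the method of moments. For tensor-product canonical kernels $f=\sum_l\bigotimes_i f_i^l$ it applies the purely combinatorial inclusion--exclusion \eqref{eq:inex} to write $U^n_t(f)=\sum_\gamma(-1)^{r(\gamma)}S(\gamma)+R$, where $\gamma$ ranges over Feynman diagrams (partitions of $\{1,\ldots,n\}$ into singletons and pairs) and $R$ collects partitions containing a block of size $\ge3$. Each $S(\gamma)$ then \emph{factorises} over the diagram: an edge $(j,k)$ contributes $|X_t|^{-1}\langle X_t,f_j^lf_k^l\rangle\to\langle f_j^lf_k^l,\eq\rangle$ a.s.\ by the LLN (Theorem~\ref{thm:LLN}), and each singleton $r$ contributes $|X_t|^{-1/2}\langle X_t,f_r^l\rangle$, which converges jointly with the first two coordinates of \eqref{eq:result} by the $n=1$ CLT (Theorem~\ref{thm:clt1}) and Cram\'er--Wold. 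The limit is matched to $L_1(f)$ by a Wick-type algebraic expansion. Extension to general $f\in\pspace{}$ uses the $L^2$ and $L^0$ approximation bounds (Fact~\ref{fact:approximation}, Fact~\ref{fact:tmp19_crit}); the genealogical ``bookkeeping of trees'' enters \emph{only} to establish these bounds, never to identify the limit.

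In your proposal the Feynman diagrams are supposed to emerge from the MRCA structure of $n$ distinct particles, but the reduced genealogy of $n$ distinct leaves is a binary tree, not a pairing, and you give no mechanism for collapsing it to a diagram (in the paper the diagrams come from index coincidences $i_j=i_k$, not from genealogy). You also mix two incompatible modes of argument: a method-of-moments proof cannot ``apply the $n=1$ CLT to each unpaired lineage'', since a CLT supplies convergence in law, not of moments. Finally, the independence $G_1\perp L_1(f)$ cannot be obtained from ``orthogonal Wiener chaoses'': $G_1$ is the fluctuation of the total mass $|X_t|$ and is not built from the spatial Gaussian measure $Z$ that defines $L_1(f)$. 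In the paper this independence is inherited directly from the joint $n=1$ statement of Theorem~\ref{thm:clt1}.
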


\subsection{Critical branching case: $\lambda_p = 2\mu$}
Consider the space $\mathcal{L} := L_2(\R^d,\Phi(dx))$, where
\begin{displaymath}
\Phi(x) := \sum_{l=1}^d \Big|\frac{\partial \varphi(x)}{\partial x_l}\Big|
\end{displaymath}
and a centred Gaussian process $(G_f)_{f \in \mathcal{L}}$ defined on some probability space $(\Omega,\mathcal{F},\p)$, with the covariance structure given by
\begin{align}
\cov(G_{f_1},G_{f_2}) = \frac{\lambda p\sigma^2}{\mu}\sum_{l=1}^d \Big\langle f_1,\frac{\partial \varphi}{\partial x_l}\Big\rangle\Big\langle f_2,\frac{\partial \varphi}{\partial x_l}\Big\rangle.\label{eq:criticalCovariances}
\end{align}

We will identify the process with a map $I\colon \mathcal{L} \to L_2(\Omega,\mathcal{F},\p)$, such that $I(f) = G_f$. One can easily check that $I$ is a bounded linear operator.

To formulate the central limit theorem in this case we will need the following
\begin{fact}
\label{fact:well-posed-critical}
For every $n\ge 1$ there exists a unique bounded linear operator $L_2:\text{Can} \mapsto L_2(\Omega, \mathcal{F}, \mathbb{P})$, where $\text{Can}=\cbr{f \in \pspace{}(\R^{nd}):f\: \text{is a canonical kernel}}$ endowed with the norm $\norm{\cdot}{\pspace{}}{}$, such that for every $f_1,\ldots,f_n \colon \R^d\to \R$ satisfying $\langle f_i,\varphi \rangle = 0$ and $f_i \in \pspace{}(\R^d)$, $i=1,\ldots,n$,
\begin{align}\label{eq:tensorization}
L_2(f_1\otimes\cdots\otimes f_n) = I(f_1)\cdots I(f_n).
\end{align}
\end{fact}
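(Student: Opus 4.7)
The plan is to reduce the construction of $L_2$ to a finite-dimensional calculation by exploiting the degeneracy of the Gaussian process $(G_f)_{f\in\mathcal{L}}$. Since the covariance in \eqref{eq:criticalCovariances} has rank $d$, the image of $I$ lies in the $d$-dimensional Gaussian subspace $\mathrm{span}\{\xi_l:=I(e_l):l=1,\ldots,d\}$, where $e_l:=(\partial\eq/\partial x_l)/\norm{\partial\eq/\partial x_l}{L^2}{2}$. Using the $L^2(\dx)$-orthogonality of the derivatives $\partial\eq/\partial x_l$ (a consequence of the parity of $\eq^2$), one checks that $\cov(\xi_l,\xi_m)=\tfrac{\lambda p\sigma^2}{\mu}\delta_{lm}$ and that $I(f)=\sum_{l=1}^d\ddp{f}{\partial\eq/\partial x_l}\xi_l$ for every $f\in\pspace{}(\R^d)$.

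Motivated by this, I would define for $f\in\pspace{}(\R^{nd})$
\begin{equation*}
L_2(f):=\sum_{l_1,\ldots,l_n=1}^{d}\ddp{f}{\bigotimes_{i=1}^n\frac{\partial\eq}{\partial x_{l_i}}}\prod_{i=1}^n\xi_{l_i},
\end{equation*}
and set the operator in the statement to be its restriction to $\text{Can}$. Boundedness in $\norm{\cdot}{\pspace{}}{}$ follows because each of the $d^n$ scalar coefficients is a bounded linear functional: the pointwise estimate $|f(x_1,\ldots,x_n)|\le\norm{f}{\pspace{}}{}\prod_{j,k}e^{|x_{j,k}|}$ is integrable against the Gaussian-decaying product $\bigotimes_i\partial\eq/\partial x_{l_i}$. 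Plugging in a simple tensor $f=f_1\otimes\cdots\otimes f_n$ with $\ddp{f_i}{\eq}=0$, multilinearity collapses the sum to $\prod_i I(f_i)$, which is exactly the extension property \eqref{eq:tensorization}.

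Uniqueness reduces to showing that canonical simple tensors are $\norm{\cdot}{\pspace{}}{}$-dense in $\text{Can}$. Introduce the operators $M_k g(x_1,\ldots,x_n):=\int_{\R^d}g(x_1,\ldots,x_n)\eq(x_k)\dd{x_k}$; they commute and are idempotent, so $\Pi:=\prod_{k=1}^n(\mathrm{Id}-M_k)$ is a bounded projection of $\pspace{}(\R^{nd})$ onto $\text{Can}$. For $f\in\text{Can}$, I would first approximate $f$ in $\norm{\cdot}{\pspace{}}{}$ by compactly supported continuous functions (since $n(x)f(x)\in C_0(\R^{nd})$), and then approximate these by finite sums of simple tensors of $\cspace{}(\R^d)$-functions via Stone--Weierstrass on a sufficiently large cube. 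Applying $\Pi$ to each simple tensor $g_1\otimes\cdots\otimes g_n$ produces the canonical simple tensor $\bigotimes_{k=1}^n(g_k-\ddp{g_k}{\eq})$; since $\Pi$ is continuous and $\Pi f=f$, the resulting canonical approximants converge to $f$, yielding density and hence uniqueness.

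The step I expect to be most delicate is the boundedness of $M_k$ (and hence $\Pi$) on $\pspace{}(\R^{nd})$: one needs a Fubini-type estimate $\sup_x n(x)|M_k g(x)|\le C\norm{g}{\pspace{}}{}$, in which the Gaussian weight $\eq(x_k)$ must absorb the exponential factor $\prod_l e^{|x_{k,l}|}$ coming from $|g(x)|\le\norm{g}{\pspace{}}{}/n(x)$. No fundamentally new idea is required, but the weighted estimates have to be organized carefully to make the density argument rigorous.
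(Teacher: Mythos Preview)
Your argument is correct and takes a genuinely different route from the paper. The paper proves Fact~\ref{fact:well-posed-critical} jointly with Theorem~\ref{thm:ustatistics-critical}: it first establishes the weak limit $(te^{\lambda_p t})^{-n/2}V_t^n(f)\rightarrow^d W^{n/2}L_2(f)$ for $f\in A\cap\text{Can}$, then invokes the uniform $L_2$-bound on the normalised $V$-statistics (Fact~\ref{fact:approximation_critical}) to control $\|W^{n/2}L_2(f)\|_1$, and finally appeals to a Paley--Zygmund inequality for Gaussian polynomials to convert this into $\|L_2(f)\|_2\le C\|f\|_{\pspace{}}$. The density step (your $\Pi$-projection argument) is the paper's Lemma~\ref{lem:approx2}, proved there in essentially the same way.

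Your approach bypasses the branching process entirely by exploiting the rank-$d$ degeneracy of the covariance~\eqref{eq:criticalCovariances}: the explicit formula $L_2(f)=\sum_{l_1,\ldots,l_n}\langle f,\bigotimes_i\partial\eq/\partial x_{l_i}\rangle\prod_i\xi_{l_i}$ reduces boundedness to a transparent Gaussian-integrability estimate and makes the tensorisation property immediate. This is more elementary and self-contained, and it yields a closed formula for $L_2(f)$ that the paper does not make explicit (though it is implicit in the form of $L_3$ in the supercritical case). The paper's indirect route has the compensating advantage that it reuses machinery already needed for the CLT itself, so no separate argument is required; your proof, by contrast, cleanly decouples the well-posedness of the limit from the convergence question.
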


The above lemma will be proved together with the following theorem, which describes the asymptotic behaviour of $U$-statistics in the critical case.

\begin{theorem}\label{thm:ustatistics-critical}
Let $\cbr{X_t}_{t\geq 0}$ be the OU branching system starting from $x\in\Rd$. Let us assume that $f\in \pspace{}(\R^{nd})$ is a canonical kernel and $\lambda_p=2\mu$. Then conditionally on the set of non-extinction $Ext^c$ there is the convergence
    \begin{align*}
        \rbr{e^{-\lambda_p t}|X_t| , \frac{|X_t| - e^{t \lambda_p} V_\infty}{\sqrt{|X_t|}}, \frac{U^n_t(f)}{(t|X_t|)^{n/2}} } \rightarrow^d \rbr{W, G ,L_2(f)},
    \end{align*}
where $G_1\sim \mathcal{N}(0, 1/(2p-1))$ and $W, G, L_2(f)$ are independent random variables.
\end{theorem}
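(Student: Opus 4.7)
The plan is to derive Theorem~\ref{thm:ustatistics-critical} and Fact~\ref{fact:well-posed-critical} simultaneously by reducing the $n$-th order case to the $n=1$ critical CLT from \cite{Adamczak:2011kx}, in the spirit of the slow-branching argument but with the critical normalization $(t|X_t|)^{n/2}$. The starting observation is that for a product kernel $f=f_1\otimes\cdots\otimes f_n$ with $f_i\in\pspace{}(\R^d)$ and $\langle f_i,\varphi\rangle=0$, one has
\[
U^n_t(f)=\prod_{k=1}^n \ddp{X_t}{f_k}-R_t(f),
\]
where $R_t(f)$ collects the tuples with at least one pair of coinciding indices. The joint critical CLT recalled in Section~\ref{sec:k_equal_one} yields, conditionally on $Ext^c$,
\[
\Big(e^{-\lambda_p t}|X_t|,\tfrac{|X_t|-e^{t\lambda_p}V_\infty}{\sqrt{|X_t|}},\tfrac{\ddp{X_t}{f_1}}{\sqrt{t|X_t|}},\ldots,\tfrac{\ddp{X_t}{f_n}}{\sqrt{t|X_t|}}\Big)\rightarrow^d(W,G,I(f_1),\ldots,I(f_n)),
\]
with $(W,G)$ independent of the Gaussian vector $(I(f_k))_{k=1}^n$. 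The continuous mapping theorem then produces $\prod_k I(f_k)$ as the limit of the main term. Each $m$-fold diagonal in $R_t(f)$, $m<n$, contributes at most $O(|X_t|^m)$, hence $R_t(f)/(t|X_t|)^{n/2}\to 0$ in probability once $n\ge 2$ (moments of $|X_t|$ controlled via Fact~\ref{fact:aaa} and the semigroup $\T{s}$).

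This forces the definition $L_2(f_1\otimes\cdots\otimes f_n):=I(f_1)\cdots I(f_n)$, extended by linearity to the span of canonical simple tensors. To extend $L_2$ to all of $\mathrm{Can}$ I would combine (\ref{eq:criticalCovariances}) with the Gaussian Hölder inequality (all moments of a Gaussian being comparable to its $L_2$-norm) to obtain the bound $\|L_2(g)\|_{L_2(\Omega)}\le C\|g\|_{\pspace{}}$ on this dense subspace; by density of finite linear combinations of canonical simple tensors in $\mathrm{Can}$ under the $\pspace{}$-norm, the map extends uniquely to a bounded linear operator, which settles Fact~\ref{fact:well-posed-critical} and also verifies the tensorization identity (\ref{eq:tensorization}).

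For a general canonical $f\in\pspace{}(\R^{nd})$ one approximates $f$ in the $\pspace{}$-norm by canonical simple tensor combinations $f^{(k)}$. By the previous paragraph $L_2(f^{(k)})\to L_2(f)$ in $L_2(\Omega)$ and the CLT has already been established for each $f^{(k)}$; a standard three-$\varepsilon$ argument then transfers the convergence in distribution to $f$, provided one has the uniform second-moment estimate
\[
\ev\Big(\frac{U^n_t(g)}{(t|X_t|)^{n/2}}\mathbf{1}_{Ext^c}\Big)^2\le C\|g\|_{\pspace{}}^2
\]
valid for every canonical $g\in\pspace{}(\R^{nd})$ and all sufficiently large $t$. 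This uniform bound is the main obstacle. Its proof requires expanding $\ev U^n_t(g)^2$ over pairs of $n$-tuples of particles, classifying the terms by the combinatorial pattern of coincidences (an analogue of the Feynman diagrams used to build $L_1$), and handling the resulting $2n$-fold sums with the many-to-one branching formula. Canonicity of $g$ kills the "unpaired" diagrams via $\langle\cdot,\varphi\rangle=0$, while the asymptotics of $\T{s}g$ in the critical regime $\lambda_p=2\mu$ show that only fully-paired diagrams contribute at the target order $(te^{\lambda_p t})^n$, all other patterns being of strictly lower order; this parallels and is more delicate than the analysis behind Fact~\ref{fact:well-posed}, and it is where the bulk of the technical work lies.
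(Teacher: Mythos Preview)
Your overall strategy—prove the CLT for linear combinations of canonical tensors via the $n=1$ critical CLT and the continuous mapping theorem, then approximate—matches the paper. The treatment of the remainder $R_t(f)$ and of the off-diagonal Feynman terms is also in the same spirit.

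The genuine gap is in the approximation step. You propose the uniform bound
\[
\ev\Big(\frac{U^n_t(g)}{(t|X_t|)^{n/2}}\mathbf{1}_{Ext^c}\Big)^2\le C\|g\|_{\pspace{}}^2,
\]
but the method you describe (expanding $\ev U_t^n(g)^2$ via many-to-few and classifying by coincidence patterns) computes the moment with the \emph{deterministic} normalization $(te^{\lambda_p t})^n$, not with the random $|X_t|^n$ sitting inside the expectation. These are not interchangeable: conditionally on $Ext^c$, $e^{-\lambda_p t}|X_t|\to W$ with $W$ exponential, and $\ev W^{-1}=\infty$, so there is no hope of a naive Cauchy–Schwarz passage from deterministic to random normalization. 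The paper acknowledges this obstruction explicitly and circumvents it: it proves the deterministic-normalization $L_2$ bound (Fact~\ref{fact:approximation_critical}), derives from it an $L_1$ bound only on the truncated set $\{|X_t|\ge r e^{\lambda_p t}\}$ (Corollary~\ref{cor:helper_crit}), and then controls the complement using $\sup_t\p(|X_t|e^{-\lambda_p t}<r)\to 0$ as $r\to 0$. This yields closeness in the bounded-Lipschitz metric (Fact~\ref{fact:tmp19}), which suffices for the three-$\varepsilon$ argument. Your proposal is missing this truncation device, and without it the displayed $L_2$ inequality is at best unproven and quite possibly false.

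A secondary point: your route to $\|L_2(g)\|_2\le C\|g\|_{\pspace{}}$ via ``(\ref{eq:criticalCovariances}) plus Gaussian hypercontractivity'' is too vague as stated—hypercontractivity compares $L_p$ norms of a fixed Gaussian polynomial, it does not by itself relate $\|L_2(g)\|_2$ to $\|g\|_{\pspace{}}$. A direct argument is possible here because the covariance (\ref{eq:criticalCovariances}) shows that $I$ has finite rank (it factors through the $d$ linear functionals $f\mapsto\langle f,\partial_l\varphi\rangle$), so $L_2(g)$ is a polynomial in $d$ Gaussians with coefficients $\langle g,\partial_{j_1}\varphi\otimes\cdots\otimes\partial_{j_n}\varphi\rangle$, each bounded by $C\|g\|_{\pspace{}}$. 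The paper instead goes indirectly: from Fact~\ref{fact:approximation_critical} and the already-established CLT for tensors it obtains $\|W^{n/2}L_2(f)\|_1\le C\|f\|_{\pspace{}}$, and then uses Gaussian anti-concentration (the Paley–Zygmund-type inequality (\ref{eq:tmp21})) to strip off $W^{n/2}$.
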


\subsection{Fast branching case: $\lambda_p > 2\mu$\label{sec:fast_br}}
In order to describe the limit we introduce an $\Rd$-valued process
\begin{equation}
    H_t := e^{(-\lambda_p + \mu)t} \sum_{i=1}^{|X_t|} X_t(i),\quad t\geq 0. \label{eq:martingale}
\end{equation}
The following two facts have been proved in \cite{Adamczak:2011kx}.
\begin{fact}\label{fact:martingaleConvergence}
    $H$ is a martingale with respect to the filtration of the OU-branching system starting from $x\in\Rd$. Moreover for $\lambda_p>2\mu$ we have $\sup_t \ev{H_t^2} <+\infty$, therefore there exists $H_\infty := \lim_{t\rightarrow +\infty} H_t$ (a.s. limit) and $H_\infty \in L_2$. When the  OU branching system starts from $0$ then martingales $V_t$ and $H_t$ are orthogonal.
\end{fact}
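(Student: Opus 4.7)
The statement has three parts---the martingale property of $H$, its $L^2$-boundedness in the regime $\lambda_p>2\mu$, and the orthogonality with $V$ when the system starts at $0$---and I would treat each by applying the many-to-one and many-to-two identities to the coordinate test functions $f_l(x)=x_l$, exploiting that the OU semigroup acts as $\T{u}f_l(x)=e^{-\mu u}x_l$ and $\T{u}(f_l^2)(x)=e^{-2\mu u}x_l^2+\frac{\sigma^2}{2\mu}(1-e^{-2\mu u})$.

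For the martingale property I would condition on $\mathcal{F}_s$ and use that the subtrees descending from particles alive at time $s$ are independent copies of the full system. The many-to-one lemma gives
\begin{equation*}
\mathbb{E}\left[\sum_{i=1}^{|X_t|}X_t(i)_l\,\Big|\,\mathcal{F}_s\right]=e^{\lambda_p(t-s)}\sum_{j=1}^{|X_s|}\T{t-s}f_l(X_s(j))=e^{(\lambda_p-\mu)(t-s)}\sum_{j=1}^{|X_s|}X_s(j)_l,
\end{equation*}
which, after multiplying by $e^{(-\lambda_p+\mu)t}$, collapses to $H_{s,l}$.

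For $L^2$-boundedness the main calculation is the standard second-moment formula obtained by splitting $\ddp{X_t}{f_l}^2$ into diagonal and factorial parts and invoking many-to-two with pair-creation rate $\lambda F''(1)=2\lambda p$:
\begin{equation*}
\mathbb{E}_x\ddp{X_t}{f_l}^2 = e^{\lambda_p t}\T{t}(f_l^2)(x)+\int_0^t 2\lambda p\,e^{\lambda_p s+2\lambda_p(t-s)}\,\T{s}\bigl((\T{t-s}f_l)^2\bigr)(x)\,\ds.
\end{equation*}
Substituting the two OU identities and rescaling by $e^{2(-\lambda_p+\mu)t}$, the boundary term becomes $e^{-\lambda_p t}x_l^2+\frac{\sigma^2}{2\mu}\bigl(e^{(2\mu-\lambda_p)t}-e^{-\lambda_p t}\bigr)$, and the integrand collapses to $2\lambda p\bigl[e^{-\lambda_p s}x_l^2+\frac{\sigma^2}{2\mu}e^{(2\mu-\lambda_p)s}(1-e^{-2\mu s})\bigr]$. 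Both are integrable on $[0,\infty)$ precisely because $\lambda_p>2\mu$, so summing over coordinates yields $\sup_t\mathbb{E}\|H_t\|^2<\infty$, and Doob's theorem supplies the a.s.\ and $L^2$ limit $H_\infty$.

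Finally, for orthogonality I would compute
\begin{equation*}
\mathbb{E}_0V_tH_{t,l}=e^{-\lambda_p t}e^{(-\lambda_p+\mu)t}\mathbb{E}_0\bigl[\ddp{X_t}{1}\ddp{X_t}{f_l}\bigr]
\end{equation*}
and decompose the mixed moment via many-to-two with kernels $(1,f_l)$; every summand is proportional to $\T{r}f_l(0)=0$ or to $f_l(0)=0$, so the expression vanishes. Combined with $V_0H_{0,l}=0$, the martingale property of $V$ and $H$ then gives $\mathbb{E}[V_tH_{t,l}\mid\mathcal{F}_s]=V_sH_{s,l}$, i.e.\ the two martingales are orthogonal. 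The main obstacle is really the bookkeeping in the second step: one must track how the three exponents $\lambda_p$ (branching), $\mu$ (mean reversion), and the integration in $s$ combine so that $2\mu-\lambda_p<0$ is the sharp threshold making the fast-branching regime the only one where $H_t$ converges in $L^2$ without further rescaling.
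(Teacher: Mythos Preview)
The paper does not actually prove this fact here; it is quoted from \cite{Adamczak:2011kx}, so there is no in-paper argument to compare against. Your treatment of the first two parts is correct and is the natural one: the branching/Markov property plus $\T{u}f_l(x)=e^{-\mu u}x_l$ gives the martingale property, and your second-moment computation via the many-to-two identity (which is exactly the $n=2$ case of the paper's formula \eqref{eq:trees}) is accurate, with the integrability on $[0,\infty)$ pinpointing $\lambda_p>2\mu$ as the sharp threshold.

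There is, however, a genuine gap in your orthogonality argument. The computation $\mathbb{E}_0[V_tH_{t,l}]=0$ is correct, but your final sentence---``Combined with $V_0H_{0,l}=0$, the martingale property of $V$ and $H$ then gives $\mathbb{E}[V_tH_{t,l}\mid\mathcal{F}_s]=V_sH_{s,l}$''---is a non sequitur. Knowing that two processes are martingales and that their product has constant expectation does not make the product a martingale. In fact, if you run the branching decomposition at time $s$ and feed in your own many-to-two formula started from a generic point $y$, you obtain $\mathbb{E}_y[V_uH_{u,l}]=y_l\bigl(\tfrac{2p}{2p-1}-\tfrac{1}{2p-1}e^{-\lambda_p u}\bigr)$, which depends on $u$; substituting this for each ancestor at time $s$ shows that $\mathbb{E}[V_tH_{t,l}\mid\mathcal{F}_s]\neq V_sH_{s,l}$ in general. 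So if ``orthogonal'' is read in the strong sense ($VH$ a martingale), the claim as you state it is simply false. If it is read as $L^2$-orthogonality for each fixed $t$ (equivalently $\mathbb{E}_0[V_\infty H_{\infty,l}]=0$ after passing to the limit), then your computation already establishes it and the last sentence should be deleted rather than repaired.
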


It is worthwhile to note that the distribution of $H_\infty$ depends on the starting conditions.
\begin{fact} \label{fact:law}
    Let $\cbr{X_t}_{t\geq 0}$ and $\{\tilde{X}_t\}_{t\geq 0}$ be two OU branching processes, the first one starting from $0$ and the second one from $x$. Let us denote the limit of corresponding martingales by $H_\infty,\tilde{H}_\infty$ respectively. Then
    \[
        \tilde{H}_\infty  =^d H_\infty +x V_\infty ,
    \]
    where $V_\infty$ is the limit given by \eqref{eq:defV} for the system $X$.
\end{fact}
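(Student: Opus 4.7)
The plan is to construct an explicit coupling between the two systems which shares a common branching genealogy and differs only by a deterministic shift arising from the linearity of the Ornstein-Uhlenbeck drift. The Ornstein-Uhlenbeck SDE $\dd{Y_t} = \sigma \dd{B_t} - \mu Y_t \dt$ has the mild-form solution $Y_t^x = e^{-\mu t}x + \sigma\int_0^t e^{-\mu(t-s)}\dd{B_s}$, so driven by the same Brownian motion the paths started from $x$ and from $0$ differ only by the deterministic function $e^{-\mu t}x$. This linearity propagates through branching events: if a particle branches at time $\tau$, each offspring inherits the same shift $e^{-\mu t}x$ at all subsequent times, independently of the branching time and of later Brownian increments.

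Concretely, I would realise both processes on a common probability space by first sampling the random genealogical tree (i.i.d.\ $\text{Exp}(\lambda)$ lifetimes and the binary branching mechanism $F$) and then, for each node $v$ of the tree, an independent standard Brownian motion $B^v$. For the system $\cbr{X_t}_{t\geq 0}$ starting from $0$, one solves the OU SDE along the edges of the tree, using $B^v$ as the driver on edge $v$ with initial condition equal to the mother's terminal position (the root starts at $0$). For $\{\tilde X_t\}_{t\geq 0}$ one uses exactly the same tree and the same Brownian drivers, only changing the root's initial condition to $x$. An easy induction on the generation of a particle, using the mild form of the OU SDE, then gives $\tilde X_t(i) = e^{-\mu t}x + X_t(i)$ for every $t\geq 0$ and every $i=1,\ldots,|X_t|$, with $|\tilde X_t| = |X_t|$ pathwise.

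With this coupling in hand, the identity for $\tilde H_t$ is just summation:
\begin{align*}
\tilde H_t = e^{(-\lambda_p+\mu)t}\sum_{i=1}^{|\tilde X_t|}\tilde X_t(i) = e^{(-\lambda_p+\mu)t}\sum_{i=1}^{|X_t|}\bigl(e^{-\mu t}x + X_t(i)\bigr) = e^{-\lambda_p t}|X_t|\, x + H_t = V_t\, x + H_t.
\end{align*}
Passing to the limit $t\to +\infty$, using Fact \ref{fact:aaa} for the a.s.\ convergence $V_t\to V_\infty$ and Fact \ref{fact:martingaleConvergence} for the a.s.\ convergence $H_t\to H_\infty$, we conclude that pathwise in this coupling $\tilde H_\infty = V_\infty x + H_\infty$, from which the equality in distribution follows. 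There is no substantive obstacle; the only point worth noting is that on the coupled space the martingale limit $V_\infty$ associated with $\tilde X$ and the one associated with $X$ literally coincide, since $|\tilde X_t|=|X_t|$ for every $t$, which is precisely the $V_\infty$ appearing in the statement.
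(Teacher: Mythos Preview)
Your coupling argument is correct and is the natural way to prove this. The key observation --- that two Ornstein--Uhlenbeck processes driven by the same Brownian motion but started at $0$ and $x$ differ by exactly $e^{-\mu t}x$ at every time $t$ --- propagates through the branching structure exactly as you describe, since each offspring inherits its mother's position as its initial condition. The induction on the genealogy is routine, and the passage to the limit via Fact~\ref{fact:aaa} and Fact~\ref{fact:martingaleConvergence} is legitimate.

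As for comparison with the paper: note that the paper does \emph{not} actually prove Fact~\ref{fact:law}. It is stated together with Fact~\ref{fact:martingaleConvergence} under the sentence ``The following two facts have been proved in \cite{Adamczak:2011kx}'', so the proof is deferred to the companion article. Your argument is presumably very close to what appears there, since the coupling you describe is essentially the only natural one; in any case, your write-up stands on its own.
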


$H_\infty$ is $\Rd$-valued, we denote its coordinates by $H^i_\infty$. Let $f\in \pspace{}(\R^{nd})$. We define
\[
    \tilde{L}_3(f) := \sum_{i_1,i_2,\ldots, i_n = 1}^d \ddp{\frac{\partial^n f}{\partial x_{1,i_1}\partial x_{2,i_2},\ldots,\partial x_{n,i_n}}}{\eq^{\otimes n}}H_\infty^{i_1}H^{i_2}_\infty \cdots H^{i_n}_\infty.
\]
where we adopted convention that $x_{j,l}$ is the $l$-th coordinate of the $j$-th variable. By $L_3(f)$ we will denote $\tilde{L}_3(f)$ conditioned on $Exp^c$.
\begin{theorem}\label{thm:ustatistics-fast}
    Let $\cbr{X_t}_{t\geq 0}$ be the OU branching system starting from $x\in\Rd$. Let us assume that $f\in \pspace{}(\R^{nd})$ is a canonical kernel and $\lambda_p>2\mu$. Then conditionally on the set of non-extinction $Ext^c$ there is the convergence
        \begin{equation}
            \rbr{e^{-\lambda_p t}|X_t| , \frac{|X_t| - e^{t \lambda_p} V_\infty}{\sqrt{|X_t|}}, e^{-n(\lambda_p-\mu)t } U^n_t(f) } \rightarrow^d \rbr{W, G_1 ,L_3(f)}, \label{eq:result2}
        \end{equation}
        where $G_1\sim \mathcal{N}(0, 1/(2p-1))$ and $(W,L_3(f)), G_1$ are independent. Moreover
        \begin{equation*}
            \rbr{e^{-\lambda_p t}|X_t|, e^{-n(\lambda_p-\mu)t } U^n_t(f) } \rightarrow^d \rbr{V_\infty, \tilde{L}_3(f)},\quad \text{in probability}.
        \end{equation*}
\end{theorem}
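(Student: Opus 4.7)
The plan is to bootstrap from the $n=1$ case of Theorem \ref{thm:ustatistics-fast} (stated separately in Section \ref{sec:k_equal_one}) to arbitrary $n$, by expressing $U^n_t(f)$ as a product of linear statistics plus lower-order ``diagonal'' corrections. For a tensor-product kernel $f = f_1 \otimes \cdots \otimes f_n$ with canonical factors, partitioning the $n$-tuples $(i_1,\ldots,i_n)$ according to which indices coincide gives the identity
\[
\prod_{j=1}^n \ddp{X_t}{f_j} = \sum_{\pi} U^{|\pi|}_t(\tilde f_\pi),
\]
where the sum runs over set partitions $\pi$ of $\{1,\ldots,n\}$ and $\tilde f_\pi \colon \R^{|\pi| d} \to \R$ is the kernel assigning to each block $B \in \pi$ the one-variable product $\prod_{j \in B} f_j$. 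The all-singletons partition recovers $U^n_t(f)$, so one can solve for $U^n_t(f)$ as the product minus the sum over non-trivial partitions.

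For the main term, applying the $n=1$ convergence coordinate-wise to $f_1,\ldots,f_n$ yields $e^{-(\lambda_p-\mu)t}\ddp{X_t}{f_j} \to \tilde L_3(f_j) = \sum_i \ddp{\partial f_j/\partial x_i}{\eq} H_\infty^i$ in probability; joint convergence follows because each limit is a linear functional of the common limit vector $H_\infty$. The functional $\tilde L_3$ is multiplicative on tensor products, since for $f = f_1 \otimes \cdots \otimes f_n$ the mixed derivative $\partial^n f / \partial x_{1,i_1} \cdots \partial x_{n,i_n}$ factors as $\prod_j \partial f_j/\partial x_{i_j}$ and the integral against $\eq^{\otimes n}$ factors accordingly, so $e^{-n(\lambda_p-\mu)t}\prod_j \ddp{X_t}{f_j} \to \tilde L_3(f_1\otimes\cdots\otimes f_n)$. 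For a non-trivial partition $\pi$ with $n_1$ singletons and $n_2\geq 1$ larger blocks of total size $m = n - n_1$, the kernel $\tilde f_\pi$ is canonical on the singleton coordinates but generally non-canonical on the coordinates of the larger blocks, so by the first-moment formula together with Fact \ref{fact:aaa} the summand has order $e^{(n_1(\lambda_p-\mu) + n_2\lambda_p)t}$. Elementary bookkeeping shows that this exponent is strictly less than the main-term exponent $n(\lambda_p-\mu)$ by at least $\lambda_p - 2\mu > 0$, so every non-trivial partition contributes a vanishing fraction of the main term; this is precisely where the regime $\lambda_p > 2\mu$ enters.

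A general canonical kernel $f \in \pspace{}(\R^{nd})$ is then handled by approximation: finite linear combinations of canonical tensor products are dense in the canonical subspace of $\pspace{}(\R^{nd})$ under $\norm{\cdot}{\pspace{}}{}$; the limit functional is continuous in this norm (integrating by parts shifts the derivatives onto $\eq^{\otimes n}$, bounding $|\tilde L_3(f)|$ by $\norm{f}{\pspace{}}{}$ times a polynomial in $\|H_\infty\|$); and the pre-limit functional $f \mapsto e^{-n(\lambda_p-\mu)t}U^n_t(f)$ admits a uniform-in-$t$ $L^2$ bound in terms of $\norm{f}{\pspace{}}{}$, obtained from the branching-system moment formulas already employed in \cite{Adamczak:2011kx}. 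A standard $3\varepsilon$-argument then transfers convergence in probability from the dense subclass to all canonical $f$.

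Finally, the triple convergence in \eqref{eq:result2} follows by combining the above with the $n=1$ Galton--Watson CLT from \cite{Adamczak:2011kx}. Both $V_\infty$ and $\tilde L_3(f)$ are built from the $L^2$-limits of the martingales $V_t$ and $H_t$, so the pair $(e^{-\lambda_p t}|X_t|, e^{-n(\lambda_p-\mu)t}U^n_t(f))$ converges in probability to $(V_\infty, \tilde L_3(f))$, yielding the second display after conditioning on $Ext^c$. The Gaussian coordinate $G_1$ arising from $(|X_t|-e^{t\lambda_p}V_\infty)/\sqrt{|X_t|}$ is asymptotically independent of the spatial data exactly as in the $n=1$ case. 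The main technical obstacle I expect is the uniform $L^2$ control of the partition remainders for \emph{general} (non-tensor) canonical kernels $f \in \pspace{}(\R^{nd})$: propagating the $\pspace{}$-norm through the typically non-canonical sub-kernels $\tilde f_\pi$ while preserving the sharp exponential gain $e^{-(\lambda_p-2\mu)t}$, given that the relevant derivative structure is only encoded distributionally.
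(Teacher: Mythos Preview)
Your proposal is correct and follows the same overall architecture as the paper: reduce to tensor products via the $n=1$ result (Theorem~\ref{thm:clt2}), then approximate general canonical $f\in\pspace{}(\R^{nd})$ using the density Lemma~\ref{lem:approx2} together with a uniform-in-$t$ moment bound on $e^{-n(\lambda_p-\mu)t}U_t^n(f)$. The paper carries out the latter via the tree bookkeeping of Section~\ref{sec:book_keeping}, culminating in Fact~\ref{fact:approximation_supercritical} and Corollary~\ref{cor:helper_supercrit}; this is precisely the obstacle you flag in your last paragraph, and it is indeed the technical heart of the argument.

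The one place where the paper's route is slightly cleaner is the handling of the diagonal terms for tensor kernels. You write $\prod_j\langle X_t,f_j\rangle=\sum_\pi U_t^{|\pi|}(\tilde f_\pi)$ and then need the order of each $U_t^{|\pi|}(\tilde f_\pi)$; since $U$-statistics do not factor, your stated order $e^{(n_1(\lambda_p-\mu)+n_2\lambda_p)t}$ requires either iterating the same identity on $\tilde f_\pi$ (which is again a tensor product) or a Hoeffding decomposition of $\tilde f_\pi$. The paper sidesteps this by first observing that $|X_t|/e^{2(\lambda_p-\mu)t}\to 0$ a.s.\ forces $U_t^n\sim V_t^n$ (via the inclusion--exclusion \eqref{eq:inex}, whose $V$-statistic terms \emph{do} factor for tensor products), and then $e^{-n(\lambda_p-\mu)t}V_t^n(\otimes_i f_i)=\prod_i e^{-(\lambda_p-\mu)t}\langle X_t,f_i\rangle\to\prod_i(\langle\grad f_i,\eq\rangle\circ H_\infty)$ follows immediately from the $n=1$ case. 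Either way the exponent gap you identify, $n(\lambda_p-\mu)-n_1(\lambda_p-\mu)-n_2\lambda_p\ge n_2(\lambda_p-2\mu)>0$, is the correct reason the off-diagonal terms vanish.
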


\subsection{Remarks on the CLT for $U$-statistics of i.i.d. random variables}

For comparison purposes we will now briefly recall known results on the central limit theorem for $U$-statistics of independent random variables. $U$-statistics were introduced in the 1940's in the context of unbiased estimation by Halmos \cite{Halmos:1946zr} and Hoeffding who obtained the central limit theorem for non-degenerate (degenerate of order 0) kernels \cite{Hoeffding:1948mz}. The full description of the central limit theorem was obtained in \cite{Rubin:1980gf,Dynkin:1983ve} (see also the article \cite{Filippova:1962ly} where the CLT is proven for a related class of $V$-statistics). Similarly as in our case, the asymptotic behaviour of $U$-statistics based on a function $f \colon \mathcal{X}^n\to \R$ and an i.i.d. $\mathcal{X}$-valued sequence $X_1,X_2,\ldots$ is governed by the order of degeneracy of the function $f$ (see Section \ref{sec:aux_Ustat}) with respect to the law of $X_1$ (call it $P$). The case of general $f$ can be reduced to the canonical one, for which one has the weak convergence

\begin{displaymath}
N^{-n/2}\sum_{\substack{i_1,i_2, \ldots i_n=1 \\ i_k \neq i_j\text{ if } k\neq j}}^N f(X_{i_1},\ldots,X_{i_N}) \to J_n(f),
\end{displaymath}
where $J_n$ is the $n$-fold stochastic integral with respect to the so-called isonormal process on $\mathcal{X}$, i.e. the stochastic Gaussian  measure with intensity $P$.

Let us note that in the i.i.d. case the limiting distribution is simpler than for $U$-statistics of the OU branching processes. For small branching rate however, the behaviour of $U$-statistics in our case still resembles the classical one as it is a sum of multiple stochastic integrals of different orders. In the remaining two cases the behaviour differs substantially. This can be seen as a result of the lack of independence. Although asymptotically the particles' positions become less and less dependent, in short time scale offspring of the same particle stay close one to another.

Let us finally mention some results for $U$-statistics in dependent situations, which have been obtained in the last years. In \cite{Borovkova:2001lq} the authors analysed the behaviour of $U$-statistics of stationary absolutely regular sequences and obtained the CLT in the non-degenerate case (with Gaussian limit). In \cite{Borisov:2008dq} the authors considered $\alpha$ and $\varphi$ mixing sequences and obtained a general CLT for canonical kernels. Interesting results for long-range dependent sequences have been also obtained in \cite{Dehling:1987ul}. A more recent interesting work is \cite{Moral:2010uq}, where the authors consider $U$-statistics of interacting particle systems.

\section{The case of $n=1$}
\label{sec:k_equal_one}

In the special case of $n=1$ the results presented in the previous section were proven in \cite{Adamczak:2011kx}.
Although this case obviously follows immediately form the results for general $n$ it is actually a starting point in the proof of the general result (similarly as in the case of $U$-statistics of i.i.d. random variables). Therefore, for the reader's convenience, we will now restate this case in a simpler language of \cite{Adamczak:2011kx}, not involving multiple stochastic integrals.

We will start with the law of large numbers

\begin{theorem}\label{thm:LLN}
    Let $\cbr{X_t}_{t\geq 0}$ be the OU branching system starting from $x\in\Rd$. Let us assume that $f \in \pspace{}(\Rd)$. Then
    \[
        \lim_{t \rightarrow  +\infty} e^{-\lambda_p t}\ddp{X_t}{f} = \ddp{f}{\eq} V_\infty \:\: \text{ in probability},
    \]
    or equivalently on the set of non-extinction, $Ext^c$, we have
    \begin{equation}
        \lim_{t \rightarrow  +\infty} |X_t|^{-1} \ddp{X_t}{f} = \ddp{f}{\eq} \:\: \text{ in probability}. \label{eq:LLN-simple}
    \end{equation}
    Moreover, if $f$ is bounded then the almost sure convergence holds.
\end{theorem}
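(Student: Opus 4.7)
The plan is to split $f$ into its $\eq$-mean and a centered remainder. Writing $f = g + c$ with $c := \ddp{f}{\eq}$ and $g := f - c$, we have
\[
e^{-\lambda_p t}\ddp{X_t}{f} = c V_t + e^{-\lambda_p t}\ddp{X_t}{g},
\]
and since $V_t \to V_\infty$ a.s.\ by Fact \ref{fact:aaa}, the whole matter reduces to proving that $e^{-\lambda_p t}\ddp{X_t}{g} \to 0$ (in probability in the general case, and almost surely when $f$ is bounded).

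For the in-probability statement I would control the first two moments of $\ddp{X_t}{g}$. The many-to-one formula for the binary branching OU system gives $\mathbb{E}_x\ddp{X_t}{g} = e^{\lambda_p t}(\T{t}g)(x)$, and since $g \in \pspace{}$ with $\ddp{g}{\eq} = 0$, ergodicity of the OU semigroup yields $(\T{t}g)(x) \to 0$, whence $e^{-\lambda_p t}\mathbb{E}_x\ddp{X_t}{g} \to 0$. The many-to-two formula then gives
\[
e^{-2\lambda_p t}\mathbb{E}_x\ddp{X_t}{g}^2 = e^{-\lambda_p t}(\T{t}g^2)(x) + 2\lambda p \int_0^t e^{-\lambda_p s}(\T{s}[(\T{t-s}g)^2])(x)\, ds.
\]
The first summand vanishes as $t \to \infty$. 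For the second, $(\T{t-s}g)(y) \to 0$ pointwise for each fixed $s$, and the integrand is dominated by a function of $s$ with integrable weight $e^{-\lambda_p s}$ (using the $\pspace{}$-norm control on $g$ and the fact that the OU semigroup preserves polynomial bounds), so dominated convergence delivers $e^{-2\lambda_p t}\mathbb{E}_x\ddp{X_t}{g}^2 \to 0$. Together these moment bounds give $L^2$-convergence $e^{-\lambda_p t}\ddp{X_t}{g} \to 0$, hence convergence in probability.

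For the almost sure statement under the assumption that $f$ is bounded, I would upgrade the above to a quantitative rate. Using the Mehler representation of the OU semigroup and its spectral gap $\mu$, bounded centered $g$ satisfies $|(\T{t}g)(x)| \cleq (1 + \|x\|^k) e^{-\mu t}$ for some $k$; inserting this into the many-to-two identity produces a bound of the form $\mathbb{E}_x|e^{-\lambda_p t}\ddp{X_t}{g}|^2 \leq C_x e^{-\alpha t}$ for some $\alpha > 0$. Chebyshev together with Borel--Cantelli then gives $e^{-\lambda_p n}\ddp{X_n}{g} \to 0$ a.s.\ along integer times. To pass to continuous time I would estimate the oscillation of $e^{-\lambda_p t}\ddp{X_t}{g}$ on each interval $[n,n+1]$ using boundedness of $g$, the a.s.\ bound $|X_t| \leq C_\omega e^{\lambda_p t}$ that follows from Fact \ref{fact:aaa}, and an $L^2$ estimate on the number of branching events in $[n,n+1]$, together with a second application of Borel--Cantelli.

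The main obstacle I expect is this final step. The $L^2$/in-probability argument is fairly routine once the many-to-one and many-to-two identities are in place, but upgrading to almost sure convergence in continuous time requires both a quantitative OU spectral-gap bound strong enough to force exponential variance decay and a separate oscillation estimate that handles the random branching pattern between integer times.
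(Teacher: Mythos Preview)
The paper does not actually prove this theorem: Section~\ref{sec:k_equal_one} merely restates the $n=1$ results, which are attributed to \cite{Adamczak:2011kx} (and, for the a.s.\ version with bounded continuous $f$, to \cite{Harris:2008wd} in the introduction). So there is no in-paper proof to compare against.

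Your in-probability argument via the many-to-two formula is correct and is in fact the $n=1$ specialisation of the paper's own bookkeeping machinery: see equation~\eqref{eq:trees} with $|\Lambda|=2$, which gives exactly your second-moment identity, and Facts~\ref{fact:approximation}--\ref{fact:approximation_supercritical}, which carry out the corresponding estimates in each regime. The paper also implicitly uses the route ``LLN in probability follows from the CLT'' (see the last line of the proof of Theorem~\ref{fact:multiLLD}), which is logically equivalent to your second-moment computation once the CLT normalisation is known.

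Your a.s.\ sketch, however, has a real gap precisely where you flag it. Controlling the oscillation of $e^{-\lambda_p t}\ddp{X_t}{g}$ on $[n,n+1]$ using only boundedness of $g$ and a count of branching events is not enough: the dominant source of oscillation is the simultaneous spatial movement of roughly $e^{\lambda_p n}$ particles, and bounded continuous $g$ gives no modulus-of-continuity control on $\sum_i \big(g(X_t(i))-g(X_n(i))\big)$. One needs either an additional regularity assumption on $g$, a uniform control on $\sup_{t\in[n,n+1]}|X_t(i)-X_n(i)|$ for all particles (a nontrivial maximal inequality), or a genuinely different argument. The standard route in the literature (as in \cite{Harris:2008wd}) avoids oscillation bounds altogether by working with an appropriate martingale/spine decomposition and applying martingale convergence directly in continuous time.
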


\subsection{Small branching rate: $\lambda_p < 2\mu$}
We denote $\tilde{f}(x):=f(x)- \ddp{f}{\varphi}$ and
\begin{equation}
    \sigma_f^2 := \ddp{\eq}{\tilde{f}^2} + 2\lambda p  \inti  \ddp{\eq}{\rbr{ e^{(\lambda_p/2) s}\T{s} \tilde{f}}^2}  \dd{s}. \label{eq:sigmaf}
\end{equation}
Let us also recall \eqref{eq:defV} and that $W$ is $V_\infty$ conditioned on $Ext^c$. In this case, the behaviour of $X$ is given by the following
\begin{theorem}\label{thm:clt1}
    Let $\cbr{X_t}_{t\geq 0}$ be the OU branching system starting from $x\in \Rd$. Let us assume that $\lambda_p<2\mu$ and $f\in \pspace{}(\Rd)$. Then $\sigma_f^2 <+\infty$ and conditionally on the set of non-extinction $Ext^c$ there is the  convergence
    \[
        \rbr{e^{-\lambda_p t}|X_t| , \frac{|X_t| - e^{t \lambda_p} V_\infty}{\sqrt{|X_t|}}, \frac{\ddp{X_t}{f} - |X_t|\ddp{f}{\eq} }{ \sqrt{|X_t|} }  } \rightarrow^d (W, G_1, G_2),
    \]
where $G_1\sim \mathcal{N}(0, 1/(2p-1)), G_2\sim \mathcal{N}(0,\sigma_f^2)$ and $W, G_1, G_2$ are independent random variables.
\end{theorem}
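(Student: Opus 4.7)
My plan is first to reduce to a centred kernel and then to establish the CLT through a temporal splitting of the branching system combined with a conditional Lindeberg argument. Write $\tilde f := f - \ddp{f}{\eq}$, so that $\ddp{X_t}{f} - |X_t|\ddp{f}{\eq} = \ddp{X_t}{\tilde f}$ and $\tilde f$ is centred with respect to $\eq$. To see $\sigma_f^2 < +\infty$, I would expand $\tilde f$ in the Hermite basis of $L^2(\eq)$ (the spectral basis of $L$): on each eigenspace of degree $k\ge 1$ the semigroup $\T{s}$ contracts at rate $e^{-k\mu s}$, so the integrand in \eqref{eq:sigmaf} decays like $e^{(\lambda_p-2\mu)s}$ and is integrable under the assumption $\lambda_p < 2\mu$.

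The core of the proof is a branching decomposition at an intermediate time $s = s(t)$ chosen so that both $s \to \infty$ and $t-s \to \infty$. By the Markov property,
\[
\ddp{X_t}{\tilde f} = \sum_{i=1}^{|X_s|} Y_i^{(t-s)},
\]
where, conditionally on $\mathcal F_s$, the $Y_i^{(t-s)}$ are independent and $Y_i^{(t-s)}$ has the law of $\ddp{X_{t-s}}{\tilde f}$ started from $X_s(i)$. The standard branching second-moment identity
\[
\mathbb{E}_x \ddp{X_r}{\tilde f}^2 = e^{\lambda_p r}\T{r}(\tilde f^2)(x) + 2\lambda p \int_0^r e^{\lambda_p(r+u)}\T{r-u}\bigl((\T{u}\tilde f)^2\bigr)(x)\,\du
\]
shows, after dividing by $e^{\lambda_p(t-s)}$ and letting $t-s \to \infty$, that $x \mapsto e^{-\lambda_p(t-s)}\var_x\bigl(\ddp{X_{t-s}}{\tilde f}\bigr)$ converges to the constant $\sigma_f^2$ in the $\pspace{}(\Rd)$ norm. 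Applying the law of large numbers of Theorem \ref{thm:LLN} to this function then yields
\[
\frac{1}{|X_t|}\sum_{i=1}^{|X_s|}\var\bigl(Y_i^{(t-s)}\,\big|\,\mathcal F_s\bigr)\longrightarrow \sigma_f^2\quad\text{in probability on } Ext^c,
\]
and the conditional Lindeberg CLT for $\mathcal F_s$-conditionally independent arrays gives $\ddp{X_t}{\tilde f}/\sqrt{|X_t|} \to^d G_2 \sim \mathcal N(0, \sigma_f^2)$.

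For the joint convergence with the first two coordinates, I would observe that $W = \lim e^{-\lambda_p t}|X_t|$ and the fluctuation limit $G_1$ are both measurable with respect to the Galton--Watson $\sigma$-algebra $\mathcal G := \sigma(|X_u|,\, u\ge 0)$, while the limit $G_2$ depends on the spatial dynamics and, conditionally on $\mathcal G$, retains the same Gaussian law, because the OU motion and the branching mechanism are mutually independent. Independence of $W$ from $G_1$ is a classical property of supercritical Galton--Watson processes (Fact \ref{fact:aaa} and \cite{Adamczak:2011kx}); combining these facts yields the joint independence claim.

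The main obstacle is verifying the conditional Lindeberg condition uniformly over the spatial cloud $\{X_s(i)\}_{i\le |X_s|}$: since $f \in \pspace{}$ is only of polynomial growth, one needs a fourth-moment bound of the form $\mathbb{E}_x \ddp{X_r}{\tilde f}^4 \le C e^{2\lambda_p r}(1 + \norm{x}{}{N})$ with constants independent of $r$, which I would obtain by iterating the branching moment calculation once more and invoking the uniform $L^4$ estimate on $e^{-\lambda_p r}|X_r|$ from Fact \ref{fact:aaa}. A secondary technicality is the passage from bounded $f$ to $f \in \pspace{}$, handled by truncation together with continuity of $f \mapsto \sigma_f^2$ in the $\pspace{}$ norm.
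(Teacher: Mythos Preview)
The paper does not actually prove Theorem \ref{thm:clt1}; it is quoted from the companion paper \cite{Adamczak:2011kx} and then used as the base case for the general $U$-statistic results. There is therefore no proof in the present paper to compare yours against directly.

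Your strategy for the marginal CLT of the third coordinate --- split at an intermediate time $s$ and apply a conditional Lindeberg theorem to the $|X_s|$ conditionally independent subsystems --- is standard and sound in outline. Two technical points are glossed over. First, the $Y_i^{(t-s)}$ are not conditionally centred: $\E[Y_i^{(t-s)}\mid \mathcal F_s]=e^{\lambda_p(t-s)}\T{t-s}\tilde f(X_s(i))$, and the residual bias $e^{\lambda_p(t-s)}\ddp{X_s}{\T{t-s}\tilde f}$ must be shown to be $o(\sqrt{|X_t|})$; this constrains the choice of $s(t)$ and is where the hypothesis $\lambda_p<2\mu$ enters a second time. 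Second, you normalise by $\sqrt{|X_t|}$, which is not $\mathcal F_s$-measurable; one has to run Lindeberg with an $\mathcal F_s$-measurable (or deterministic) normalisation and then pass to the random one via Slutsky.

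The genuine gap is the joint-convergence argument. Your sentence ``the fluctuation limit $G_1$ is measurable with respect to $\mathcal G=\sigma(|X_u|,\,u\ge 0)$'' is not meaningful: $G_1$ exists only as a distributional limit, not as a random variable on the underlying probability space, so there is no $\sigma$-algebra with respect to which it could be measurable. The intuition --- that the first two coordinates are functionals of the genealogy while the third comes from the spatial motion --- is correct, but to convert it into a proof you must show that the \emph{conditional} characteristic function of $\ddp{X_t}{\tilde f}/\sqrt{|X_t|}$ given the genealogical $\sigma$-algebra converges in probability to the deterministic constant $\exp(-\sigma_f^2\theta^2/2)$. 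That, combined with the joint CLT for $\rbr{e^{-\lambda_p t}|X_t|,\,(|X_t|-e^{\lambda_p t}V_\infty)/\sqrt{|X_t|}}$ in the pure Galton--Watson process, yields the factored limiting law. Your $\mathcal F_s$-conditioning does not deliver this directly, since $\mathcal F_s$ carries spatial information up to time $s$ but none of the post-$s$ genealogy on which the second coordinate depends.
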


\subsection{Critical branching rate: $\lambda_p = 2\mu$}
We denote
\begin{equation}
    \sigma_f^2 := \frac{\lambda p\sigma^2}{\mu} \sum_{i=1}^{d} \ddp{f}{\frac{\partial \eq}{\partial x_i}}^2.
    \label{eq:sigmafCritical}
\end{equation}
Note that the same symbol $\sigma_f^2$ has already been used to denote the asymptotic variance in the small branching case. However, since these cases will always be treated separately, this should not lead to ambiguity.

\begin{theorem}\label{thm:cltCritical}
    Let $\cbr{X_t}_{t\geq 0}$ be the OU branching system starting from $x\in \Rd$. Let us assume that $\lambda_p=2\mu$ and $f\in \pspace{}(\Rd)$. Then $\sigma_f^2 <+\infty$ and conditionally on the set of non-extinction $Ext^c$ there is the  convergence
    \[
        \rbr{e^{-\lambda_p t}|X_t| , \frac{|X_t| - e^{t \lambda_p} V_\infty}{\sqrt{|X_t|}}, \frac{\ddp{X_t}{f} - |X_t|\ddp{f}{\eq} }{ t^{1/2} \sqrt{|X_t|} }  } \rightarrow^d (W, G_1, G_2),
    \]
where $G_1\sim \mathcal{N}(0, 1/(2p-1)), G_2\sim \mathcal{N}(0,\sigma_f^2)$ and $W, G_1, G_2$ are independent random variables.
\end{theorem}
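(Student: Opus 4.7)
The plan is to first reduce to a canonical kernel by setting $\tilde f := f - \ddp{f}{\eq}$, so that $\ddp{X_t}{f} - |X_t|\ddp{f}{\eq} = \ddp{X_t}{\tilde f}$ and $\ddp{\tilde f}{\eq} = 0$. Joint convergence of the first two coordinates to $(W, G_1)$ with $W\perp G_1$ on $Ext^c$ is classical for supercritical Galton-Watson processes, so the new content is to show that $\ddp{X_t}{\tilde f}/\sqrt{t|X_t|}$ converges to a centered Gaussian $G_2$ of variance $\sigma_f^2$, jointly with and independently of $(W, G_1)$.

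Next I would identify $\sigma_f^2$ and the correct normalization by applying the many-to-one and many-to-two formulas for binary branching to obtain
\[
\var_x\ddp{X_t}{\tilde f} = e^{\lambda_p t}\T{t}(\tilde f^2)(x) + 2\lambda p\, e^{\lambda_p t}\int_0^t e^{\lambda_p s}\T{t-s}((\T{s}\tilde f)^2)(x)\dd{s} - e^{2\lambda_p t}(\T{t}\tilde f(x))^2.
\]
The OU semigroup on $L^2(\eq)$ has discrete spectrum $-k\mu$, $k\ge 0$, with the first Hermite chaos spanned by the coordinates $x_i$, so $\T{s}\tilde f \sim e^{-\mu s}P_1\tilde f$ in $L^2(\eq)$ and $\ddp{\eq}{(\T{s}\tilde f)^2} \sim e^{-2\mu s}\norm{P_1\tilde f}{\eq}{2}$. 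In the critical regime $\lambda_p = 2\mu$ the integrand $e^{\lambda_p s}\ddp{\eq}{(\T{s}\tilde f)^2}$ converges to a nonzero constant rather than being integrable, producing the linear growth in $t$ that forces the $t e^{\lambda_p t}$ normalization. Using $\partial\eq/\partial x_i = -(2\mu/\sigma^2)x_i\eq$ and $\int x_i^2\eq\dx = \sigma^2/(2\mu)$ identifies $2\lambda p \norm{P_1\tilde f}{\eq}{2}$ with $\sigma_f^2$ from \eqref{eq:sigmafCritical}, and $\sigma_f^2<\infty$ is immediate from $\tilde f\in\pspace{}$ and $x_i\eq\in L^1$.

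For the joint CLT I would work with the characteristic functional
\[
v_t(s,x) := \ev{}_x\exp\bigl\{i\zeta e^{-\lambda_p s}|X_s| + i\theta\ddp{X_s}{\tilde f}/\sqrt{te^{\lambda_p t}}\bigr\},
\]
which satisfies the nonlinear branching integral equation coming from $\partial_s v = Lv + \lambda(pv^2 + (1-p) - v)$. Writing $v = 1+w$ with $w$ small, iterating the Duhamel formula, and repeatedly using the key asymptotic $e^{\lambda_p s}\ddp{\eq}{(\T{s}\tilde f)^2}\to \norm{P_1\tilde f}{\eq}{2}$ established above, one identifies the limit of $v_t(t,x)$ as the joint Laplace functional of $(V_\infty, \sqrt{V_\infty}G_2)$ with $G_2\sim\mathcal{N}(0,\sigma_f^2)$ independent of $V_\infty$. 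Dividing the third coordinate by $\sqrt{|X_t|/e^{\lambda_p t}}\to\sqrt{W}$ on $Ext^c$ then strips off the $\sqrt{V_\infty}$, giving $G_2$ independent of $W$. Independence from $G_1$ follows because, at the CLT scale, the spatial fluctuation is driven by the first Hermite chaos while $G_1$ is driven by the constant function, and these are orthogonal in $L^2(\eq)$, so the cross-terms in the many-to-two calculation vanish.

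The main obstacle will be the analysis of the nonlinear branching PDE in the critical regime. Because $e^{\lambda_p s}\T{s}\tilde f(x)$ grows like $e^{\mu s}$ rather than remaining bounded, the naive low-order expansion of $w$ in the small parameter $1/\sqrt{te^{\lambda_p t}}$ cannot be straightforwardly truncated near $s=t$, and one must control the Duhamel iterates carefully. A robust alternative is to decompose $\ddp{X_t}{\tilde f}$ at an intermediate time $t-T$ into a conditionally independent sum over descendants of particles present at time $t-T$ and apply the conditional Lindeberg CLT. The scaling $\sigma^2_{\T{T}\tilde f} = e^{-2\mu T}\sigma^2_{\tilde f}$ combined with the critical-regime algebraic identity $e^{2\lambda_p T}\cdot e^{-2\mu T} = e^{\lambda_p T}$ is exactly what makes the conditional-mean and conditional-variance contributions add up to $\sigma_f^2$ regardless of the choice of $T$; this cancellation is the signature of criticality.
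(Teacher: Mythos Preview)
The paper does not actually prove Theorem~\ref{thm:cltCritical}; Section~\ref{sec:k_equal_one} restates the $n=1$ results from the companion paper \cite{Adamczak:2011kx} and uses them as the input to the general-$n$ arguments. So there is no in-paper proof to compare against, and your sketch should be read on its own merits.

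Your variance computation via the many-to-two formula is correct, and the identification of the limiting variance through the first Hermite chaos of the OU generator is clean: with $\partial\eq/\partial x_i = -(2\mu/\sigma^2)x_i\eq$ and $\int x_i^2\eq\,\dx = \sigma^2/(2\mu)$ one indeed gets $2\lambda p\,\|P_1\tilde f\|_{L^2(\eq)}^2 = (\lambda p\sigma^2/\mu)\sum_i\langle f,\partial\eq/\partial x_i\rangle^2 = \sigma_f^2$. The characteristic-functional route via the branching integral equation is the standard strategy and is almost certainly what is carried out in \cite{Adamczak:2011kx}; your acknowledgment that the Duhamel expansion must be handled carefully in the critical regime is appropriate.

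The ``robust alternative'' you propose at the end, however, has a circularity problem as written. For \emph{fixed} $T$, the conditionally independent fluctuation $\sum_i\big(\langle X^{(i)}_t,\tilde f\rangle - e^{\lambda_p T}\T{T}\tilde f(X_{t-T}(i))\big)$ lives at scale $\sqrt{|X_t|}$, which is $o(\sqrt{t|X_t|})$; the Lindeberg CLT you invoke for it therefore contributes nothing to the limit. The entire mass sits in the conditional-mean term $e^{\lambda_p T}\langle X_{t-T},\T{T}\tilde f\rangle$, and establishing its CLT at scale $\sqrt{t|X_t|}$ is exactly Theorem~\ref{thm:cltCritical} applied to the function $\T{T}\tilde f$. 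Your scaling identity $e^{\lambda_p T}\sigma^2_{\T{T}\tilde f}=\sigma^2_{\tilde f}$ is a nice consistency check showing the decomposition is self-similar, but it is not a proof. To turn this into an argument you would have to let $T=T_t\to\infty$ with $t$ in a controlled way and redo both halves of the analysis; that is substantially more work than you indicate, and is not obviously simpler than the characteristic-functional route. The independence $G_1\perp G_2$ also deserves more than the orthogonality remark: one needs the \emph{joint} characteristic function to factor asymptotically, which in \cite{Adamczak:2011kx} comes out of the same PDE analysis rather than from a separate covariance computation.
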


\subsection{Fast branching rate: $\lambda_p > 2\mu$}

In the following theorem we use the notation introduced in Section \ref{sec:fast_br}.

\begin{theorem}\label{thm:clt2}
    Let $\cbr{X_t}_{t\geq 0}$ be the OU branching system starting from $x\in \Rd$. Let us assume that $\lambda_p>2\mu$ and $f\in \pspace{}(\Rd)$. Then conditionally on the set of non-extinction $Ext^c$ there is the  convergence
    \begin{equation}
        \rbr{e^{-\lambda_p t}|X_t| , \frac{|X_t| - e^{t \lambda_p} V_\infty}{\sqrt{|X_t|}}, \frac{\ddp{X_t}{f} - |X_t|\ddp{f}{\eq} }{ \exp\rbr{(\lambda-\mu)t} }  } \rightarrow^d (W, G, \ddp{\grad f}{\eq} \circ J), \label{eq:triple}
    \end{equation}
where $G\sim \mathcal{N}(0, 1/(2p-1))$, $(W,J), G$ are independent and $J$ is $H_\infty$ conditioned on $Ext^c$. Moreover
\[
    \rbr{e^{-\lambda_p t}|X_t|, \frac{\ddp{X_t}{f} - |X_t|\ddp{f}{\eq} }{ \exp\rbr{(\lambda-\mu)t} }  } \rightarrow (V_\infty,\ddp{\grad f}{\eq} \circ H_\infty), \quad \text{in probability.}
\]
\end{theorem}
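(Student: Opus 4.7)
I would follow the natural linearisation strategy for the fast branching regime: the dominant contribution to $\ddp{X_t}{f} - |X_t|\ddp{f}{\eq}$ should come from the vector-valued martingale $H_t$ of \eqref{eq:martingale}. Set $\tilde{f}(y) := f(y) - \ddp{f}{\eq}$ and $v := \ddp{\grad f}{\eq}\in\R^d$, and decompose $\tilde f(y) = v\circ y + g(y)$ with $g(y) := \tilde f(y) - v\circ y$. Since $\eq$ from \eqref{eq:equilibrium} is centred at the origin, the residual $g$ satisfies both $\ddp{g}{\eq} = 0$ and $\ddp{\grad g}{\eq} = 0$. Summing over particles and invoking the definition of $H_t$,
\[
e^{-(\lambda_p-\mu)t}\bigl[\ddp{X_t}{f} - |X_t|\ddp{f}{\eq}\bigr] = v\circ H_t + e^{-(\lambda_p-\mu)t}\ddp{X_t}{g}.
\]
By Fact \ref{fact:martingaleConvergence} the first summand converges almost surely and in $L^2$ to $v\circ H_\infty = \ddp{\grad f}{\eq}\circ H_\infty$, which is exactly the claimed limit.

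The second step is to show that $e^{-(\lambda_p-\mu)t}\ddp{X_t}{g}\to 0$ in $L^2$. The standard second moment formula for branching particle systems (derived in \cite{Adamczak:2011kx}) reads
\[
\mathbb{E}_x \ddp{X_t}{g}^2 = e^{\lambda_p t}\T{t}(g^2)(x) + 2\lambda p e^{\lambda_p t}\int_0^t e^{\lambda_p s}\T{t-s}\bigl((\T{s}g)^2\bigr)(x)\,\ds.
\]
The double centring of $g$ removes the zeroth and first Hermite levels from the spectral expansion of $\T{s}$, yielding a pointwise bound of the form $|\T{s}g(y)|\cleq (1+\|y\|)^N e^{-2\mu s}$ with some $N$ depending on $g$. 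Plugging this estimate in (and using that $\T{t-s}$ preserves polynomial tails uniformly in $t-s$) gives $\mathbb{E}_x\ddp{X_t}{g}^2 = O(e^{\lambda_p t})$. Dividing by $e^{2(\lambda_p-\mu)t}$ then yields $O(e^{-(\lambda_p-2\mu)t})\to 0$, using the assumption $\lambda_p>2\mu$. Combined with the first paragraph, this already establishes the in-probability statement of the theorem.

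Finally, to obtain the joint convergence with the Galton--Watson fluctuation, I would use the classical decomposition $e^{\lambda_p t}V_\infty = \sum_{u\in X_t} V_\infty^{(u)}$, where $\{V_\infty^{(u)}\}_{u\in X_t}$ are i.i.d.\ copies of $V_\infty$ attached to the subtrees rooted at the time-$t$ particles and independent of the natural filtration $\mathcal{F}_t$. A conditional CLT applied to this sum gives $(|X_t|-e^{\lambda_p t}V_\infty)/\sqrt{|X_t|}\rightarrow^d G\sim\mathcal{N}(0,1/(2p-1))$ with $G$ asymptotically independent of $\mathcal{F}_t$ for any fixed cut-off. Since $V_t\to V_\infty$ and $H_t\to H_\infty$ are $\mathcal{F}_t$-measurable almost-sure limits, a characteristic function argument (as in the proofs of Theorems \ref{thm:clt1} and \ref{thm:cltCritical}) then yields joint convergence of $(V_t,\,(|X_t|-e^{\lambda_p t}V_\infty)/\sqrt{|X_t|},\,v\circ H_t)$ to $(V_\infty, G, v\circ H_\infty)$ with $G$ independent of $(V_\infty, H_\infty)$. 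Conditioning on $Ext^c$ via Fact \ref{fact:aaa} replaces $V_\infty$ by $W$ and $H_\infty$ by $J$, giving \eqref{eq:triple}. The principal technical difficulty is precisely this asymptotic independence: at finite time $|X_t|$ and $H_t$ are strongly coupled, so one must carefully isolate the post-time-$t$ branching contribution responsible for the Gaussian $G$; the moment estimate for $g$ and the martingale convergence of $H_t$ are, in comparison, routine.
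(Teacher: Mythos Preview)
The paper does not actually prove Theorem~\ref{thm:clt2}: as explained at the start of Section~\ref{sec:k_equal_one}, the $n=1$ results (Theorems~\ref{thm:LLN}, \ref{thm:clt1}, \ref{thm:cltCritical}, \ref{thm:clt2}) are quoted from \cite{Adamczak:2011kx} and then used as building blocks in the proofs of the general $U$-statistic theorems. There is therefore no in-paper argument to compare your sketch against.

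On its own merits your outline is essentially correct and is the natural strategy one expects in \cite{Adamczak:2011kx}. Two small repairs are worth noting. First, the intermediate claim $\mathbb{E}_x\ddp{X_t}{g}^2 = O(e^{\lambda_p t})$ is not literally true when $\lambda_p>4\mu$: the integral $\int_0^t e^{(\lambda_p-4\mu)s}\,ds$ is then of order $e^{(\lambda_p-4\mu)t}$, and the second moment is $O(e^{(2\lambda_p-4\mu)t})$. This is harmless for your purposes, since after dividing by $e^{2(\lambda_p-\mu)t}$ one still obtains $O(e^{-2\mu t})\to 0$, but the bound as stated is inaccurate. Second, the pointwise decay $|\T{s}g(y)|\cleq(1+\|y\|)^{N}e^{-2\mu s}$ does not follow from $L^2(\eq)$ spectral considerations alone for a merely continuous $g\in\pspace{}$; the second-order Taylor argument that exploits $\ddp{\nabla g}{\eq}=0$ requires $g$ to be at least $C^2$. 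The fix is the smoothing device used elsewhere in the paper (Lemma~\ref{lem:approximationQuality} and the proof of Fact~\ref{fact:f3Better}): replace $g$ by $\hat g=\T{1}g$, gain the needed regularity, and absorb the extra time-$1$ shift into the constants. The asymptotic-independence step you single out as the main difficulty is indeed where the real work lies; the conditional-CLT route you describe is the correct mechanism.
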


\section{Proofs} 
\label{sec:proofs}
We will now pass to the proofs of the results announced in Section \ref{sec:results}. Their general structure is similar as in the case of $U$-statistics of independent random variables, i.e. all the theorems will be proved first for linear combinations of tensor products and then via suitable approximations extended to proper function spaces.

In the next section we will recall some additional (standard) facts concerning the Orstein-Uhlenbeck process and $U$-statitistics. Next,
in Section \ref{sec:book_keeping} we will develop general tools needed for the approximation, which will be the most technical part of the proof. Section \ref{sub:proof_of_} will be devoted to rather short proofs of the main results.

From now on we will often work conditionally on the set of non-extinction $Ext^c$, which  will not be explicitly mentioned in the proofs (however should be clear from the context).

\subsection{Auxiliary facts and notation}

\subsubsection{The Orstein-Uhlenbeck process}

The semigroup of the Ornstein-Uhlenbeck process can be represented by
\begin{equation}
    \T{t} f(x) = (g_t \ast f)(x_t), \quad x_t:= e^{-\mu t} x, \label{eq:OUrep}
\end{equation}
where
\[
    g_t(x) = \rbr{\frac{\mu}{\pi \sigma_t^2}}^{d/2}\exp \cbr{-\frac{\mu}{\sigma_t^2} \norm{x}{}{2}}, \quad \sigma_t^2 := \sigma^2(1-e^{-2\mu t}).
\]
Let us recall \eqref{eq:equilibrium}. We denote $ou(t) := \sqrt{1- e^{-2\mu t}}$ and let $G\sim \eq$. The semigroup $\T{}$ has the following useful representations
\begin{equation}
    \T{t} f(x) = \intr f(x_t - y)g_t(y) \dd{y} =  \intr f\rbr{x e^{-\mu t} +  ou(t) y } \eq(y) \dd{y} = \ev{} f(xe^{-\mu t} + ou(t) G). \label{eq:semi-group}
\end{equation}
We also denote
\[
    \T{s}^{\lambda} := e^{\lambda s} \T{s}.
\]

\subsubsection{$U$- and $V$-statistics \label{sec:aux_Ustat}}

We will now briefly recall one of the standard tools of the theory of $U$-statistics, which we will use in the sequel, namely the Hoeffding decomposition.

Let us introduce for $I \subseteq \{1,\ldots,n\}$ the Hoeffding projection of $f$ corresponding to $I$ as the function $\Pi_I f \colon \R^{|I|d} \to \R$, given by
the formula
\begin{equation}
    \Pi_I f(\rbr{x_i}_{i\in I}) = \int_{\R^{nd}} \Big(\prod_{i\notin I} \eq(d y_i) \prod_{i \in I} (\delta_{x_i} - \eq)(dy_i)\Big) f(y_1,\ldots,y_n). \label{eq:hoefdingprojection}
\end{equation}
Once can easily see that for $|I| \ge 1$,  $\Pi_I f$ is a canonical kernel. Moreover $\Pi_\emptyset f = \int_{\R^{nd}} f(x_1,\ldots,x_n)\prod_{i=1}^n\eq(dx_i)$.

Note that if $f$ is symmetric (i.e. invariant with respect to permutations of arguments), $\Pi_I f$ depends only on the cardinality of $f$. In this case we speak about the $k$-th Hoeffding projection
($k = 0,\ldots,n$), given by
\begin{displaymath}
\Pi_k f(x_1,\ldots,x_k) = \ddp{(\delta_{x_1} - \eq)\otimes\cdots\otimes(\delta_{x_k} - \eq)\otimes \eq^{\otimes(n-k)}}{f}.
\end{displaymath}

A symmetric kernel in $n$ variables is called \emph{degenerate of order $k-1$} ($1 \le k \le n$) iff $k = \min\{i> 0 \colon \Pi_i f \not\equiv  0 \}$.
The order of degeneracy is responsible for the normalisation and the form of the limit in the central limit theorem for $U$-statistics, e.g. if the kernel is non-degenerate, i.e. $\Pi_1 f \not\equiv 0$, then the corresponding $U$-statistic of an i.i.d. sequence behaves like a sum of independent random variables and converges to a Gaussian limit. The same phenomenon will be present also in our situation (see Section \ref{sec:last}).

In the particular case $k = n$ the definition of the Hoeffding projection reads as

\[
    \Pi_n(f) := \ddp{\rbr{\otimes_{i=1}^n  (\delta_{x_i} - \eq)}}{f}.
\]
One easily checks that
\begin{displaymath}
f(x_1,\ldots,x_n) = \sum_{I \subseteq \{1,\ldots,n\}} \Pi_If((x_i)_{i\in I}),
\end{displaymath}
which gives us the aforementioned Hoeffding decomposition of $U$-statistics
\begin{displaymath}
U_t^n(f) = \sum_{I\subseteq\{1,\ldots,n\}}\frac{(|X_t|-|I|)!}{(|X_t| - n)!} U_t^{|I|}(\Pi_I f),
\end{displaymath}
which in the case of symmetric kernels simplifies to
\begin{displaymath}
U_t^n(f) = \sum_{k=0}^n\binom{n}{k}\frac{(|X_t|-k)!}{(|X_t| - n)!}  U_t^{k}(\Pi_k f),
\end{displaymath}
where we use the convention $U_t^0(a) = a$ for any constant $a$.

For technical reasons we will also consider the notion of a $V$-statistic which is closely related to $U$-statistics, and is defined as
\begin{equation}
        V^n_t(f) := \sum_{ i_1, i_2, \ldots, i_n = 1 }^{|X_t|} f(X_t(i_1), X_t(i_2), \ldots, X_t(i_n)).\label{eq:vstat}
\end{equation}
The corresponding Hoeffding decomposition is
\begin{equation}
        V^n_t(f) = \sum_{I \subseteq \{1,\ldots,n\}} |X_t|^{n-|I|}V_t^{|I|}(\Pi_I f), \label{eq:tmp666}
\end{equation}
where again we set $V_t^0(a) = a$ for any constant $a$.

In the proof of our results we will use a standard observation that a $U$-statistic can be written as a sum of $V$-statistics. More precisely,
let $\mathcal{J}$ be the collection of partitions of $\{1,\ldots,n\}$ i.e. of all sets $J = \{J_1,\ldots,J_k\}$, where $J_i$'s are nonempty, pairwise disjoint and $\bigcup_i J_i = \{1,\ldots,n\}$. For $J$ as above let $f_J$ be a function of $|J|$ variables $x_1,\ldots,x_{|J|}$, obtained by substituting $x_i$ for all the arguments of $f$ corresponding to the set $J_i$, e.g. for $n = 3$ and $J = \{\{1,2\},\{3\}\}$, $f_J(x_1,x_2) = f(x_1,x_1,x_2)$.
An easy application of the inclusion-exlusion formula yields that
\begin{align}\label{eq:inex}
                U_t^n(f) =  \sum_{J \in \mathcal{J}} a_J V_t^{|J|}(f_J),
\end{align}
where $a_J$ are some integers depending only on the partition $J$. Moreover one can easily check that
if $J = \{ \cbr{1},\ldots,\cbr{n}\}$, then $a_J = 1$, whereas if $J$ consists of sets with at most two elements then $a_J = (-1)^{k}$ where $k$ is the number of two-element sets in $J$. Let us also note that partitions consisting only of one- and two-element sets can be in a natural way identified with the Feynman diagrams (defined in Section \ref{sec:definitions_and_notation}).

\subsection{Approximation } 
\label{sec:book_keeping}

\subsubsection{Approximation of functions}
First we will show that any function in $\pspace{}(\R^{nd})$ can be approximated by tensor functions. For a subset $A$ of a linear space by $span(A)$ we denote the set of finite linear combinations of elements of $A$.

\begin{lemma} \label{lem:approx2}
	Let $A := \cbr{\otimes_{i=1}^n f_i : f_i \; \textrm{bounded continuous} }$ and $f\in \pspace{}(\R^{nd})$ be a canonical kernel then there exists a sequence $\cbr{f_k} \subset span(A)$ such that each $f_k$ is canonical and
    \[
        \norm{f_k-f}{\pspace{}}{} \rightarrow 0, \quad \text{ as } k\rightarrow +\infty.
    \]
	    %
\end{lemma}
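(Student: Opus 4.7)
The plan is to proceed in two steps. First, I would approximate $f$ in $\norm{\cdot}{\pspace{}}{}$ by elements of $\text{span}(A)$ without worrying about canonicity; then I would recover canonicity by applying the full Hoeffding projection $\Pi := \Pi_{\cbr{1,\ldots,n}}$ from \eqref{eq:hoefdingprojection} to the approximating sequence.

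For the first step, note that $n(x) = \prod_{i=1}^n n_1(x_i)$ where $n_1(y) := \exp(-\sum_{j=1}^d |y_j|)$, so the map $f\mapsto n\cdot f$ is an isometric embedding of $\pspace{}(\R^{nd})$ into $C_0(\R^{nd})$ (the values decay at infinity because $n$ decays exponentially while $f$ grows only polynomially). I would then combine two classical density statements. By Stone--Weierstrass on the one-point compactification, the algebraic tensor product $C_0(\R^d)^{\otimes n}$ is sup-norm dense in $C_0(\R^{nd})$. Moreover, every $h\in C_c(\R^d)$ can be written as $n_1 g$ with $g\in C_b(\R^d)$ by setting $g:=h/n_1$ on $\text{supp}\,h$ (this is continuous and bounded because $h$ vanishes on the boundary of its support) and extending by $0$, so $\cbr{n_1 g : g\in C_b(\R^d)}$ is itself dense in $C_0(\R^d)$. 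A standard multilinear/diagonal argument then produces a sequence $F_k = \sum_j \bigotimes_{i=1}^n (n_1 g_{j,i}^{(k)})$ converging to $n\cdot f$ uniformly, and setting $f_k := \sum_j \bigotimes_{i=1}^n g_{j,i}^{(k)} \in \text{span}(A)$ gives $\norm{f_k - f}{\pspace{}}{} = \norm{F_k - n\cdot f}{\infty}{} \to 0$.

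For the second step, I would set $\tilde f_k := \Pi f_k$ and verify three points. (i) On a pure tensor, $\Pi \bigotimes_{i=1}^n g_i = \bigotimes_{i=1}^n \rbr{g_i - \ddp{g_i}{\eq}}$, so $\Pi$ preserves $\text{span}(A)$, and by the remark following \eqref{eq:hoefdingprojection} its image consists of canonical kernels. (ii) Expanding $\Pi g$ as a signed sum over subsets $J\subseteq\cbr{1,\ldots,n}$ of partial integrations of $g$ against $\eq$ and using that $\eq$ is Gaussian, hence has finite exponential moments in every coordinate, one checks that $\Pi\colon\pspace{}(\R^{nd}) \to \pspace{}(\R^{nd})$ is a bounded operator. (iii) Since $f$ is canonical, the Hoeffding decomposition $f = \sum_I \Pi_I f$ collapses to $f = \Pi f$, so $\norm{\tilde f_k - f}{\pspace{}}{} = \norm{\Pi(f_k - f)}{\pspace{}}{} \cleq \norm{f_k - f}{\pspace{}}{} \to 0$, and each $\tilde f_k$ is a canonical element of $\text{span}(A)$, as required.

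The main technical point I expect to wrestle with is the first step: density of tensor products in the weighted $\pspace{}$-norm is not a direct consequence of Stone--Weierstrass, because the weight $n$ restricts which one-dimensional factors are admissible in each tensor. Absorbing the weight into the individual factors via the inclusion $n_1 \cdot C_b(\R^d) \supseteq C_c(\R^d)$ reduces the problem to a classical unweighted density statement on $C_0(\R^{nd})$. Once this is in place, boundedness of $\Pi$ in the weighted norm together with the identity $\Pi f = f$ for canonical $f$ make the second step essentially routine.
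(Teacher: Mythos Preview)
Your proposal is correct and follows the same two-step strategy as the paper: first establish density of $\text{span}(A)$ in $\pspace{}(\R^{nd})$ via Stone--Weierstrass, then apply the full Hoeffding projection $\Pi_{\{1,\ldots,n\}}$ to restore canonicity while staying in $\text{span}(A)$. The only difference is in the execution of the density step: the paper argues tersely that it suffices to approximate on a large box $[-M,M]^{nd}$, whereas you absorb the weight $n$ into the factors and reduce to the unweighted density of $C_c(\R^d)^{\otimes n}$ in $C_0(\R^{nd})$; your route is more explicit and avoids having to control the approximant off the box, but the underlying idea is the same.
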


\begin{proof} First we prove that $span(A)$ is dense in $\pspace{}(\R^{nd})$. Let us notice that given a function $f\in \pspace{}(\R^{nd})$ it suffices to approximate it on some box $[-M,M]^d$, $M>0$. The box is a compact set and an approximation exists due to the Stone-Weierstrass theorem.
	
	Now, let $f\in \pspace{}(\R^{nd})$. We may find a sequence $\cbr{h_k} \subset span(A)$ such that $h_k \rightarrow f$ in $\pspace{}$. Let us recall the Hoeffding projection \eqref{eq:hoefdingprojection} and denote $I=\cbr{1,2,\ldots,n}$. Now direct calculation (using the exponential integrability of Gaussian variables) reveals that the sequence $f_k := \Pi_{I} h_k$ fulfils the conditions of the lemma.
\end{proof}

Let $f\in \pspace{}(\R^{nd})$ and $I \subset \cbr{1,2,\ldots, n}$ with $I$ = k. We define
\begin{equation}
	\hat{f}_{I} (x_1, x_2, \ldots, x_n) := \int_{\R^{kd}} f(z_1, z_2, \ldots, z_n)  \prod_{i\in I} g_1(x_{i} e^{-\mu}-y_{i})  \dd{y}_{i}, \label{eq:smoothing}
\end{equation}
where $z_i = y_i$ if $i\in I$, $z_i = x_i$ otherwise and $g_1$ is given by \eqref{eq:OUrep}. We have
\begin{lemma}\label{lem:approximationQuality}
	Let $f \in \pspace{}(\R^{nd})$ and $l\in \mathbb{N}$. Then for any $I \subset \cbr{1,2,\ldots, n}$ the function $\hat{f}_I$ is smooth with respect to coordinates in $I$.  For any multi-index $\Lambda = (i_1,\ldots,i_l) \subset \{1,\ldots,nd\}^l$ such that  $\{\lceil i_j /d\rceil \colon j = 1,\ldots,l\} \subset I$ we have
	\[
	    \norm{  \frac{\partial^{|\Lambda|} \hat{f}_{I}}{\partial x_{\Lambda}}  }{\pspace{}}{} \leq C \norm{f}{\pspace{} }{},
	\]
	where $C>0$ depends only on $\sigma,\mu, d,l,n$. Moreover, when $f$ is canonical, so is $\hat{f}_{I}$.
\end{lemma}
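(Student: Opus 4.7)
The plan is to recognise $\hat{f}_I$ as the result of applying the Ornstein-Uhlenbeck semigroup at time $t=1$ in each of the coordinates $x_i$ with $i \in I$; indeed, comparing \eqref{eq:smoothing} with \eqref{eq:OUrep}, one sees that $\hat{f}_I(x_1,\ldots,x_n) = \bigl(\otimes_{i\in I}\T{1}\bigr) f\,(x_1,\ldots,x_n)$, where the tensor product semigroup acts only on the coordinates indexed by $I$ and the remaining coordinates are frozen. All three claims will be derived from this observation.

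For smoothness and the derivative bound, I would differentiate under the integral sign. Because $g_1$ is a Schwartz function and $f$ has polynomial growth (it lies in $\pspace{}(\R^{nd})$), the standard dominated convergence argument justifies differentiation to arbitrary order in the variables $x_i$, $i\in I$. Each derivative $\partial/\partial x_{i,j}$ applied to $g_1(x_ie^{-\mu}-y_i)$ brings out a factor $e^{-\mu}$ together with a polynomial $P_{i,j}(x_ie^{-\mu}-y_i)$ times $g_1(x_ie^{-\mu}-y_i)$. Consequently, for any admissible $\Lambda$ of length $l$,
\begin{equation*}
\frac{\partial^{|\Lambda|}\hat{f}_I}{\partial x_\Lambda}(x) = e^{-\mu l}\int f(z_1,\ldots,z_n)\prod_{i\in I} P_i(x_ie^{-\mu}-y_i)\,g_1(x_ie^{-\mu}-y_i)\dy_i,
\end{equation*}
for certain polynomials $P_i$ (with $P_i\equiv 1$ when no derivative falls on the $i$-th block). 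The estimate will then rest on three elementary facts: (i) $|f(z)| \le \|f\|_{\pspace{}}\exp\bigl(\sum_{i,j}|z_{i,j}|\bigr)$; (ii) $n(x)$ kills the factors $\exp\bigl(\sum_j|x_{i,j}|\bigr)$ produced by the coordinates $i\notin I$ (where $z_i=x_i$); (iii) after the substitution $u_i = x_ie^{-\mu}-y_i$ in the integrals over $i\in I$, one has $\exp\bigl(\sum_j|y_{i,j}|\bigr) \le \exp\bigl(e^{-\mu}\sum_j|x_{i,j}|\bigr)\exp\bigl(\sum_j|u_{i,j}|\bigr)$, and the remaining $u_i$-integral of a polynomial against an exponential-times-Gaussian is a finite constant depending only on $\sigma,\mu,d,l,n$. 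Collecting these yields
\begin{equation*}
n(x)\Bigl|\tfrac{\partial^{|\Lambda|}\hat{f}_I}{\partial x_\Lambda}(x)\Bigr| \le C\,\|f\|_{\pspace{}}\prod_{i\in I}\exp\!\Bigl(-(1-e^{-\mu})\sum_j|x_{i,j}|\Bigr) \le C\,\|f\|_{\pspace{}},
\end{equation*}
which gives the desired bound.

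For the preservation of canonicality, I would use the stationarity of $\T{}$ with respect to $\varphi$, i.e.\ $\int \T{1}h(x)\varphi(dx)=\int h\varphi$. Fixing some $k\in\{1,\ldots,n\}$ and integrating $\hat{f}_I(x)\varphi(dx_k)$, there are two cases. If $k\notin I$, Fubini lets one first integrate $\int f(z_1,\ldots,z_n)\varphi(dx_k)$, which is zero by canonicality of $f$ in the $k$-th variable (recall $z_k=x_k$ in this case). If $k\in I$, one writes $\hat{f}_I$ as $\T{1}$ applied to the $k$-th coordinate of the function $\bigl(\otimes_{i\in I\setminus\{k\}}\T{1}\bigr)f$ with the $x_j$, $j\notin I$, frozen, and invokes stationarity to reduce $\int \hat{f}_I\varphi(dx_k)$ to $\bigl(\otimes_{i\in I\setminus\{k\}}\T{1}\bigr)\bigl[\int f\varphi(dx_k)\bigr]$, which is again zero by canonicality of $f$.

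The only part that requires real care is the derivative bound: one has to choose the weight function $n$ so that it beats the exponential growth picked up from $|f(z)|$ precisely on the variables $i\notin I$, and compensate the contribution from $i\in I$ by the Gaussian in $g_1$; the admissibility assumption $\{\lceil i_j/d\rceil\}\subset I$ is exactly what makes this work, since it guarantees that the polynomial factors $P_i$ sit against the Gaussian kernels, not against $f$.
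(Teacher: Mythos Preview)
Your proof is correct and follows essentially the same route as the paper's: differentiate under the integral sign so that all derivatives fall on the Gaussian factor, bound $|f(z)|\le \|f\|_{\pspace{}}\,n(z)^{-1}$, and then control the resulting integral using Gaussian integrability; for canonicality, the two cases $k\notin I$ and $k\in I$ are handled exactly as in the paper via Fubini and the stationarity of $\varphi$ under $\T{1}$. The only difference is that you make the substitution $u_i = x_ie^{-\mu}-y_i$ and the resulting exponential gain $\exp\bigl(-(1-e^{-\mu})\sum_j|x_{i,j}|\bigr)$ explicit, where the paper simply writes ``by the properties of the Gaussian density $g_1$ and easy calculations''.
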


\begin{proof}
	Let us fix some $I$ and $\Lambda$. Using \eqref{eq:smoothing} we get
\[
	 K:=   \frac{\partial^{|\Lambda|} \hat{f}_{I}}{\partial x_{\Lambda}}   =  \int_{\R^{kd}} f(z_1, z_2, \ldots, z_n)  \frac{\partial^{|\Lambda|} }{\partial x_{\Lambda}} \prod_{i\in I} g_1(x_{i} e^{-\mu}-y_{i})  \dd{y}_{i}.
\]
Therefore by the properties of Gaussian density $g_1$ and easy calculations we arrive at
\[
	|K| \leq \norm{f}{\pspace{}}{ } \int_{\R^{kd}} n(z_1, z_2, \ldots, z_n)^{-1}  \left| \frac{\partial^{|\Lambda|} }{\partial x_{\Lambda}} \prod_{i\in I} g_1(x_{i} e^{-\mu}-y_{i})\right|  \dd{y}_{i} \leq C_{\Lambda} n(x_1, x_2, \ldots, x_n)^{-1},
\]
for some constant $C_{\Lambda}$.

To conclude it is enough to take the maximum over all admissible pairs $I,\Lambda$.

Let us now assume that $f$ is canonical. We would like to check that for any $j\in \cbr{1,2,\ldots, n}$ we have $$\int_{\R^{nd}} \hat{f}(x_1, x_2, \ldots, x_n) \varphi(x_j) \dd{x}_j=0.$$ There are two cases, the first when $j\notin I$. Then we have
\begin{multline*}
	\int_{\Rd} \hat{f}(x_1, x_2, \ldots, x_n) \varphi(x_j) \dd{x}_j = \int_{\R^d} \int_{\R^{kd}} f(z_1, z_2, \ldots, z_n)  \prod_{i\in I} g_1(x_{i} e^{-\mu}-y_{i})  \dd{y}_{i} \eq(z_j) \dd{z}_j \\= \int_{\R^{kd}} \rbr{\int_{\R^d}f(z_1, z_2, \ldots, z_n) \eq(z_j) \dd{z}_j}  \prod_{i\in I} g_1(x_{i} e^{-\mu}-y_{i})  \dd{y}_{i} =0.
\end{multline*}
The second case is when $j \in I$. Then
\begin{multline*}
	\int_{\Rd} \hat{f}(x_1, x_2, \ldots, x_n) \varphi(x_j) \dd{x}_j \\= \int_{\R^{kd}} f(z_1, z_2, \ldots, z_n)  \prod_{i\in I\setminus \cbr{j}}^k g_1(x_{i} e^{-\mu}-y_{i})  \dd{y}_{i} \rbr{\int_{\R^d} g_1(x_{j} e^{-\mu}-y_{i}) \eq(x_j)\dd{x}_j} \dd{y}_j  \\ = \int_{\R^{kd}} f(z_1, z_2, \ldots, z_n)  \prod_{i\in I\setminus \cbr{j}}^k g_1(x_{i} e^{-\mu}-y_{i})  \dd{y}_{i}  \eq(y_j) \dd{y}_j,
\end{multline*}
where the second equality holds by the fact that $\eq$ is the invariant measure of the Ornstein-Uhlenbeck process. Now the proof reduces to the first case.
\end{proof}

We will also need the following simple identity. We consider $\cbr{x_i}_{i=1,2,\ldots,n},\cbr{\tilde{x}_i}_{i=1,2,\ldots,n}$. By induction one easily checks (we slightly abuse the notation here, e.g. $\frac{\partial }{\partial y_i}$ denotes the derivative in direction $\tilde{x}_i - x_i$ and $\int_a^b$ the integral over the segment $[a,b] \subset \R^d$)
\begin{lemma}\label{lem:derivatives}
Let $f$ be a smooth function, then
    \begin{multline*}
        \sum_{(\epsilon_1,\epsilon_2, \ldots, \epsilon_n) \in \cbr{0,1}^n} (-1)^{\sum_{i=1}^n \epsilon_i}  f(\tilde{x}_1 + \epsilon_1 ({x}_1-\tilde{x}_1), \tilde{x}_2 + \epsilon_2 (x_2-\tilde{x}_2), \ldots, \tilde{x}_n + \epsilon_n (x_n-\tilde{x}_n) ) =\\
        \int_{x_1}^{\tilde{x}_1}\int_{x_2}^{\tilde{x}_2} \ldots \int_{x_n}^{\tilde{x}_n} \frac{\partial^n}{\partial y_1 \partial y_2 \ldots \partial y_n} f(y_1, y_2, \ldots, y_n) \dd{y_n} \dd{y_{n-1}} \ldots \dd{y_1}.
    \end{multline*}
\end{lemma}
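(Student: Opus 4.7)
The identity is the multi-dimensional mixed-partial analogue of the fundamental theorem of calculus, so the natural approach is induction on $n$. The base case $n=1$ is immediate: the sum collapses to $f(\tilde{x}_1)-f(x_1)$, which by the one-dimensional FTC along the segment equals $\int_{x_1}^{\tilde{x}_1}\partial_{y_1}f\,\dd{y_1}$ under the author's convention that $\partial/\partial y_1$ denotes the derivative in the direction $\tilde{x}_1-x_1$.

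For the inductive step I would split the sum over $(\epsilon_1,\ldots,\epsilon_n)\in\{0,1\}^n$ according to the value of $\epsilon_n$. The subsum with $\epsilon_n=0$ is, up to the overall sign $+1$, the left-hand side of the $(n-1)$-dimensional instance applied to the function $(x_1,\ldots,x_{n-1})\mapsto f(x_1,\ldots,x_{n-1},\tilde{x}_n)$; the subsum with $\epsilon_n=1$ is the same expression with $\tilde{x}_n$ replaced by $x_n$ and an overall sign $-1$. Applying the inductive hypothesis to each piece and subtracting yields
\[
\int_{x_1}^{\tilde{x}_1}\cdots\int_{x_{n-1}}^{\tilde{x}_{n-1}}\frac{\partial^{n-1}}{\partial y_1\cdots\partial y_{n-1}}\bigl[f(y_1,\ldots,y_{n-1},\tilde{x}_n)-f(y_1,\ldots,y_{n-1},x_n)\bigr]\dd{y_{n-1}}\cdots\dd{y_1}.
\]
A final one-dimensional FTC in the $n$-th variable rewrites the bracket as $\int_{x_n}^{\tilde{x}_n}\partial_{y_n}f\,\dd{y_n}$; the smoothness of $f$ legitimises pulling this inner integral past the preceding derivatives and integrals via Fubini, producing the claimed $n$-fold iterated integral.

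Conceptually the whole argument is the operator identity $\prod_{i=1}^{n}\Delta_i=\prod_{i=1}^{n}\int_{x_i}^{\tilde{x}_i}\partial_{y_i}\,\dd{y_i}$, where $\Delta_i$ is the finite-difference operator sending a function $g$ of $n$ vector variables to $g(\ldots,\tilde{x}_i,\ldots)-g(\ldots,x_i,\ldots)$ with only the $i$-th slot toggled. The $\Delta_i$ commute trivially because they act on distinct coordinates, and the one-dimensional FTC supplies the integral representation of each factor. There is no genuine obstacle: the lemma is a purely formal book-keeping identity. The only point requiring care is the sign convention — the author's choice of derivative direction $\tilde{x}_i-x_i$ is precisely what makes $\int_{x_i}^{\tilde{x}_i}\partial_{y_i}g\,\dd{y_i}=g(\tilde{x}_i)-g(x_i)$, so that no global sign factor appears in front of the right-hand side.
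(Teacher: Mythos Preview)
Your proof is correct and follows exactly the approach the paper indicates: the authors simply state ``By induction one easily checks'' without giving further details, and your argument is precisely that induction carried out explicitly.
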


\subsubsection{Approximation of $U$-statistics}

\paragraph{Bookkeeping of trees} So far we have shown that one can approximate functions in $\pspace{}$ by linear combinations of tensor products. Our next goal is to show that two functions which are close in $\pspace{}$ generate $U$-statistics which (after proper normalization, specific for each regime) are close in distribution. To this end we will use the ``bookkeeping of trees'' technique (see e.g. \cite{Birkner:2007aa} or \cite[Section 2]{Dynkin:1988uq}), which via some combinatorics and introduction of auxiliary branching processes will allow us to pass from equations on the Laplace transform in the case of $n=1$ to estimates of moments of $V$-statistics and consequently $U$-statistics.

We recall \eqref{eq:vstat}. Let $f_1,f_2, \ldots, f_n \in \cspace{}(\R^d)$ and $f_i \geq 0$. We would like to calculate
\begin{equation}
	 \ev{}_x V^n_t(\otimes_{i=1}^n f_i) = \ev{}_x \rbr{\prod_{i=1}^n \ddp{X_t}{f_i}}. \label{eq:vstatisticMoments}
\end{equation}
Let $\Lambda \subset \cbr{1,2,\ldots, n}$, slightly abusing notation we denote
\[
    w_\Lambda(x,t,\alpha) := \ev{}_x\rbr{\exp \cbr{ - \sum_{i\in \Lambda}  \alpha_i \ddp{X_t}{f_i} }}, \quad w_\Lambda(x,t) := \left.  \frac{\partial^{|\Lambda|} }{\partial \alpha_\Lambda} w(x,t,\alpha)\right|_{\alpha =0}.
\]
Note that this differentiation is valid by Fact \ref{fact:aaa} and properties of the Laplace transform (e.g. \cite[Chapter XIII.2]{Feller:1971cr}). By the calculations from Section 4.2. in \cite{Adamczak:2011kx} we know that
\[
     w_\Lambda(x,t,\alpha) = \T{t} e^{- \sum_{i\in \Lambda}  \alpha_i f_i }(x) + \lambda \intc{t} \T{t-s} \sbr{p w^2_\Lambda(\cdot, s, \alpha) - w_\Lambda(\cdot, s, \alpha) + (1-p)} \dd{s}.
\]
It is easy to check that
\begin{equation}
	v_\Lambda(x,t) := \ev{}_x \rbr{\prod_{i\in \Lambda} \ddp{X_t}{f_i}} = (-1)^{|\Lambda|} w_\Lambda(x,t).	\label{eq:tmp945}
\end{equation}
Assume that $|\Lambda|>0$. We denote by $P_1(\Lambda)$ all pairs $(\Lambda_1, \Lambda_2)$ such that $\Lambda_1 \cup \Lambda_2 = \Lambda$ and $\Lambda_1 \cap \Lambda_2 =\emptyset$, and by $P_2(\Lambda)\subset P_1(\Lambda)$ pairs with an additional restriction that $\Lambda_1 \neq \emptyset$ and $ \Lambda_2 \neq \emptyset$.   We easily check that
\begin{multline*}
    \frac{\partial^{|\Lambda|} }{\partial \alpha_\Lambda} w_\Lambda (x,t,\alpha) = (-1)^{|\Lambda|}\T{t} \rbr{\prod_{i\in\Lambda } f_i \:e^{- \sum_{i\in \Lambda}  \alpha_i f_i }}(x) \\+ \lambda \intc{t} \T{t-s} \sbr{p \sum_{ (\Lambda_1, \Lambda_2) \in P_1(\Lambda) } \frac{\partial^{|\Lambda_1|} }{\partial \alpha_{\Lambda_1}} w_\Lambda(\cdot, s, \alpha) \frac{\partial^{|\Lambda_2|} }{\partial \alpha_{\Lambda_2}} w_\Lambda(\cdot, s, \alpha) - \frac{\partial^{|\Lambda|} }{\partial \alpha_\Lambda} w_\Lambda(\cdot, s, \alpha)}(x) \dd{s}.
\end{multline*}
We evaluate it at $\alpha =0$, (let us notice that $\frac{\partial^{|\Lambda_1|} }{\partial \alpha_{\Lambda_1}} w_\Lambda(x, s, 0) = \frac{\partial^{|\Lambda_1|} }{\partial \alpha_{\Lambda_1}} w_{\Lambda_1}(x, s, 0) = w_{\Lambda_1}(x,s)$), multiply both sides by $(-1)^{|\Lambda|}$ and use the definition of $v_\Lambda(x,t)$ to get
\begin{align*}
    v_\Lambda(x,t) &=  \T{t} \rbr{\prod_{i\in\Lambda } f_i }(x) + \lambda \intc{t} \T{t-s} \sbr{p \sum_{(\Lambda_1, \Lambda_2) \in P_1(\Lambda)  }  v_{\Lambda_1}(\cdot, s) v_{\Lambda_2}(\cdot, s) - v_\Lambda(\cdot, s)}(x) \dd{s}\\
    &=  \T{t} \rbr{\prod_{i\in\Lambda } f_i }(x) + \lambda \intc{t} \T{t-s} \sbr{p \sum_{ (\Lambda_1, \Lambda_2) \in P_2(\Lambda)  }  v_{\Lambda_1}(\cdot, s) v_{\Lambda_2}(\cdot, s) +(2p -1) v_\Lambda(\cdot, s)}(x) \dd{s}.
\end{align*}
This can be easily transformed to (recall that $\T{s}^{\lambda_p} f(x) = e^{\lambda_p s} \T{s}f(x)$)
\begin{equation}
        v_\Lambda(x,t) =  \T{t}^{\lambda_p} \rbr{\prod_{i\in\Lambda } f_i }(x) + p\lambda \intc{t}\T{t-s}^{\lambda_p} \sbr{ \sum_{(\Lambda_1, \Lambda_2) \in P_2(\Lambda)  }  v_{\Lambda_1}(\cdot, s) v_{\Lambda_2}(\cdot, s) }(x) \dd{s}.\label{eq:trees}
\end{equation}
The last formula is much easier to handle if written in terms of auxiliary branching processes. Firstly, we introduce the following notation.
For $n \in \mathbb{N} \setminus \cbr{0}$ we denote by $\tree{n}$ the set of rooted trees described below. The root has a single offspring. All inner vertices (we exclude the root and the leaves) have exactly two offspring. For $\tau \in \tree{n}$, by $l(\tau)$ we denote the set of its leaves. Each leaf $l\in l(\tau)$ is assigned a label, denoted by $\lab(l)$, which is a non-empty subset of $\cbr{1,2,\ldots,n}$, the labels fulfil two conditions:
\[
     \bigcup_{l\in l(\tau)} \lab(l) = \cbr{1,2,\ldots,n}, \quad  \forall_{l_1, l_2 \in l(\tau)}  \rbr{l_1\neq l_2 \implies  \lab(l_1)\cap \lab(l_2) = \emptyset}.
\]
In other words, the labels form a partition of $\cbr{1,2,\ldots,n}$. For a given $\tau \in \tree{n}$ let $i(\tau)$ denote the set of inner vertices (we exclude the root and the leaves), clearly $|i(\tau)| = |l(\tau)|-1$ (as usual $|\cdot|$ denotes the cardinality). Let us identify the vertices of $\tau$ with $\cbr{0,1,2,\ldots, |\tau|-1}$ in such way, that for any vertex $i$ its parent, denoted by $p(i)$, is smaller. Obviously, this implies that $0$ is the root and that the inner vertices have numbers in set $\cbr{1, 2, \ldots, }$. We denote also
\begin{equation}
	s(\tau) := \cbr{l\in l(\tau) : |\lab(l)|=1},\quad m(\tau) := \cbr{l\in l(\tau) : |\lab(l)|>1}, \label{eq:singleMultipleLeaves}
\end{equation}
leaves with single and multiple labels respectively.

    Let $\tau \in \tree{n}$, we consider an Ornstein-Uhlenbeck branching walk on $\tau$ as follows (in this part we ignore the labels). Let us fix $t \in \R_+$ and $\cbr{t_i}_{i \in i(\tau)}$. The initial particle is placed at time $0$ at location $x$, it evolves up to the time $t-t_1$ and splits into two offspring, the first one is associated with the left branch of vertex $1$ in tree $\tau$ and the second one with the right branch. Further each of them evolves until time $t-t_i$, where $i$ is the first vertex in the corresponding subtree, when it splits and so on. At time $t$ the particles are stopped and their positions are denoted by $\cbr{Y_i}_{i\in l(\tau)}$ (the number of particles at the end is equal to the number of leaves). The construction makes sense provided that $t_i \le t$ and $t_i \le t_{p(i)}$ for all $i \in i(\tau)$. We define
\begin{equation}
	   OU\rbr{\otimes_{i=1}^n f_i, \tau, t, \cbr{t_i}_{i \in i(\tau)},x} := \ev{} \rbr{ \prod_{a=1}^n f_a(Y_{j(a)})},\label{eq:ouDefinition}
\end{equation}
    where $j(a) = l \in l(\tau)$ is the unique leaf such that $a\in \lab(l)$. We also define
    \begin{equation}
        S(\tau, t,x) := (p\lambda)^{|i(\tau)|} e^{\lambda_p t} \rbr{ \prod_{i \in i(\tau)} \intc{t}  \dd{t_{i}}  }  \rbr{\prod_{i \in i(\tau)}e^{\lambda_p t_i} 1_{t_{i} \leq t_{p(i)}}} OU \rbr{\otimes_{i=1}^n f_i, \tau, t, \cbr{t_i}_{i \in i(\tau)},x}.  \label{eq:one-tree}
    \end{equation}

\begin{fact} \label{fact:trees-decomposition} Let $\Lambda_n = \cbr{1,2,\ldots,n}$. The following identity holds
    \begin{equation}
        v_{\Lambda_n}(x,t) = \sum_{\tau \in \tree{n}} S(\tau, t,x). \label{eq:coreofthelore}
    \end{equation}
\end{fact}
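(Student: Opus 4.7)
The plan is to prove the identity by induction on $n$, using the integral equation \eqref{eq:trees} as the engine. The key observation is that the tree class $\tree{n}$ admits an analogous recursive decomposition: any $\tau\in\tree{n}$ either consists of the root attached directly to a single leaf carrying the full label $\{1,\ldots,n\}$ (the \emph{degenerate tree}), or the unique offspring of the root is an inner vertex (call it vertex $1$) whose two children generate subtrees which, after an obvious relabelling, lie in $\tree{|\Lambda_1|}$ and $\tree{|\Lambda_2|}$ for some ordered pair $(\Lambda_1,\Lambda_2)\in P_2(\{1,\ldots,n\})$. This dichotomy mirrors precisely the two terms on the right-hand side of \eqref{eq:trees}.

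For the base case $n=1$ the set $P_2(\{1\})$ is empty and $\tree{1}$ contains only the single tree with no inner vertex, so both sides of \eqref{eq:coreofthelore} reduce to $\T{t}^{\lambda_p}f_1(x)$. For the inductive step, the first summand of \eqref{eq:trees} equals $S(\tau_0,t,x)$ where $\tau_0$ is the degenerate tree with single leaf labelled $\{1,\ldots,n\}$. For the second summand I would substitute the inductive hypothesis into each $v_{\Lambda_j}(\cdot,s)$ and then invoke the Markov property of the OU-process together with the independence of offspring evolutions: the product $v_{\Lambda_1}(y,s)v_{\Lambda_2}(y,s)$ is the joint expectation of two independent OU branching walks started from the common position $y$, and $\T{t-s}^{\lambda_p}$ averages over the position at time $t-s$ of their common ancestor. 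The integration variable $s$ plays the role of the branching time $t_1$ at the newly-introduced top inner vertex, and the constraints $t_j\leq t_{p(j)}$ valid inside each subtree glue together with this at the interface: the topmost inner vertices of $\tau_L,\tau_R$ get vertex $1$ as parent in $\tau$, and the inequality $t_j\leq s=t_1$ is already present as the upper integration limit produced by the inductive step (or equivalently, extending the integration to $[0,t]$ and reinstating the indicator $1_{t_j\le t_1}$ gives the same thing).

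The main (though essentially routine) obstacle is the matching of multiplicative constants across the recursion. One must check: the fresh factor $p\lambda$ absorbs the new inner vertex $1$, so that $(p\lambda)^{|i(\tau_L)|}(p\lambda)^{|i(\tau_R)|}\cdot p\lambda=(p\lambda)^{|i(\tau)|}$; the exponentials $e^{\lambda_p s}$ from each of $v_{\Lambda_1},v_{\Lambda_2}$ combined with $e^{\lambda_p(t-s)}$ from $\T{t-s}^{\lambda_p}$ produce $e^{\lambda_p t}\cdot e^{\lambda_p t_1}$, the latter being exactly the factor attributed to vertex $1$ in \eqref{eq:one-tree}; and the sum over ordered pairs $(\Lambda_1,\Lambda_2)\in P_2(\{1,\ldots,n\})$ together with trees $\tau_L\in\tree{|\Lambda_1|},\tau_R\in\tree{|\Lambda_2|}$ is in bijection with the non-degenerate elements of $\tree{n}$, the ordering of the pair recording which subtree is attached to the left child of vertex $1$ (so no combinatorial factor of $1/2$ appears). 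Adding the degenerate contribution $S(\tau_0,t,x)$ then assembles the full sum $\sum_{\tau\in\tree{n}}S(\tau,t,x)$ and closes the induction.
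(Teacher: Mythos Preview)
Your proposal is correct and matches the paper's own argument essentially line for line: the paper also proceeds by induction on the cardinality of the label set, identifies the first term of \eqref{eq:trees} with the degenerate single-leaf tree, substitutes the inductive hypothesis into $v_{\Lambda_1}v_{\Lambda_2}$, invokes the Markov property to glue the two sub-walks under a new top inner vertex, and observes that $(\Lambda_1,\Lambda_2,\tau_L,\tau_R)\leftrightarrow\tau$ is a bijection onto $\tree{n}\setminus\{\tau_0\}$. The only cosmetic difference is that the paper introduces the auxiliary notation $\tree{}(\Lambda)$ for trees labelled by an arbitrary subset $\Lambda$, whereas you achieve the same effect via the ``obvious relabelling'' into $\tree{|\Lambda_j|}$.
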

\begin{proof} The claim is a consequence of the identity $v_{\Lambda}(x,t) = \sum_{\tau \in \tree{}(\Lambda)} S(\tau, t, x)$, where $\Lambda \subset \cbr{1,2,\ldots,n}$ and $\tree{}(\Lambda)$ is the set of trees, as $\tree{|\Lambda|}$, with the exception that the labels are in the set $\Lambda$. This identity in turn will follow by induction with respect to the cardinality of $\Lambda$. For $\Lambda =\cbr{i}$ equation \eqref{eq:trees} reads as  $v_{\Lambda}(x,t) = e^{\lambda_p t}\T{t} f_i(x)$. The space $\tree{}(\Lambda)$ contains only one tree, denoted by $\tau_s$, consisting of the root and a single leaf labelled by $i$. We have $i(\tau) = \emptyset$ and obviously $S(\tau_s, t,x) = e^{\lambda_p t}\T{t} f_i(x)$. Let now $|\Lambda| = k >1$. Similarly as before the first term of \eqref{eq:trees} corresponds to $\tau_s$. By induction the second term can be written as
    \[
        e^{\lambda_p t} p\lambda \intc{t}e^{-\lambda_p t_1}\T{t-t_1} \sbr{ \sum_{(\Lambda_1, \Lambda_2) \in P_2(\Lambda)  }  \sum_{\tau_1 \in \tree{}(\Lambda_1)} \sum_{\tau_2 \in \tree{}(\Lambda_2)} S(\tau_1,t_1,\cdot)  S(\tau_2,t_1,\cdot) }(x) \dd{t}_1.
    \]
     We have
    \begin{multline*}
    \sum_{(\Lambda_1, \Lambda_2) \in P_2(\Lambda)  }  \sum_{\tau_1 \in \tree{}(\Lambda_1)} \sum_{\tau_2 \in \tree{}(\Lambda_2)} e^{\lambda_p t} p\lambda \intc{t}\intr e^{-\lambda_p t_1} p(t - t_1, x,y)  \Big[  \\
        (p\lambda)^{|i(\tau_1)|} e^{\lambda_p t_1} \rbr{ \prod_{i \in i(\tau_1)} \intc{t_1}  \dd{t_{i}}  }  \rbr{\prod_{i \in i(\tau_1)}e^{\lambda_p t_i} 1_{t_{i} \leq t_{p(i)}}} OU \rbr{\otimes_{i=1}^n f_i, \tau_1, t_1, \cbr{t_i}_{i \in i(\tau_1)},y}\\
                (p\lambda)^{|i(\tau_2)|} e^{\lambda_p t_1} \rbr{ \prod_{i \in i(\tau_2)} \intc{t_1}  \dd{t_{i}}  }  \rbr{\prod_{i \in i(\tau_2)}e^{\lambda_p t_i} 1_{t_{i} \leq t_{p(i)}}} OU \rbr{\otimes_{i=1}^n f_i, \tau_2, t_1, \cbr{t_i}_{i \in i(\tau_2)},y}\Big] \dd{y} \dd{t}_1,
    \end{multline*}
    where $p$ is the transition density of the Ornstein-Uhlenbeck process. Now we create a new tree $\tau$ by setting $\tau_1$ and $\tau_2$ to be descendants of the vertex  born at time $t-t_1$. We keep labels and split times unchanged and assume that the first particles of $OU$ processes on $\tau_1$ and $\tau_2$ are put at $x$. Thus by the Markov property of the Ornstein-Uhlenbeck process we can identify the branching random walk on $\tau_1$ and $\tau_2$ with the branching random walk on $\tau$. It is also easy to check that the described correspondence is a bijection from set of pairs $(\tau_1, \tau_2)$ (as in the sum above) to ${\tree{n}\setminus \cbr{\tau_s}}$ and therefore the expression above is equal to the sum $\sum_{\tree{n}\setminus \cbr{\tau_s}} S(\tau, t,x)$.
\end{proof}
The calculations will be more tractable when we derive an explicit formula for $\cbr{Y_i}_{i\in l(\tau)}$. Let us recall the notation introduced in \eqref{eq:semi-group} and consider a family of independent random variables $\cbr{G_i}_{i\in \tau}$, such that $G_i \sim \eq$ for $i\neq 0$ and $G_0 \sim \delta_x$. Recall also that $ou(t) = \sqrt{1 - e^{-2\mu t}}$. The following fact
follows easily from the construction of the branching walk on $\tau$ and (\ref{eq:semi-group}).
\begin{fact} \label{fact:haha}
	Let $\cbr{Y_i}_{i \in l(\tau)}$ be positions of particles at time $t$ of the Ornstein-Uhlenbeck process on tree $\tau$ with labels $\cbr{t_i}_{t\in i(\tau)}$. We have
	\[
        \cbr{Y_i}_{i \in l(\tau)} =^d \cbr{Z_i}_{i \in l(\tau)},
    \]
	where for any $i\in l(\tau)$ we put
    \[
        Z_i:= \sum_{l \in P(i)} ou(t_{p(l)} - t_l) G_l e^{-\mu t_l} + ou(t_{p(i)})G_i,
    \]
    where $P(i) := \cbr{\text{predecessors of } i}$, by convention we set $t_0 = t$ and $ou(t_{p(0)} - t_0) = 1$.
\end{fact}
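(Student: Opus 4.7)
The plan is to prove Fact \ref{fact:haha} by iterating the one-step representation \eqref{eq:semi-group} along each root-to-leaf path in $\tau$, while using the Markov property of the Ornstein--Uhlenbeck process at each branching vertex to account for the joint law.

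First I would fix a leaf $i\in l(\tau)$ and list its ancestor path $0 = u_0, u_1,\ldots, u_{k-1}, u_k = i$, where $u_1,\ldots, u_{k-1}\in i(\tau)$. By the construction of the walk on $\tau$, the particle associated with this path visits vertex $u_j$ at calendar time $t-t_{u_j}$ (with the conventions $t_{u_0}=t$ and $t_{u_k}=0$), so the segment between $u_{j-1}$ and $u_j$ has duration $t_{u_{j-1}}-t_{u_j}$. By \eqref{eq:semi-group}, if $Y_{u_{j-1}}$ denotes the position at $u_{j-1}$, then
\[
Y_{u_j} = Y_{u_{j-1}}\, e^{-\mu(t_{u_{j-1}}-t_{u_j})} + ou(t_{u_{j-1}}-t_{u_j})\,G_{u_j},
\]
where $G_{u_j}\sim\eq$ is a fresh noise on that segment, independent of everything up to the arrival at $u_{j-1}$. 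Starting from $Y_{u_0}=x$ and unrolling this linear recursion telescopically produces
\[
Y_i = x\,e^{-\mu t} + \sum_{j=1}^{k-1} ou(t_{u_{j-1}}-t_{u_j})\,e^{-\mu t_{u_j}}\,G_{u_j} + ou(t_{p(i)})\,G_i,
\]
which, after invoking the conventions $t_0=t$, $G_0\sim\delta_x$, and $ou(t_{p(0)}-t_0)=1$ to absorb the $x\,e^{-\mu t}$ into the $l=0$ summand, is exactly the formula defining $Z_i$.

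For the joint distribution of $\{Y_i\}_{i\in l(\tau)}$ the point is that the Gaussian $G_u$ in the recursion above is naturally attached to the segment $p(u)\to u$, and is therefore reused by every leaf whose path passes through $u$. At each inner vertex the two offspring evolve independently conditionally on the past, by the strong Markov property of the Ornstein--Uhlenbeck semigroup, so noises belonging to disjoint subtrees remain independent. This is precisely the coupling built into $\{Z_i\}_{i\in l(\tau)}$, where one single copy of $G_l$ is attached to each vertex and appears in $Z_i$ exactly for the descendants of $l$.

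There is no real analytic difficulty here; the argument is essentially a routine unfolding of the transition kernel. The only step requiring care is the bookkeeping, in particular keeping straight the distinction between $t_i$ (time-to-go from vertex $i$ up to the horizon $t$) and the calendar time $t-t_i$, and checking the end-of-path conventions at $l=0$ and at the leaves, which together make the seemingly asymmetric expression for $Z_i$ collapse to the expected telescoping sum.
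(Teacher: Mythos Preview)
Your argument is correct and is precisely the approach the paper has in mind: the paper does not give a detailed proof but simply states that the fact ``follows easily from the construction of the branching walk on $\tau$ and (\ref{eq:semi-group})'', and your iteration of the one-step representation along root-to-leaf paths, with independence at branch points coming from the Markov property, is exactly this.
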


We are now ready to prove an extended version of Fact \ref{fact:trees-decomposition}.

\begin{fact} \label{fact:f1}
    Let $\cbr{X_t}_{t\geq 0}$ be the OU branching system starting from $x\in\Rd$ and $f\in \pspace{}(\R^{nd}) $ then 
    \begin{equation}
        \ev{} \:\sum_{i_1, i_2, \ldots, i_n =1}^{|X_t|}\: f(X_t(i_1), X_t(i_2), \ldots, X_t(i_n)) = \sum_{\tau \in \tree{n}} S(\tau, t,x), \label{eq:moment-formula2}
    \end{equation}
    where in \eqref{eq:one-tree} we extend the definition of $OU$ in \eqref{eq:ouDefinition} by putting
    \[
            OU(f, \tau, t, \cbr{t_i}_{i \in i(\tau)},x) := \ev{} { f(Y_{j(1)},Y_{j(2)}, \ldots, Y_{j(n)})}.
    \]
    Moreover all the quantities above are finite.
\end{fact}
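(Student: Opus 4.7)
The plan is a linearity-plus-density extension of Fact \ref{fact:trees-decomposition}. That fact gives the tree-decomposition formula in the special case $f = \otimes_{i=1}^{n} f_i$ with $f_i \in \cspace{}(\R^d)$, $f_i \geq 0$, via the identification $v_{\Lambda_n}(x,t) = \ev{}_x \prod_{i=1}^n \ddp{X_t}{f_i} = \ev{}_x V^n_t(\otimes_i f_i)$. I would first establish a uniform $\pspace{}$-bound dominating both sides of \eqref{eq:moment-formula2}, then extend the identity by multilinearity to $span(A)$, and finally pass to general $f \in \pspace{}(\R^{nd})$ via the density used in the proof of Lemma \ref{lem:approx2}.

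For the envelope, set $e(x) := \exp(\sum_{j=1}^d |x_j|)$ and $E := e^{\otimes n}$, so that $|g(x_1,\ldots,x_n)| \leq \norm{g}{\pspace{}}{} E(x_1,\ldots,x_n)$ for every $g \in \pspace{}(\R^{nd})$ by the definition of $\norm{\cdot}{\pspace{}}{}$. Applying Fact \ref{fact:trees-decomposition} to truncations $e_M := (e \wedge M)\chi_{B_M} \in \cspace{}(\R^d)$ and letting $M \to \infty$ by monotone convergence on both sides yields
\begin{equation*}
\ev{}_x V^n_t(E) \;=\; \sum_{\tau \in \tree{n}} S(\tau,t,x),
\end{equation*}
where on the right we use the extended definition of $OU$ with kernel $E$. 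This is finite: by Fact \ref{fact:haha} each $Y_i$ is (in distribution) a Gaussian vector whose mean depends on $x$ and whose covariance is bounded uniformly in $\{t_i\} \in [0,t]^{|i(\tau)|}$, hence $\ev{} E(Y_{j(1)},\ldots,Y_{j(n)}) \leq C(t,x,n,\mu,\sigma,d)$; the $t_i$-integrals are over a bounded region, $|\tree{n}| < \infty$, and $e^{\lambda_p t}$ is finite. This simultaneously gives the envelope $\ev{}_x V^n_t(|g|) \leq \norm{g}{\pspace{}}{} \ev{}_x V^n_t(E)$ on the left-hand side of \eqref{eq:moment-formula2} and, via $|OU(g,\tau,t,\cdot,x)| \leq \norm{g}{\pspace{}}{} OU(E,\tau,t,\cdot,x)$ in the extended $OU$ definition, a matching envelope on the right-hand side. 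In particular both sides of \eqref{eq:moment-formula2} are well-defined, finite, and bounded linear functionals of $f \in \pspace{}(\R^{nd})$.

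With the envelope in hand, the identity extends routinely. It is already known for non-negative tensor products with factors in $\cspace{}(\R^d)$. Both sides are multilinear in $(f_1,\ldots,f_n)$, the right-hand side through the $OU$ functional \eqref{eq:ouDefinition}, so truncations $f_i^{(m)} := (f_i \wedge m)\chi_{B_m}$ together with the decomposition $f_i = f_i^+ - f_i^-$, combined with dominated convergence against the envelope, extend the identity to arbitrary tensors $\otimes_i f_i$ with $f_i \in \pspace{}(\R^d)$ and hence by linearity to all of $span(A)$. Finally, (the proof of) Lemma \ref{lem:approx2} gives that $span(A)$ is $\norm{\cdot}{\pspace{}}{}$-dense in $\pspace{}(\R^{nd})$, and bounded linearity of both sides on $\pspace{}(\R^{nd})$ transfers the identity to arbitrary $f$. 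The main obstacle is precisely the envelope step: the naive bound $\norm{E}{\infty}{}\ev{}_x |X_t|^n$ is infinite, so the finite estimate has to be obtained by running the tree formula of Fact \ref{fact:trees-decomposition} directly with $f = E$ and controlling the $OU$ expectations via the Gaussian representation in Fact \ref{fact:haha}; once this is done, multilinearity and density finish the argument.
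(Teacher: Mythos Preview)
Your approach is correct and follows essentially the same density/approximation strategy as the paper, with a slightly different packaging. The paper's sketch proceeds: (i) establish the formula for constant $f$, (ii) drop positivity on tensors, (iii) extend to $\cspace{}(\R^{nd})$ via Stone--Weierstrass and dominated convergence, (iv) pass to nonnegative $f \in \pspace{}$ by monotone convergence from below by compactly supported functions, using Fact~\ref{fact:haha} to ensure finiteness of the right-hand side, and (v) remove positivity. Your version replaces steps (iii)--(v) by a single envelope argument: you run the tree formula directly for the specific tensor $E = e^{\otimes n}$, obtain a finite dominating quantity on both sides, and then exploit bounded linearity in $\|\cdot\|_{\pspace{}}$ together with the density of $span(A)$ to close. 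This is marginally more streamlined, since the envelope simultaneously justifies all later limit passages; the paper's route, by contrast, never needs $\|\cdot\|_{\pspace{}}$-continuity and gets by with monotone convergence alone.

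One small correction: your truncation $e_M = (e \wedge M)\chi_{B_M}$ is not continuous at $\partial B_M$, so it is not in $\cspace{}(\R^d)$ as you claim, and Fact~\ref{fact:trees-decomposition} as stated requires factors in $\cspace{}$. Replace $\chi_{B_M}$ by a continuous cutoff supported in $B_{M+1}$ and equal to $1$ on $B_M$ (or simply take any sequence in $\cspace{}$ increasing pointwise to $e$); the monotone convergence step then goes through unchanged.
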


\begin{proof}(Sketch)
	Using Fact \ref{fact:trees-decomposition}, Fact \ref{fact:aaa}, \eqref{eq:vstatisticMoments} and the Lebesgue monotone convergence theorem together with linearity one may prove that \eqref{eq:moment-formula2} is valid for $f \equiv C$, $C>0$. Using standard tricks we may drop the positivity assumption in \eqref{eq:vstatisticMoments} and \eqref{eq:coreofthelore}. Therefore, by the Stone-Weierstrass theorem and the dominated Lebesgue theorem, \eqref{eq:moment-formula2} is valid for any $f\in \cspace{}(\R^{nd})$.
	Let now $f\in \pspace{}(\R^{nd})$, $f\geq 0$. We notice that for any $\tau \in \tree{n}$ the expression $OU(f, \tau, t, \cbr{t_i}_{i \in i(\tau)},x)$ is finite, which follows easily from Fact \ref{fact:haha}. Further, one can find a sequence $\cbr{f_k}$ such that $f_k \in \cspace{}(\Rd)$, $f_k\geq 0$ and $f_k\nearrow f$ (pointwise). Appealing to the monotone Lebesgue theorem yields that \eqref{eq:moment-formula2} still holds (and is finite). To conclude, once more we remove the positivity condition.
\end{proof}
As a simple corollary we obtain

\begin{cor} \label{cor:godsavethequeen}
	Let $\cbr{X_t}_{t\geq 0}$ be the OU branching system, then for any $n\geq 1$ there exists $C_n$ such that
	\[
		\ev{} |X_t|^n \leq C_n e^{n \lambda_p t}.
	\]
\end{cor}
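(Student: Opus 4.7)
The plan is to reduce the statement to a direct computation using Fact \ref{fact:f1} (the tree bookkeeping formula) applied to the constant kernel $f\equiv 1$. Note that the constant function $1$ lies in $\pspace{}(\R^{nd})$, so the fact applies, and the naive sum in \eqref{eq:moment-formula2} collapses to
\[
\sum_{i_1,i_2,\ldots,i_n=1}^{|X_t|} 1 = |X_t|^n,
\]
which yields $\ev{}|X_t|^n = \sum_{\tau\in\tree{n}} S(\tau,t,x)$. Since $f\equiv 1$ is unaffected by the Ornstein–Uhlenbeck branching walk, $OU(1,\tau,t,\cbr{t_i}_{i\in i(\tau)},x) = 1$ identically. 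Hence $S(\tau,t,x)$ reduces to the purely combinatorial/integral factor
\[
S(\tau,t,x) = (p\lambda)^{|i(\tau)|} e^{\lambda_p t} \prod_{i\in i(\tau)} \intc{t_{p(i)}} e^{\lambda_p t_i} \dt_i.
\]

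Next I would estimate this factor crudely. Each inner integral satisfies $\int_0^{t_{p(i)}} e^{\lambda_p s}\ds \leq \lambda_p^{-1} e^{\lambda_p t}$, so dropping the nesting constraints and multiplying gives
\[
S(\tau,t,x) \leq \rbr{\frac{p\lambda}{\lambda_p}}^{|i(\tau)|} e^{(|i(\tau)|+1)\lambda_p t}.
\]
By definition every tree $\tau\in\tree{n}$ has at most $n$ leaves (equality when each leaf carries a singleton label), hence $|i(\tau)| = |l(\tau)|-1 \leq n-1$. Therefore each summand is bounded by a constant (depending on $n$, $p$, $\lambda$) times $e^{n\lambda_p t}$.

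Finally, since the set $\tree{n}$ is finite with cardinality depending only on $n$, summing these bounds yields the desired inequality with
\[
C_n := |\tree{n}|\, \rbr{\frac{p\lambda}{\lambda_p}}^{n-1}.
\]
I do not anticipate any real obstacle here: the statement is a corollary precisely because the hard work has already been done in establishing Fact \ref{fact:f1}, and the only ingredient added is the trivial observation that when $f\equiv 1$ the random walk factor disappears and what remains is an elementary bound on iterated exponential integrals.
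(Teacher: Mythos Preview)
Your proof is correct and follows essentially the same approach as the paper: apply Fact \ref{fact:f1} with $f\equiv 1$, observe that $OU$ collapses to $1$, and use the inequality $|i(\tau)|\le n-1$ together with the crude bound $\int_0^t e^{\lambda_p s}\,\mathrm{d}s\le \lambda_p^{-1}e^{\lambda_p t}$ on each factor. The paper's proof is just a two-line sketch of the same argument; you have simply spelled out the ``easy to check'' part.
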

\begin{proof}
	We apply the above fact with $f=1$. Using definition \eqref{eq:one-tree} and the inequality $|i(\tau)| \le n-1$ for $\tau \in \tree{n}$, it is easy to check that for any $t\in \tree{n}$ we have $S(\tau, t, x)\leq C_\tau e^{n \lambda_p t}$, for a certain constant depending only on $\tau$.
\end{proof}

Let us recall notation of \eqref{eq:singleMultipleLeaves}. We have
\begin{fact}\label{fact:f3Better}
	For any $n\in \mathbb{N}$ there exists $C, c>0$, such that for any $\tau \in \tree{n}$ any $\cbr{t_i}_{i\in i(\tau)}$, split times as above, and any canonical $f\in \pspace{}(\R^{nd})$ we have
	\begin{equation}
		OU(f, \tau, t, \cbr{t_i}_{i \in i(\tau)},x) \leq \norm{f}{\pspace{}}{} \:C \exp\cbr{{c \norm{x}{}{}}}  \exp \Big(-\mu \sum_{i\in s(\tau)} t_{p(i)}\Big). \label{eq:multipledecay}
	\end{equation}
\end{fact}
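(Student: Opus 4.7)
The plan is to combine the explicit representation from Fact \ref{fact:haha} with a pointwise decay estimate for the Ornstein-Uhlenbeck semigroup on canonical functions; each single-label leaf $i \in s(\tau)$ will contribute a factor $e^{-\mu t_{p(i)}}$ coming from the integration of its independent ``leaf noise'' $G_i$.

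By Fact \ref{fact:haha},
\[
OU(f,\tau,t,\cbr{t_i},x) = \ev{f(Z_{j(1)},\ldots,Z_{j(n)})}, \qquad Z_i = A_i + ou(t_{p(i)}) G_i,
\]
where $A_i := \sum_{l \in P(i)} ou(t_{p(l)} - t_l) G_l e^{-\mu t_l}$ does not depend on $G_i$. For $i \in s(\tau)$, both $G_i$ and $Z_i$ are used only once in $f$, namely as its $\lab(i)$-th argument. First I would condition on every Gaussian except $\cbr{G_i}_{i\in s(\tau)}$: this freezes all $Z_j$ with $j \in m(\tau)$, and leaves $\cbr{G_i}_{i\in s(\tau)}$ independent and $\eq$-distributed. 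Using the identity $\ev{g(y + ou(s)G)} = \T{s}g(y e^{\mu s})$ for $G \sim \eq$ (cf.\ \eqref{eq:semi-group}), the remaining conditional expectation factorises into a tensor product of one-dimensional OU semigroups acting on the slice $\tilde f$ of $f$ obtained by fixing the frozen coordinates:
\[
\ev{f(Z_{j(1)},\ldots,Z_{j(n)}) \mid \cdot} = \rbr{\bigotimes_{i \in s(\tau)} \T{t_{p(i)}}^{(\lab(i))}} \tilde f\Big(\rbr{A_i e^{\mu t_{p(i)}}}_{i \in s(\tau)},\rbr{Z_j}_{j \in m(\tau)}\Big).
\]

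The crucial input is the following decay lemma: for every canonical $g \in \pspace{}(\Rd)$,
\[
|\T{s} g(y)| \leq C \norm{g}{\pspace{}}{} e^{c \norm{y}{}{}} e^{-\mu s}, \qquad s \geq 0.
\]
To prove it I would first replace $g$ by $\T{1} g$: this stays canonical by $\T{\cdot}$-invariance of $\eq$, becomes $C^{\infty}$ by Gaussian convolution, and has gradient bounded in $\pspace{}$-norm by $C\norm{g}{\pspace{}}{}$ (in the spirit of Lemma \ref{lem:approximationQuality}). For $s \geq 1$, canonicity lets me insert a zero and write
\[
\T{s-1}(\T{1} g)(y) = \ev{(\T{1} g)(ye^{-\mu(s-1)} + ou(s-1) G) - (\T{1} g)(G)},
\]
with $G \sim \eq$; the mean-value theorem together with $1 - ou(r) \leq e^{-2\mu r}$ bounds the displacement by $\norm{y}{}{}e^{-\mu(s-1)} + \norm{G}{}{}e^{-2\mu(s-1)}$, yielding the claimed rate. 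The range $s \leq 1$ is trivial. Canonicity in each remaining coordinate is preserved at every iteration (by Fubini), so the single-variable estimate iterates to the multivariate bound for $\bigotimes_{i \in s(\tau)} \T{t_{p(i)}}^{(\lab(i))} \tilde f$.

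Putting everything together, the conditional expectation is at most $C \norm{f}{\pspace{}}{} \prod_{i \in s(\tau)} e^{-\mu t_{p(i)}} \cdot \exp\big(c\sum_{i\in s(\tau)} \norm{A_i e^{\mu t_{p(i)}}}{}{} + c \sum_{j \in m(\tau)} \norm{Z_j}{}{}\big)$. Since every ancestor $l \in P(i)$ satisfies $t_l \ge t_{p(i)}$, the factor $e^{\mu t_{p(i)}}$ is absorbed by $e^{-\mu t_l}$ in the definition of $A_i$, while the root contribution collapses to at most $\norm{x}{}{}$; hence $\norm{A_i e^{\mu t_{p(i)}}}{}{} \le \norm{x}{}{} + \sum_l \norm{G_l}{}{}$, and similarly for $\norm{Z_j}{}{}$, $j \in m(\tau)$. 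Finite exponential moments of the inner-vertex and $m(\tau)$-leaf Gaussians allow to integrate them out at the cost of a factor $C e^{c \norm{x}{}{}}$, yielding the desired estimate. The main obstacle is the decay lemma: a canonical function in $\pspace{}$ need not be smooth, so a direct Taylor argument fails, and the trick of first smoothing by $\T{1}$---which preserves canonicity, costs only constants in $\pspace{}$-norm, and furnishes an $\pspace{}$-controlled gradient---is the essential technical step.
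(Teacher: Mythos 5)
Your argument is correct, and it rests on the same core mechanism as the paper's proof, but it is packaged differently. The paper does not condition and does not formulate a semigroup decay lemma: it replaces $f$ in the single-label coordinates by the smoothed kernel $\hat f_I$ of \eqref{eq:smoothing} (which is exactly your $\T{1}$-regularisation, with the derivative bound supplied by Lemma \ref{lem:approximationQuality}), couples each leaf variable $\tilde Z_{j(i)}$ with an independent stationary copy $G_{j(i)}$, and uses canonicity together with the multivariate telescoping identity of Lemma \ref{lem:derivatives} to rewrite the expectation as an iterated integral of the mixed derivative $\partial^k \hat f$ over the segments $[G_{j(i)},\tilde Z_{j(i)}]$; the factor $e^{-\mu t_{p(i)}}$ then comes from the estimate $\|\tilde Z_i - G_i\|_k \le C_k\exp(C_k\|x\|-\mu t_{p(i)})$, and the H\"older/Schwarz step with Gaussian exponential moments finishes the proof. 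You instead condition on everything except the $s(\tau)$-leaf noises, recognise the conditional expectation as a tensor product of one-dimensional Ornstein--Uhlenbeck semigroups via $\ev g(y+ou(s)G)=\T{s}g(ye^{\mu s})$, and reduce the whole statement to the intrinsic decay bound $|\T{s}g(y)|\le C\|g\|_{\pspace{}}e^{c\|y\|}e^{-\mu s}$ for canonical $g\in\pspace{}(\Rd)$, proved by the same smoothing-plus-centred-subtraction-plus-mean-value argument; the absorption of $e^{\mu t_{p(i)}}$ by the ancestors' factors $e^{-\mu t_l}$ (using $t_l\ge t_{p(i)}$ along the path and the root collapsing to $\|x\|$) is exactly the step the paper performs implicitly through the law of $\tilde Z_i-G_i$. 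The trade-off: your route yields a clean, reusable statement about the semigroup acting on canonical functions and makes the origin of each $e^{-\mu t_{p(i)}}$ transparent, at the cost of having to track canonicity and $\pspace{}$-weights through the nested applications of the one-dimensional lemma (your Fubini remark is the right justification, but it does need to be checked coordinate by coordinate, and the case $t_{p(i)}<1$ should be handled, as in the paper, by simply dropping those leaves at the price of a constant); the paper's coupling treats all single-label coordinates in one stroke and keeps all error terms on a single probability space, which is why it needs the slightly heavier combinatorial identity of Lemma \ref{lem:derivatives}.
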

\begin{proof}
Let $k\leq n$. Without loss of generality we may assume that $I:=\cbr{1,2,\ldots, k}$ are single numbers (i.e. $j(i)\in s(\tau)$) and $\cbr{k+1, \ldots, n}$ are multiple ones. Let us also assume for a moment that for $i\in \cbr{1,2,\ldots, k}$ we have $t_{p(j(i))}\geq 1$. Let $Z_i$ and $G_i$ be as in Fact \ref{fact:haha}. We have $\ev{} f(Y_{j(1)}, Y_{j(2)}, \ldots, Y_{j(n)})= \ev{} f(Z_{j(1)}, Z_{j(2)}, \ldots, Z_{j(n)})$. For $i\leq k$ we define
\[
	  \tilde{Z}_i:= \sum_{l \in P(i)} ou(t_{p(l)} - t_l) G_l e^{-\mu (t_l-1)} + ou(t_{p(i)}-1)G_i.
\]
Moreover, let $\hat{f} := \hat{f}_{I}$ be given by \eqref{eq:smoothing}. By the semigroup property of $\T{}$ we have
\[
	\ev{} f(Z_{j(1)}, Z_{j(2)}, \ldots, Z_{j(n)}) = \ev{} \hat{f}(\tilde{Z}_{j(1)}, \ldots, \tilde{Z}_{j(k)}, Z_{j(k+1)} \ldots, Z_{j(n)}) =:A.
\]
By the fact that $\hat{f}$ is smooth with respect to coordinates from $I$ and canonical and Lemma \ref{lem:derivatives} we obtain
        \begin{align*}
          A  &= \ev{} \sum_{(\epsilon_1, \ldots, \epsilon_k) \in \cbr{0,1}^k} (-1)^{\sum_{i=1}^n \epsilon_i} \hat{f} (\tilde{Z}_{j(1)} + \epsilon_1 (G_{j(1)}-\tilde{Z}_{j(1)}), \ldots, {\tilde{Z}}_{j(k)} + \epsilon_k (G_{j(k)} -{\tilde{Z}}_{j(k)}), Z_{j(k+1)},\ldots, Z_{j(n)} )\\
            &= \ev{}  \int_{G_{j(1)}}^{\tilde{Z}_{j(1)}} \ldots \int_{G_{j(k)}}^{\tilde{Z}_{j(k)}} \frac{\partial^k}{\partial y_1  \ldots \partial y_k} \hat{f}(y_1, \ldots, y_k, Z_{j(k+1)},\ldots, Z_{j(n)}) \dd{y_k} \ldots \dd{y_1}.
    \end{align*}

     From now on we restrict to the case $d=1$. The proof for general $d$ proceeds along the same lines but it is notationally more cumbersome. Using Lemma \ref{lem:approximationQuality} and applying the Schwarz inequality multiple times we have
    \begin{multline*}
        |A| \leq C\norm{f}{\pspace{}}{} \ev{} \left| \rbr{\prod_{i=1}^k \int_{G_{j(i)}}^{\tilde{Z}_{j(i)}} \exp\cbr{|y_i|} \dd{y}_i} \prod_{i=k+1}^n \exp\cbr{ |Z_{j(i)}|} \right| \\
        \leq \norm{f}{\pspace{}}{}  \prod_{i=1}^k \norm{\int_{G_{j(i)}}^{\tilde{Z}_{j(i)}} \exp\cbr{|y_i|} \dd{y_i}  }{2^i}{}  \prod_{i=k+1}^n \norm{ \exp\cbr{|Z_{j(i)}|} }{2^i}{}
    \end{multline*}


    Note that by the definition of $\tilde{Z}_i$ we have $\tilde{Z}_i - G_i =  H_i e^{-\mu (t_{p(i)}-1)} + \rbr{ou(t_{p(i) }-1) - 1}  G_i $, where $H_i$ is independent of $G_i$ and $H_i \sim \mathcal{N}(x_i, \sigma_i^2)$ with $\sigma_i \leq \sigma/\sqrt{2\mu}$ and $\norm{x_i}{}{} \leq \norm{x}{}{}$. Thus $\tilde{Z}_i - G_i$ is a~Gaussian variable with the mean bounded by $C\norm{x_i}{}{}e^{-\mu t_{p(i)}}$ and the standard deviation of order $e^{-\mu t_{p(i)}}$. In particular $\|\tilde{Z}_i-G_i\|_k \le C_k\exp(C_k\|x\|-\mu t_{p(i)})$. Since
    \begin{displaymath}
    \int_{G_{j(i)}}^{\tilde{Z}_{j(i)}} \exp\cbr{|y_i|} \dd{y}_i \le \Big(e^{|G_{j(i)}|} + e^{|\tilde{Z}_{j(i)}|}\Big)|\tilde{Z}_i - G_i|
    \end{displaymath}
    the proof can be concluded by yet another application of the Schwarz inequality and standard facts on exponential integrability of Gaussian variables.

Finally, if some $i$'s do not fulfil $t_{p(i)}\geq 1$ we repeat the above proof with $s(\tau)$ replaced by the set $s'$ of indices for which additionally $t_{p(i)}\geq 1$. In this way we obtain \eqref{eq:multipledecay} with $\sum_{i\in s'} t_{p(i)}$. In our setting $$\sum_{i\in s(\tau)} t_{p(i)} - \sum_{i\in s'} t_{p(i)} = \sum_{i \in s(\tau) \setminus s'} t_{p(i)} \leq |s(\tau) \setminus s'|\leq n,$$
hence \eqref{eq:multipledecay} still holds (with a worse constant $C$).
\end{proof}

\paragraph{Bounds for $U$- and $V$-statistics} Using Fact \ref{fact:f3Better} we will now be able to estimate moments of $V$- and $U$-statistics of the branching particle system. The inequalities will be stated separately for each regime discussed in Section \ref{sec:results}.

We will first develop $L_2$ bounds for $U$-statistics with deterministic normalization. Since in the slow and critical branching case the normalization in our theorems is random, related to the size of the process, we will later transform these bounds to $L_0$-bounds for $U$-statistics with a random normalization.

\begin{fact}[Small branching rate] \label{fact:approximation} Let $\cbr{X_t}_{t\geq 0}$ be the OU branching particle system with $\lambda_p<2\mu$. There exist $C,c >0$ such that for any  canonical kernel $f\in \pspace{}(\R^{nd})$ we have
    \[
             \ev{}_x\rbr{ e^{-(n/2)\lambda_p t} V_t^n(f) }^2 \leq C \exp\cbr{c \norm{x}{}{}} \norm{f}{\pspace{} }{2}.
    \]
\end{fact}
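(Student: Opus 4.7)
The plan is to rewrite the second moment as a single $V$-statistic of order $2n$ via the identity $V_t^n(f)^2 = V_t^{2n}(f\otimes f)$. Since the weight $n(\cdot)$ is multiplicative across coordinate blocks one has $\norm{f\otimes f}{\pspace{}(\R^{2nd})}{} = \norm{f}{\pspace{}(\R^{nd})}{2}$, and $f\otimes f$ remains canonical whenever $f$ is, because integration against $\varphi(\dd{x_k})$ annihilates exactly one of the two tensor factors. Fact \ref{fact:f1} then expresses $\ev{}_x V_t^{2n}(f\otimes f)$ as the finite sum $\sum_{\tau\in\tree{2n}} S(\tau,t,x)$, so it suffices to bound each summand by $C e^{c\norm{x}{}{}}\norm{f}{\pspace{}}{2}\, e^{n\lambda_p t}$ with constants independent of $t$ and $f$.

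For a fixed $\tau\in\tree{2n}$, I would apply Fact \ref{fact:f3Better} to $f\otimes f$. Using the identity $\sum_{l\in s(\tau)} t_{p(l)} = \sum_{i\in i(\tau)} n_i\,t_i$, where $n_i\in\{0,1,2\}$ is the number of single-leaf children of the inner vertex $i$, and setting $\alpha_i := \lambda_p - \mu n_i$, the estimate of $S(\tau,t,x)$ reduces to controlling
\begin{equation*}
e^{\lambda_p t}\prod_{i\in i(\tau)}\int_0^t e^{\alpha_i t_i}\mathbf{1}_{t_i\le t_{p(i)}}\dd{t_i}
\end{equation*}
by a constant multiple of $e^{n\lambda_p t}$, uniformly in $t$.

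The combinatorial core is the inequality $\sum_{i\in i(\tau)}\alpha_i \le (n-1)\lambda_p$. With $k:=|s(\tau)|$ and $m:=|m(\tau)|$ one has $|i(\tau)|=k+m-1$, $\sum_i n_i = k$ (every single leaf has an inner-vertex parent, since a leaf as the root's only child would have to carry all $2n$ labels), and the label-partition constraint forces $k+2m\le 2n$. Combined with the slow-branching assumption $\mu > \lambda_p/2$, this gives
\begin{equation*}
\sum_{i\in i(\tau)} \alpha_i = (k+m-1)\lambda_p - \mu k \le (k/2+m-1)\lambda_p \le (n-1)\lambda_p,
\end{equation*}
with equality forcing $k=0$ and $m=n$, a configuration in which every $\alpha_i$ equals $\lambda_p > 0$.

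The hardest step is converting this exponent bound into a uniform bound on the iterated integral. Defining $F_v(s) := \int_0^s e^{\alpha_v r} F_{v_1}(r) F_{v_2}(r)\dd{r}$ for the contribution of the subtree rooted at $v$, with $F_v\equiv 1$ on leaves, I would show by induction up the tree that $F_v(s) \le C(1+s)^{P_v} e^{\beta_v s}$ for suitable $\beta_v,P_v$: each integration produces a constant (when the effective exponent is strictly negative), a linear factor in $s$ (when it vanishes), or a pure exponential (when positive). In the equality case $k=0,\,m=n$ every effective exponent is strictly positive, hence $P_1=0$ and $F_1(t)\le Ce^{(n-1)\lambda_p t}$; in every other case the strict gap $(n-1)\lambda_p-\sum_i\alpha_i\ge \delta>0$ absorbs any accumulated polynomial factor $(1+t)^{P_1}$. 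Multiplying by the $e^{\lambda_p t}$ from $S(\tau,t,x)$ and the $e^{-n\lambda_p t}$ from the normalisation yields the desired $t$-uniform estimate, and summing over the finite family $\tree{2n}$ completes the proof.
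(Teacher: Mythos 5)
Your reduction coincides with the paper's up to the last step: writing $\ev{}_x V_t^n(f)^2=\ev{}_x V_t^{2n}(f\otimes f)$, noting that $f\otimes f$ is canonical with $\norm{f\otimes f}{\pspace{}}{}\le\norm{f}{\pspace{}}{2}$, expanding via Fact \ref{fact:f1} over $\tree{2n}$ and inserting the decay of Fact \ref{fact:f3Better}; your bookkeeping ($\sum_{l\in s(\tau)}t_{p(l)}=\sum_i n_i t_i$, $|i(\tau)|=k+m-1$, $k+2m\le 2n$) is also the paper's. The gap sits exactly in the step you call the hardest. The growth of the nested integral is \emph{not} governed by $\sum_i\alpha_i$: in your induction a subtree whose accumulated exponent is negative only yields a bounded $F_v$, i.e.\ its contribution is clipped at $0$, so the exponent you actually obtain at the top is the nested quantity $\beta_v=\max\rbr{0,\alpha_v+\beta_{v_1}+\beta_{v_2}}$, which can strictly exceed $\sum_i\alpha_i$. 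Concretely, take the tree in $\tree{2n}$ whose top inner vertex has one leaf carrying $2n-2$ labels and one inner child with two single-label leaves: then $\sum_i\alpha_i=2\lambda_p-2\mu$ may be arbitrarily close to $0$, yet the iterated integral grows like $e^{\lambda_p t}$, because the factor $e^{(\lambda_p-2\mu)t_i}$ only integrates to a constant and its negativity cannot be cashed in against the positive exponent above it. Hence your closing claim --- that the strict gap $(n-1)\lambda_p-\sum_i\alpha_i$ absorbs everything in the non-equality cases --- does not bound $\beta_{\mathrm{top}}$, and the argument as written does not close.

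The statement you need, and which is true, is $\sum_i\max(0,\alpha_i)\le(n-1)\lambda_p$: since $\max(0,\lambda_p-\mu)\le\lambda_p/2$ (this is precisely where $\lambda_p<2\mu$ enters), $\max(0,\lambda_p-2\mu)=0$ and $\max(0,\lambda_p)=\lambda_p$, one gets $\sum_i\max(0,\alpha_i)\le(k/2+m-1)\lambda_p\le(n-1)\lambda_p$ --- numerically the same chain you wrote, applied to the correct aggregate --- and $\beta_{\mathrm{top}}\le\sum_i\max(0,\alpha_i)$ then finishes the induction (polynomial factors arising when $\lambda_p=\mu$ are swallowed by the cap $t\le Ce^{\lambda_p t/2}$). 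This is exactly the paper's route, phrased there as the count $|P_1(\tau)|+2|P_3(\tau)|\le 2n-2$ for the numbers of inner vertices with one, resp.\ zero, single-label children; the paper moreover drops the nested constraints altogether and bounds each one-dimensional time integral separately by $Ce^{\lambda_p t/2}$, $C$ or $Ce^{\lambda_p t}$ according as $n_i=1,2,0$, which makes your tree induction and the equality-case discussion unnecessary. So the proof is repairable with ingredients you already have, but the combinatorial core must be the positive-part (or clipped) sum, not $\sum_i\alpha_i$.
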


\begin{proof}  We need to estimate $\ev{}_x e^{-n\lambda_p t} \sum_{i_1, i_2,\ldots i_{2n}=1}^{|X_t|} f(X_t(i_1), \ldots,X_t(i_n)) f(X_t(i_2), \ldots,X_t(i_{2n}))$. Obviously the function $f \otimes f$ is canonical. Moreover, it is easy to check, that $\norm{f\otimes f}{\pspace{}}{} \le \norm{f}{\pspace{}}{2}$.

  By Fact \ref{fact:f1} it suffices to show that for each $\tau \in \tree{2n}$ there exist $C,c>0$ such that for any $t>0$ we have $e^{-n\lambda_p t}S(\tau, t,x) \leq C\exp\cbr{c\norm{x}{}{}}\norm{f}{\pspace{}}{2}$.

Let us fix $\tau \in \tree{2n}$ and denote by $P_1(\tau)$ and $P_2(\tau)$ the sets of inner vertices of $\tau$ with respectively one and two children in $s(\tau)$. Set also $P_3(\tau) :=  i(\tau)\setminus(P_1(\tau)\cup P_2(\tau))$.

By the definition  of $S(\tau,t,x)$, Fact \ref{fact:f3Better} and the assumption $\lambda_p < 2\mu$, we get
\begin{align*}
&e^{-n\lambda_p t}S(\tau, t,x) \\
\le& C_1 e^{-(n-1)\lambda_p t} \rbr{ \prod_{i \in i(\tau)} \intc{t}  \dd{t_{i}}  }  \rbr{\prod_{i \in i(\tau)}e^{\lambda_p t_i} 1_{t_{i} \leq t_{p(i)}}} OU \rbr{f, \tau, t, \cbr{t_i}_{i \in i(\tau)},x}\\
\le& C_2 \|f\|_{\pspace{}}^2 \exp(c\|x\|) e^{-(n-1)\lambda_p t} \rbr{ \prod_{i \in i(\tau)} \intc{t}  \dd{t_{i}}  }  \rbr{\prod_{i \in i(\tau)}e^{\lambda_p t_i} 1_{t_{i} \leq t_{p(i)}}\prod_{i \in s(\tau)} e^{-\mu t_{p(i)}}}\\
\le&C_2\|f\|_{\pspace{}}^2 \exp(c\|x\|) \exp\Big((-(n-1)+|P_1(\tau)|/2 + |P_3(\tau)|)\lambda_p t\Big) \\
&\times
\Big(\prod_{i \in P_1(\tau)}\int_0^t e^{(\lambda_p - \mu)t_i - (\lambda_p t)/2}dt_i \Big) \Big(\prod_{i \in P_2(\tau)}\int_0^t e^{(\lambda_p - 2\mu) t_i}dt_i \Big) \Big(\prod_{i \in P_3(\tau)}\int_0^t e^{\lambda_p (t_i -t)}dt_i \Big)\\
\le& C_3\|f\|_{\pspace{}}^2 \exp(c\|x\|) \exp\Big((-(n-1)+|P_1(\tau)|/2 + |P_3(\tau)|)\lambda_p t\Big).
\end{align*}
To end the proof it is thus sufficient to show that $|P_1(\tau)| + 2|P_3(\tau)| \le 2n-2$.

Note that $|P_1(\tau)| + 2|P_2(\tau)| = |s(\tau)|$ and $\sum_{i=1}^3|P_i(\tau)| = |i(\tau)| = |l(\tau)|-1$. Thus
$|P_1(\tau)| + 2|P_3(\tau)| =  2|l(\tau)| - 2 - |s(\tau)|=|l(\tau)| + |m(\tau)| - 2 \le 2n -2$ (recall that $m(\tau)$ denotes the set of leaves with multiple labels). This ends the proof.
\end{proof}

We will now pass to an analogous estimate in the critical case.

\begin{fact}[Critical branching rate]
\label{fact:approximation_critical}
Let $\cbr{X_t}_{t\geq 0}$ be the OU branching particle system with $\lambda_p=2\mu$. There exist $C,c >0$ such that for any  canonical kernel $f\in \pspace{}(\R^{nd})$ we have
    \[
             \ev{}_x\rbr{ t^{-n/2}e^{-(n/2)\lambda_p t} V_t^n(f) }^2 \leq C \exp\cbr{c \norm{x}{}{}} \norm{f}{\pspace{} }{2}.
    \]
\end{fact}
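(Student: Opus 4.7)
The plan is to repeat essentially verbatim the argument of Fact \ref{fact:approximation}, since the tree-decomposition machinery from Fact \ref{fact:f1} and the exponential-decay bound from Fact \ref{fact:f3Better} do not use the slow-branching hypothesis $\lambda_p<2\mu$. After reducing the $L^2$ estimate to $\ev{}_x V_t^{2n}(f\otimes f)$ (noting that $f\otimes f$ is canonical and $\|f\otimes f\|_{\pspace{}}\le\|f\|_{\pspace{}}^2$), I apply Fact \ref{fact:f1} to get a finite sum over $\tau\in\tree{2n}$ of $S(\tau,t,x)$. It suffices to estimate each term by $Ct^n\exp(c\|x\|)\|f\|_{\pspace{}}^2\,\exp(n\lambda_p t)$, so that after multiplying by $t^{-n}e^{-n\lambda_p t}$ we obtain the stated bound.

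Fix $\tau\in\tree{2n}$ and partition $i(\tau)=P_1(\tau)\sqcup P_2(\tau)\sqcup P_3(\tau)$ exactly as in Fact \ref{fact:approximation}, according to the number of children in $s(\tau)$. Invoking Fact \ref{fact:f3Better} and collecting the factors $e^{\lambda_p t_i}$ with the $e^{-\mu t_{p(i)}}$ coming from single leaves, one obtains, just as before,
\begin{align*}
e^{-n\lambda_p t}S(\tau,t,x)\le C\|f\|_{\pspace{}}^2\exp(c\|x\|)\,&\exp\bigl((-(n-1)+|P_1(\tau)|/2+|P_3(\tau)|)\lambda_p t\bigr)\\
&\times \Bigl(\prod_{i\in P_1(\tau)}\int_0^t e^{(\lambda_p-\mu)t_i-(\lambda_p t)/2}dt_i\Bigr)\Bigl(\prod_{i\in P_2(\tau)}\int_0^t e^{(\lambda_p-2\mu)t_i}dt_i\Bigr)\Bigl(\prod_{i\in P_3(\tau)}\int_0^t e^{\lambda_p(t_i-t)}dt_i\Bigr).
\end{align*}
The integrals over $P_1$ and $P_3$ are $O(1)$ uniformly in $t$ since $\lambda_p-\mu>0$ and $\lambda_p>0$ are unchanged. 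The only novelty of the critical regime is that for $i\in P_2(\tau)$ the exponent $\lambda_p-2\mu$ now vanishes, so $\int_0^t e^{(\lambda_p-2\mu)t_i}dt_i=t$ instead of $O(1)$. Hence we pick up an additional factor $t^{|P_2(\tau)|}$.

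To conclude I observe that $2|P_2(\tau)|\le|s(\tau)|\le|l(\tau)|\le 2n$, so $|P_2(\tau)|\le n$; this is the key combinatorial input responsible for the $t^{n/2}$ in the normalization. The combinatorial identity $|P_1(\tau)|+2|P_3(\tau)|\le 2n-2$ from Fact \ref{fact:approximation} still holds (its proof does not use $\lambda_p<2\mu$), so the exponential prefactor in $t$ is bounded by $1$. When that prefactor is strictly less than $1$, it absorbs $t^{|P_2(\tau)|}$ and the bound is uniform in $t$; when it equals $1$, the factor $t^{-n}$ arising from squaring the $t^{-n/2}$ normalization exactly cancels (or dominates) $t^{|P_2(\tau)|}$ since $|P_2(\tau)|\le n$. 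Summing over the finitely many $\tau\in\tree{2n}$ yields the claim.

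The only real obstacle is verifying that the bookkeeping survives the replacement of the $P_2$-integrals, in particular checking the uniform bound $|P_2(\tau)|\le n$; once that is in place the $t^{1/2}$ in the normalization has a transparent combinatorial meaning (one extra power of $t^{1/2}$ per pair of single-labelled siblings, and there are at most $n$ such pairs among $2n$ labels). Everything else is a mechanical re-run of the slow-branching computation.
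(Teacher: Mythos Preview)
Your proof is correct and follows essentially the same route as the paper's own argument: reduce to $f\otimes f$, apply the tree decomposition (Fact~\ref{fact:f1}) and the decay estimate (Fact~\ref{fact:f3Better}), split $i(\tau)$ into $P_1,P_2,P_3$, observe that in the critical regime the $P_2$-integrals contribute $t$ instead of $O(1)$, and conclude via $|P_2(\tau)|\le n$ together with the combinatorial bound $|P_1(\tau)|+2|P_3(\tau)|\le 2n-2$ from Fact~\ref{fact:approximation}. Your case distinction on whether the exponential prefactor is $=1$ or $<1$ is a bit more elaborate than needed (the paper simply combines the two inequalities directly), but the substance is identical.
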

\begin{proof}

We will use similar ideas as in the proof of Fact \ref{fact:approximation} as well as the notation introduced therein. Consider any $\tau\in \tree{2n}$. By the definition  of $S(\tau,t,x)$, Fact \ref{fact:f3Better} and the assumption $\lambda_p = 2\mu$, we obtain
\begin{align*}
&t^{-n}e^{-n\lambda_p t}S(\tau, t,x) \\
\le& C_1
\|f\|_{\pspace{}}^2 \exp\cbr{{c \norm{x}{}{}}}
t^{-n}e^{-(n-1)\lambda_p t} \rbr{ \prod_{i \in i(\tau)} \intc{t}  \dd{t_{i}}  }  \rbr{\prod_{i \in i(\tau)}e^{\lambda_p t_i} 1_{t_{i} \leq t_{p(i)}}\prod_{i \in s(\tau)} e^{-\mu t_{p(i)}}}\\
\le& C_2\|f\|_{\pspace{}}^2 \exp(c\|x\|) t^{-n}\exp\Big((-(n-1)+|P_1(\tau)|/2 + |P_3(\tau)|)\lambda_p t\Big) \\
&\times
\Big(\prod_{i \in P_1(\tau)}\int_0^t e^{(\lambda_p - \mu)t_i - (\lambda_p t)/2}dt_i \Big) \Big(\prod_{i \in P_2(\tau)}\int_0^t e^{(\lambda_p - 2\mu) t_i}dt_i \Big) \Big(\prod_{i \in P_3(\tau)}\int_0^t e^{\lambda_p (t_i -t)}dt_i \Big)\\
\le&C_2\|f\|_{\pspace{}}^2 \exp(c\|x\|),
\end{align*}
where we used the fact that $|P_2(\tau)| \le n$ and the estimate $|P_1(\tau)| + 2|P_3(\tau)| \le 2n-2$ obtained in the proof of Fact \ref{fact:approximation}.

\end{proof}

\begin{fact}[Fast branching rate]\label{fact:approximation_supercritical}Let $\cbr{X_t}_{t\geq 0}$ be the OU branching particle system with $\lambda_p>2\mu$. There exist $C,c >0$ such that for any canonical kernel $f\in \pspace{}(\R^{nd})$ we have
    \[
             \ev{}_x\rbr{ e^{-n(\lambda_p - \mu) t} V_t^n(f) }^2 \leq C\exp\cbr{c\norm{x}{}{}}\norm{f}{\pspace{} }{2}.
    \]
\end{fact}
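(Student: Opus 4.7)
The plan is to mimic closely the proofs of Facts~\ref{fact:approximation} and \ref{fact:approximation_critical}, adjusting only the arithmetic that controls the exponents in time. As in those proofs, we begin by observing that
\[
\ev{}_x \rbr{e^{-n(\lambda_p - \mu)t} V_t^n(f)}^2 = e^{-2n(\lambda_p - \mu)t} \ev{}_x V_t^{2n}(f\otimes f),
\]
with $f\otimes f$ canonical and $\|f\otimes f\|_{\pspace{}} \leq \|f\|_{\pspace{}}^2$. Via Fact~\ref{fact:f1} the right-hand side reduces to $e^{-2n(\lambda_p - \mu)t}\sum_{\tau \in \tree{2n}} S(\tau, t, x)$, so it suffices to show that each term is bounded by $C\exp(c\|x\|)\|f\|_{\pspace{}}^2$.

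Fix $\tau \in \tree{2n}$, and following the proof of Fact~\ref{fact:approximation} split the inner vertices $i(\tau)$ into $P_1(\tau)$, $P_2(\tau)$, $P_3(\tau)$ according to the number of children they have in $s(\tau)$ (one, two, or none). Using Fact~\ref{fact:f3Better} to bound $OU$ by $C\exp(c\|x\|)\|f\|_{\pspace{}}^2 \exp\bigl(-\mu\sum_{i\in s(\tau)} t_{p(i)}\bigr)$, the variables $t_i$ for $i\in P_k(\tau)$ enter the exponent with coefficient $(\lambda_p - k\mu)$ for $k=1,2$, and with coefficient $\lambda_p$ for $k=3$. Under the fast branching assumption $\lambda_p>2\mu$ all three of these coefficients are positive, so I would estimate the integrals crudely by
\[
\int_0^t e^{(\lambda_p - k\mu)t_i}\,dt_i \leq C e^{(\lambda_p - k\mu)t},\qquad k\in\{0,1,2\},
\]
ignoring the constraints $t_i \leq t_{p(i)}$. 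This leads to a bound of the form $S(\tau,t,x)\leq C\exp(c\|x\|)\|f\|_{\pspace{}}^2 \exp\bigl(|l(\tau)|\lambda_p t - |s(\tau)|\mu t\bigr)$, where I used the identities $|i(\tau)|+1 = |l(\tau)|$ and $|s(\tau)| = |P_1(\tau)| + 2|P_2(\tau)|$.

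The remaining combinatorial step — and the one I expect to require most care — is to verify that
\[
(|l(\tau)| - 2n)\lambda_p + (2n - |s(\tau)|)\mu \leq 0,
\]
so that $e^{-2n(\lambda_p - \mu)t} S(\tau,t,x)$ is uniformly bounded in $t$. Writing $2n - |l(\tau)| = \sum_{l\in m(\tau)}(|\lab(l)|-1)$ and $2n - |s(\tau)| = \sum_{l\in m(\tau)}|\lab(l)|$ and using that $|\lab(l)|\geq 2$ for $l \in m(\tau)$, we get $2n - |s(\tau)| \leq 2(2n - |l(\tau)|)$. Combining this with $\lambda_p > 2\mu$ gives $(2n - |l(\tau)|)\lambda_p \geq 2(2n - |l(\tau)|)\mu \geq (2n - |s(\tau)|)\mu$, which is exactly the desired inequality. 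Summing over the finitely many $\tau \in \tree{2n}$ concludes the proof. The main potential pitfall is bookkeeping in this last combinatorial step; conceptually, the inequality reflects the fact that in the fast branching regime, the contribution of "spread out" trees with many single leaves is the dominant one, and this is what the normalisation $e^{-n(\lambda_p-\mu)t}$ is calibrated to capture.
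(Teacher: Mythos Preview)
Your proof is correct and follows essentially the same route as the paper: reduce to bounding $e^{-2n(\lambda_p-\mu)t}S(\tau,t,x)$ for each $\tau\in\tree{2n}$, apply Fact~\ref{fact:f3Better}, integrate crudely over the $t_i$'s (all three exponents are positive here), and finish with a combinatorial inequality. Your organization of the combinatorial step is in fact slightly cleaner than the paper's: you first collapse the exponent to $|l(\tau)|\lambda_p - |s(\tau)|\mu$ and then prove $2n-|s(\tau)|\le 2(2n-|l(\tau)|)$ directly from the label structure, whereas the paper keeps the $P_i$ notation, uses the intermediate bound $\lambda_p-\mu>\lambda_p/2$, and reduces to $2|m(\tau)|+|s(\tau)|\le 2n$ --- which is of course the same inequality after rearranging.
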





\begin{proof} As in the previous cases, consider any $\tau \in \tree{2n}$. We have
\begin{align*}
&e^{-2n(\lambda_p -\mu)t}S(\tau, t,x) \\
\le& C_1 e^{-2n(\lambda_p -\mu)t +\lambda_p t}\rbr{ \prod_{i \in i(\tau)} \intc{t}  \dd{t_{i}}  }  \rbr{\prod_{i \in i(\tau)}e^{\lambda_p t_i} 1_{t_{i} \leq t_{p(i)}}} OU \rbr{f, \tau, t, \cbr{t_i}_{i \in i(\tau)},x}\\
\le& C_2 \|f\|_{\pspace{}}^2 \exp(c\|x\|) e^{-2n(\lambda_p -\mu)t +\lambda_p t}\rbr{ \prod_{i \in i(\tau)} \intc{t}  \dd{t_{i}}  }  \rbr{\prod_{i \in i(\tau)}e^{\lambda_p t_i} 1_{t_{i} \leq t_{p(i)}}\prod_{i \in s(\tau)} e^{-\mu t_{p(i)}}}\\
\leq&C_2\|f\|_{\pspace{}}^2 \exp(c\|x\|) e^{-2n(\lambda_p -\mu)t +\lambda_p t} \\
&\times
\Big(\prod_{i \in P_1(\tau)}\int_0^t e^{(\lambda_p - \mu)t_i }dt_i \Big) \Big(\prod_{i \in P_2(\tau)}\int_0^t e^{(\lambda_p - 2\mu) t_i}dt_i \Big) \Big(\prod_{i \in P_3(\tau)}\int_0^t e^{\lambda_p t_i}dt_i \Big)\\
\le& C_3\|f\|_{\pspace{}}^2 \exp(c\|x\|) \exp\Big(-2n(\lambda_p -\mu)t +\lambda_p t + |P_1(\tau)|(\lambda_p - \mu)t + |P_2(\tau)|(\lambda_p - 2\mu)t
+|P_3(\tau)|\lambda_pt\Big).
\end{align*}
Thus it is enough to prove that
\begin{align}\label{eq:approx_super}
\lambda_p  + |P_1|(\lambda_p - \mu) + |P_2|(\lambda_p - 2\mu)
+|P_3|\lambda_p \le 2n(\lambda_p -\mu),
\end{align}
where for simplicity we write $P_i$ instead of $P_i(\tau)$ (in the rest of the proof we will use the same convention with other characteristics of $\tau$). Using the equality $|s| = |P_1| + 2|P_2|$, we may rewrite (\ref{eq:approx_super}) as
\begin{displaymath}
\lambda_p + |s|(\lambda_p - \mu) - |P_2|\lambda_p +|P_3|\lambda_p \le 2n(\lambda_p - \mu),
\end{displaymath}
so by the inequalities $2n \ge |s|$ and $\lambda_p - \mu > \lambda_p/2$ it is enough to prove that
\begin{displaymath}
2 + |s| - 2|P_2| + 2|P_3| \le 2n.
\end{displaymath}
But $|P_3| = |i| - |P_2| - |P_1|$ and so
\begin{displaymath}
2 + |s| - 2|P_2| + 2|P_3| = 2 + |s| + 2|i| - 4|P_2| - 2|P_1| = 2 -|s| + 2|i| = - |s| + 2|l| = 2|m| + |s| \le 2n,
\end{displaymath}
which ends the proof.
\end{proof}

\paragraph{Random normalization} Using the facts obtained above we will now prove estimates for $U$-statistics normalized by a proper power of $|X_t|$, which will be relevant in the proofs of Theorems \ref{thm:ustatistics-slow} and \ref{thm:ustatistics-critical}. Since asymptotically $|X_t|\exp(-\lambda_p t)$ behaves (conditionally on $Ext^c$) as an exponential random variable $W$ and $\ev{} W^{-1}$ does not exist, we will have to introduce a truncation, cutting out the set where $|X_t|$ is small.

\begin{cor} \label{cor:helper} Let $\cbr{X_t}_{t\geq 0}$ be the OU branching system with $\lambda_p<2\mu$. There exist constants $C, c>$ such that for any  canonical $f \in \pspace{}(\R^{nd})$ and $r \in (0,1)$ we have
\begin{displaymath}
\ev{}_x\rbr{ 1_{Ext^c} |X_t|^{-n/2}|U_t^n(f)| 1_{\cbr{|X_t| \ge re^{\lambda_p t}}}} \leq C \exp\cbr{c \norm{x}{}{}} r^{-n/2} \|f(2n\cdot)\|_{\pspace{}}.
\end{displaymath}
\end{cor}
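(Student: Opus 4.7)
The plan is to combine Cauchy–Schwarz with a canonicality-enhanced $L^2$ bound on $U_t^n(f)$.

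First, Cauchy–Schwarz gives
\[
\Bigl(\ev{}_x\bigl[1_{Ext^c}|X_t|^{-n/2}|U_t^n(f)|1_{\{|X_t|\ge re^{\lambda_p t}\}}\bigr]\Bigr)^2 \le \ev{}_x\bigl[|X_t|^{-n}U_t^n(f)^2 1_{\{|X_t|\ge re^{\lambda_p t}\}}\bigr] \le r^{-n}e^{-n\lambda_p t}\ev{}_x U_t^n(f)^2,
\]
since $|X_t|^{-n}\le r^{-n}e^{-n\lambda_p t}$ on the indicator event. The problem thus reduces to establishing the key second-moment estimate
\[
\ev{}_x U_t^n(f)^2 \le C e^{n\lambda_p t}\|f(2n\cdot)\|_{\pspace{}}^2 \exp(c\|x\|) \qquad (\ast)
\]
for every canonical $f$.

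For $(\ast)$, I would expand via \eqref{eq:inex}: $U_t^n(f) = \sum_{J\in\mathcal{J}} a_J V_t^{|J|}(f_J)$, so that $\ev{}_x U_t^n(f)^2$ is controlled by $\sum_J\ev{}_x V_t^{|J|}(f_J)^2$. For a partition $J$, let $S_J\subseteq\{1,\ldots,|J|\}$ denote the indices of singleton blocks; then the canonicality of $f$ makes $f_J$ canonical exactly at the coordinates in $S_J$, since for $l\in S_J$ the variable $x_l$ occupies a single slot of $f$ and integrating against $\varphi(x_l)\dd{x_l}$ therefore vanishes. Consequently the Hoeffding decomposition of $f_J$ collapses to $f_J = \sum_{I\supseteq S_J}\Pi_I f_J$ with every $\Pi_I f_J$ canonical on $\R^{|I|d}$, yielding
\[
V_t^{|J|}(f_J) = \sum_{I\supseteq S_J}|X_t|^{|J|-|I|}V_t^{|I|}(\Pi_I f_J).
\]

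Next, I would apply Cauchy–Schwarz together with the moment bound $\ev{}_x|X_t|^{4(|J|-|I|)}\le Ce^{4(|J|-|I|)\lambda_p t}$ from Corollary \ref{cor:godsavethequeen} and Fact \ref{fact:approximation} applied to the canonical tensor $(\Pi_I f_J)^{\otimes 2}$ on $\R^{2|I|d}$ (which delivers the fourth moment of $V_t^{|I|}(\Pi_I f_J)$) to obtain
\[
\ev{}_x\bigl[|X_t|^{|J|-|I|}V_t^{|I|}(\Pi_I f_J)\bigr]^2 \le C e^{(2|J|-|I|)\lambda_p t}\|\Pi_I f_J\|_{\pspace{}}^2\exp(c\|x\|).
\]
The pivotal combinatorial input is the inequality $2|J|-|S_J|\le n$, which follows from $n = |S_J| + \sum_{l:|J_l|\ge 2}|J_l| \ge |S_J| + 2(|J|-|S_J|)$; since $I\supseteq S_J$, the exponent $2|J|-|I|$ never exceeds $n$, so each term is bounded by $Ce^{n\lambda_p t}\|\Pi_I f_J\|_{\pspace{}}^2\exp(c\|x\|)$. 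The norm estimate $\|\Pi_I f_J\|_{\pspace{}}\le C\|f(2n\cdot)\|_{\pspace{}}$ comes from the pointwise bound $|f_J(x)|\le\|f(2n\cdot)\|_{\pspace{}}\exp\bigl((1/2)\sum_l|x_l|_1\bigr)$, attained in the worst case when a single block of $J$ absorbs all $n$ slots of $f$, together with the $\pspace{}$-boundedness of the Hoeffding projection via exponential moments of $\varphi$. Summing over the finitely many $J$ and $I$ yields $(\ast)$, and the corollary follows.

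The hard part will be correctly tracking the partial canonicality of $f_J$ through the Hoeffding decomposition and arranging the $\pspace{}$-norm bookkeeping so that the final bound comes out in terms of $\|f(2n\cdot)\|_{\pspace{}}$; the combinatorial bound $2|J|-|S_J|\le n$ is exactly what compensates for the loss of canonicality on non-trivial partitions and keeps every contribution below the target rate $e^{n\lambda_p t}$.
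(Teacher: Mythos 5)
Your proposal is correct, but it takes a genuinely different route from the paper's. The paper keeps the truncation event inside the expansion: after writing $U_t^n(f)=\sum_J a_J V_t^{|J|}(f_J)$ via \eqref{eq:inex} and collapsing the Hoeffding decomposition of $f_J$ exactly as you do (using that $f_J$ is canonical in the singleton coordinates), it bounds the random factor deterministically on $\cbr{|X_t|\ge re^{\lambda_p t}}$, namely $|X_t|^{|I|-n/2}\le r^{|I|-n/2}e^{-(n-2|I|)\lambda_p t/2}$, so that only first moments of the canonical $V$-statistics are needed, i.e.\ only the $L_2$ bound of Fact \ref{fact:approximation} through $\ev{}|V|\le \norm{V}{2}{}$; your combinatorial inequality $2|J|-|S_J|\le n$ is the same input $n\ge 2k-l$ that underlies the paper's \eqref{eq:set_bounds}. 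You instead apply Cauchy--Schwarz globally first, reducing the corollary to the stronger standalone estimate $(\ast)$, an $L_2$ bound for $U_t^n(f)$ with deterministic normalization $e^{-n\lambda_p t/2}$; the price is that the terms $|X_t|^{|J|-|I|}V_t^{|I|}(\Pi_I f_J)$ then require fourth moments, which you supply correctly via the identity $V_t^{|I|}(g)^2=V_t^{2|I|}(g\otimes g)$ (so Fact \ref{fact:approximation} applied to the canonical tensor square yields the fourth moment) together with Corollary \ref{cor:godsavethequeen} for the moments of $|X_t|$. Two small points: the term with $I=\emptyset$ (possible only when $S_J=\emptyset$) is not covered by the tensor-square application of Fact \ref{fact:approximation}, but it equals the deterministic quantity $\ddp{f_J}{\eq^{\otimes|J|}}$ times $|X_t|^{|J|}$ and is handled trivially by Corollary \ref{cor:godsavethequeen} and $2|J|\le n$ --- the paper treats the analogous case $I^c=\emptyset$ separately, and you should too; and your norm bookkeeping $\norm{\Pi_I f_J}{\pspace{}}{}\le C\norm{f(2n\cdot)}{\pspace{}}{}$ is at the same level of detail as the paper's own unproved assertion, so it is acceptable. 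As for what each approach buys: the paper's argument is leaner in moment requirements, while yours produces the reusable intermediate bound $\ev{}_x U_t^n(f)^2\le Ce^{c\norm{x}{}{}}e^{n\lambda_p t}\norm{f(2n\cdot)}{\pspace{}}{2}$ for canonical kernels, which is of independent interest.
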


\begin{proof}
Let $\mathcal{J}$ be the collection of partitions of $\{1,\ldots,n\}$ i.e. of all sets $J = \{J_1,\ldots,J_k\}$, where $J_i$'s are nonempty, pairwise disjoint and $\bigcup_i J_i = \{1,\ldots,n\}$. Using \eqref{eq:inex} and notation introduced there we have
\begin{align}\label{eq:inex2}
                |X_t|^{-n/2}U_t^n(f) = |X_t|^{-n/2} \sum_{J \in \mathcal{J}} a_J V_t^{|J|}(f_J),
\end{align}
where $a_J$ are some integers depending only on the partition $J$. Since the cardinality of $\mathcal{J}$ depends only on $n$, it is enough to show that for each $J \in \mathcal{J}$ and some constants $C,c>0$ we have
\begin{displaymath}
\ev{}_x \rbr{|X_t|^{-n/2} | V_t^{|J|}(f_J)| 1_{\cbr {|X_t| \ge re^{\lambda_p t}}}} \le C \exp\cbr{c \norm{x}{}{}} r^{-n/2} \|f(2n\cdot)\|_{\pspace{}}.
\end{displaymath}

Let us thus consider $J = \{J_1,\ldots,J_k\}$ and let us assume that among the sets $J_i$ there are exactly $l$ sets of cardinality $1$, say $J_1,\ldots,J_l$.
We would like to use Fact \ref{fact:approximation}. To this end we have to express $V_t^k(f_J)$ as a sum of $V$-statistics with canonical kernels.
This can be easily done by means of Hoeffding's decomposition \eqref{eq:tmp666}. Since $f_J$ is already degenerate with respect to variables $x_1,\ldots,x_l$, we get
\begin{align}\label{eq:partial_Hoeffding}
|X_t|^{-n/2} V_t^k(f_{J}) = \sum_{I \subseteq \{l+1,\ldots,k\}} |X_t|^{|I| - n/2} V_t^{|I^c|}(\Pi_{I^c} f_J),
\end{align}
where $I^c := \cbr{1,\ldots, k} \setminus I$. Let us notice that $n \ge  2k-l$, so $n - k \ge k - l \ge |I|$, which gives
\begin{align}\label{eq:set_bounds}
n - 2|I| \ge k - |I| = |I^c|.
\end{align}
Thus we have
\begin{multline*}
    \ev{}_x \rbr{|X_t|^{ |I| - n/2} |V_t^{|I^c|}(\Pi_{I^c} f_J)|1_{\cbr{|X_t| \ge re^{\lambda_p t}}}} \le r^{-n/2 + |I|} \ev{} \rbr {e^{- \lambda_p t (n - 2|I|)/2}|V_t^{|I^c|}(\Pi_{I^c} f_J)| }\\
    \le r^{-n/2 + |I|} \ev{} \rbr{e^{- \lambda_p t |I^c|/2}|V_t^{|I^c|}(\Pi_{I^c} f_J)|}
    \le C \exp\cbr{c \norm{x}{}{}}r^{-n/2 + |I|}\|\Pi_{I^c} f_J\|_{\pspace{}}, 
\end{multline*}
for $I \neq \{1,\ldots,k\}$, where in the third inequality we used Fact \ref{fact:approximation}. One can check that for any $n\geq 2$ there exists $C>0$ such that for any $I,J\subset \cbr{1,2,\ldots,n}$ we have $\norm{\Pi_I f}{\pspace{}}{} \leq C \norm{f}{\pspace{}}{}$ and  $\norm{f_J}{\pspace{}}{} \leq \norm{f(2n\cdot)}{\pspace{}}{}$. Therefore it remains to bound the contribution from $I = \{1,\ldots,k\}$ (in the case $l=0$). But in this case $I^c = \emptyset$, so $|V_t^{|I^c|}(\Pi_{I^c} f_J)| = |\Pi_{I^c} f_J| =| \langle \eq^{k},f_J\rangle| \le C\|f\|_\pspace{}$ and $\exp(-\lambda_pt(n-2|I|)) \le 1$, which easily gives the desired estimate.

\end{proof}
In an analogous way, replacing Fact \ref{fact:approximation} by Fact \ref{fact:approximation_critical} one proves

\begin{cor} \label{cor:helper_crit} Let $\cbr{X_t}_{t\geq 0}$ be the OU branching system with $\lambda_p=2\mu$. There exist constants $C, c$ such that for any canonical $f \in \pspace{}(\R^{nd})$  and $r \in (0,1)$ we have for $t \ge 1$,
\begin{displaymath}
\ev{}\rbr{ 1_{Ext^c} (t|X_t|)^{-n/2}|U_t^n(f)| 1_{\{|X_t| \ge re^{\lambda_p t}\}}} \leq C \exp\cbr{c \norm{x}{}{}} r^{-n/2} \|f(2n\cdot)\|_{\pspace{}}.
\end{displaymath}
\end{cor}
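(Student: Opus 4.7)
The strategy is to repeat the proof of Corollary \ref{cor:helper} line by line, replacing Fact \ref{fact:approximation} by Fact \ref{fact:approximation_critical} and carrying along the extra factor $t^{-n/2}$. Specifically, I would first apply the inclusion-exclusion identity \eqref{eq:inex} to reduce matters to estimating, for each partition $J \in \mathcal{J}$ with $k = |J|$ blocks, the quantity
\[
\ev{}_x\rbr{1_{Ext^c}\, (t|X_t|)^{-n/2}\, |V_t^{k}(f_J)|\, 1_{\cbr{|X_t| \ge r e^{\lambda_p t}}}}.
\]
Letting $l$ denote the number of singleton blocks of $J$, the kernel $f_J$ is automatically canonical in its first $l$ arguments, so the partial Hoeffding decomposition \eqref{eq:partial_Hoeffding} produces a finite sum of terms of the form $|X_t|^{|I|-n/2}\, V_t^{|I^c|}(\Pi_{I^c} f_J)$ indexed by $I \subseteq \cbr{l+1,\ldots, k}$, with each $\Pi_{I^c} f_J$ a canonical kernel in $|I^c|$ variables.

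On the event $\cbr{|X_t| \ge r e^{\lambda_p t}}$ I would bound $|X_t|^{|I|-n/2}$ from above by $r^{|I|-n/2}\, e^{(|I|-n/2)\lambda_p t}$. Combined with the prefactor $t^{-n/2}$, with the inequality $n - 2|I| \ge |I^c|$ from \eqref{eq:set_bounds}, with the elementary bound $n \ge |I^c|$, and with the hypothesis $t \ge 1$, this gives
\[
t^{-n/2}\, |X_t|^{|I|-n/2}\, 1_{\cbr{|X_t| \ge r e^{\lambda_p t}}} \;\le\; r^{|I|-n/2}\, t^{-|I^c|/2}\, e^{-(|I^c|/2)\lambda_p t}.
\]
An application of the Schwarz inequality and Fact \ref{fact:approximation_critical} to the canonical kernel $\Pi_{I^c} f_J$ then yields
\[
\ev{}_x\rbr{t^{-|I^c|/2}\, e^{-(|I^c|/2)\lambda_p t}\, |V_t^{|I^c|}(\Pi_{I^c} f_J)|} \;\le\; C\, \exp\cbr{c\norm{x}{}{}}\, \norm{\Pi_{I^c} f_J}{\pspace{}}{},
\]
and the standard inequalities $\norm{\Pi_{I^c} f_J}{\pspace{}}{} \le C\norm{f(2n\cdot)}{\pspace{}}{}$ and $r^{|I|-n/2} \le r^{-n/2}$ (valid since $r \in (0,1)$) handle all terms except one.

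The exceptional term corresponds to $I = \cbr{1,\ldots,k}$, which can only arise when $l = 0$; in that case every block of $J$ has at least two elements, so $k \le n/2$. Here $|I^c| = 0$ and $V_t^{0}(\Pi_\emptyset f_J) = \ddp{f_J}{\eq^{\otimes k}}$ is a deterministic constant bounded by $C\norm{f(2n\cdot)}{\pspace{}}{}$, while the residual factor $t^{-n/2}\, e^{-(n/2-k)\lambda_p t}$ is at most $1$ for $t \ge 1$ and $k \le n/2$. Everything thus reduces to routine verification; the only genuinely new point—and the sole place where the hypothesis $t \ge 1$ enters—is the step absorbing the extra $t^{-n/2}$ into the normalization required by Fact \ref{fact:approximation_critical}, which works exactly because of the exponent inequality $n - 2|I| \ge |I^c|$ combined with $n \ge |I^c|$. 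Beyond this, the argument is a verbatim transcription of the proof of Corollary \ref{cor:helper}, so no real obstacle should arise.
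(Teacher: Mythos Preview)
Your proposal is correct and follows precisely the approach indicated by the paper, which states only that the proof is obtained ``in an analogous way, replacing Fact \ref{fact:approximation} by Fact \ref{fact:approximation_critical}.'' You have carefully filled in the one nontrivial detail—absorbing the extra $t^{-n/2}$ via the inequalities $n-2|I|\ge |I^c|$ and $n\ge |I^c|$ together with $t\ge 1$—which is exactly what the paper leaves to the reader.
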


We also have an analogous statement in the supercritical case.

\begin{cor} \label{cor:helper_supercrit} Let $\cbr{X_t}_{t\geq 0}$ be the OU branching system with $\lambda_p>2\mu$. There exist constants $C, c$ such that for any canonical $f \in \pspace{}(\R^{nd})$ we have
\begin{displaymath}
\ev{}\rbr{  e^{-n(\lambda_p - \mu)t}|U_t^n(f)|} \leq C \exp\cbr{c \norm{x}{}{}} \|f(2n\cdot)\|_{\pspace{}}.
\end{displaymath}
\end{cor}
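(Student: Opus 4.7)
The plan is to mirror the proofs of Corollaries \ref{cor:helper} and \ref{cor:helper_crit}; the simplification here is that the normalisation $e^{-n(\lambda_p-\mu)t}$ is deterministic, so no truncation on $\{|X_t|\ge re^{\lambda_p t}\}$ is needed. I start by applying the inclusion--exclusion identity \eqref{eq:inex} to write $U_t^n(f)=\sum_{J\in\mathcal{J}} a_J V_t^{|J|}(f_J)$, which reduces the claim to estimating $\ev{}_x\bigl(e^{-n(\lambda_p-\mu)t}|V_t^{k}(f_J)|\bigr)$ for a single partition $J=\{J_1,\ldots,J_k\}$ with, say, $l$ singletons (so that $f_J$ is already canonical in the corresponding variables). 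Invoking the partial Hoeffding decomposition \eqref{eq:partial_Hoeffding} further reduces the task to bounding a single summand $e^{-n(\lambda_p-\mu)t}|X_t|^{|I|}|V_t^{|I^c|}(\Pi_{I^c} f_J)|$ for $I\subseteq\{l+1,\ldots,k\}$, where $\Pi_{I^c} f_J$ is a canonical kernel in $|I^c|$ variables.

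For each such summand I apply Cauchy--Schwarz and then bound the two factors separately: Corollary \ref{cor:godsavethequeen} gives $\ev{}_x|X_t|^{2|I|}\le Ce^{2|I|\lambda_p t}$, while Fact \ref{fact:approximation_supercritical} gives $\ev{}_x V_t^{|I^c|}(\Pi_{I^c}f_J)^2\le C\exp(c\|x\|)e^{2|I^c|(\lambda_p-\mu)t}\|\Pi_{I^c}f_J\|_{\pspace{}}^2$. Combining the two factors and writing $m=|I^c|=k-|I|$, the remaining deterministic time factor is
\[
\exp\bigl([\,|I|\lambda_p + m(\lambda_p-\mu) - n(\lambda_p-\mu)\,]t\bigr) = \exp\bigl([\,|I|\mu-(n-k)(\lambda_p-\mu)\,]t\bigr).
\]

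The only non-routine point is showing that this exponent is $\le 0$, and this is precisely where the hypothesis $\lambda_p>2\mu$ is essential. The combinatorial constraint $n\ge l+2(k-l)=2k-l$, reflecting the fact that the non-singleton sets of $J$ have cardinality at least two, yields $n-k\ge k-l\ge |I|$; meanwhile $\lambda_p>2\mu$ is equivalent to $\lambda_p-\mu>\mu$. Combining these, $(n-k)(\lambda_p-\mu)\ge|I|(\lambda_p-\mu)>|I|\mu$, with equality only in the trivial case $|I|=0=n-k$ (the partition into singletons with $I=\emptyset$, in which case Fact \ref{fact:approximation_supercritical} is applied directly to $f$ itself). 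Finally, $\|\Pi_{I^c} f_J\|_{\pspace{}}\le C\|f(2n\cdot)\|_{\pspace{}}$ by boundedness of Hoeffding projections on $\pspace{}$ together with the elementary rescaling bound $\|f_J\|_{\pspace{}}\le \|f(2n\cdot)\|_{\pspace{}}$, exactly as in the preceding corollaries. This completes the estimate and yields the asserted inequality.
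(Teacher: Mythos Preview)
Your proof is correct and follows essentially the same route as the paper: inclusion--exclusion \eqref{eq:inex}, partial Hoeffding decomposition \eqref{eq:partial_Hoeffding}, Cauchy--Schwarz, then Corollary \ref{cor:godsavethequeen} for the $|X_t|^{|I|}$ factor and Fact \ref{fact:approximation_supercritical} for the canonical $V$-statistic factor, with the same combinatorial inequality $n-k\ge |I|$ combined with $\lambda_p-\mu>\mu$ to kill the time exponent. The only cosmetic difference is that the paper singles out the case $I^c=\emptyset$ (i.e.\ $l=0$, $I=\{1,\ldots,k\}$), where $V_t^{0}(\Pi_\emptyset f_J)=\langle\eq^{\otimes k},f_J\rangle$ is a constant and Fact \ref{fact:approximation_supercritical} is not literally stated; your exponent bound covers this case too, and the missing $n=0$ instance of Fact \ref{fact:approximation_supercritical} is of course trivial.
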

\begin{proof} Using the notation from the proof of Fact \ref{cor:helper}, we get
\begin{align*}
\ev{} e^{-n(\lambda_p-\mu)t} |V_t^k(f_{J})| &\le \ev{}\sum_{I \subseteq \{l+1,\ldots,k\}} \frac{|X_t|^{|I|}}{e^{\lambda_p |I| t}} e^{-n(\lambda_p-\mu)t + \lambda_p |I| t} |V_t^{|I^c|}(\Pi_{I^c} f_J)|\\
& \le \sum_{I \subseteq \{l+1,\ldots,k\}} \Big\|\frac{|X_t|^{|I|}}{e^{\lambda_p |I| t}} \Big\|_2\|e^{-n(\lambda_p-\mu)t + \lambda_p |I| t} V_t^{|I^c|}(\Pi_{I^c} f_J)\|_2
\end{align*}
Let us note that by $\lambda_p > 2\mu$  and \eqref{eq:set_bounds} we get $n(\lambda_p - \mu) - \lambda_p|I| \ge n(\lambda_p - \mu) - 2 (\lambda_p - \mu)|I| \ge |I^c|(\lambda_p - \mu)$.
Thus the summands on the right hand side above for $I \neq \{1,\ldots,k\}$ can be bounded using Corollary \ref{cor:godsavethequeen} and Fact \ref{fact:approximation_supercritical} by
\begin{displaymath}
C \exp(c\|x\|)\|\Pi_{I^c} f_J\|_\pspace{} \le C^2\exp(c\|x\|)\|f(2n\cdot)\|
\end{displaymath}
(the last inequality is analogous as in the proof of Fact \ref{cor:helper}).

The contribution from $I = \{1,\ldots,k\}$   (in the case $l=0$) also can be bounded like in Fact \ref{cor:helper}. Namely, $I^c = \emptyset$, so $|V_t^{|I^c|}(\Pi_{I^c} f_J)| = |\Pi_{I^c} f_J| =| \langle \eq^{k},f_J\rangle| \le C\|f\|_\pspace{}$ and $\exp(-n(\lambda_p-\mu)t + \lambda_p |I| t) \le \exp(-(n-2|I|)(\lambda_p-\mu))\le 1$, which easily gives the desired estimate.
\end{proof}

Let $\mu_1, \mu_2$ be two probability measures on $\R$, and $\Lip(1)$ be the space of 1-Lipschitz functions $\R\mapsto [-1,1]$. We define
\begin{equation}
    m(\mu_1, \mu_2) := \sup_{g\in \Lip(1)} |\ddp{g}{\mu_1} - \ddp{g}{\mu_2}|. \label{eq:weakMetric}
\end{equation}
It is well known that $m$ is a distance metrizing the weak convergence (see e.g. \cite[Theorem 11.3.3]{Dudley:2002}). One easily checks that when $\mu_1, \mu_2$ correspond to two random variables $X_1,X_2$ on the same probability space then we have
\begin{equation}
    m(\mu_1, \mu_2) \leq \norm{X_1 - X_2}{1}{}\leq \sqrt{\norm{X_1 - X_2}{2}{}}. \label{eq:weakL2estimation}
\end{equation}

\begin{fact} \label{fact:tmp19_crit} Let $\cbr{X_t}_{t\geq 0}$ be the OU branching system starting from $x$ and $\lambda_p<2\mu$. For any $n \geq 2 $ there exists a function $l_n:\R_+ \mapsto \R_+$, fulfilling $\lim_{s\rightarrow 0} l_n(s) = 0$ and such that for any canonical $f_1, f_2 \in \pspace{}(\R^{nd})$ and any $t > 1$ we have
    \[
        m(\mu_1,\mu_2) \leq l_n(\norm{ f_1( 2n\cdot) - f_2(2n\cdot) }{\pspace{}}{}),
    \]
    where $\mu_1 \sim |X_t|^{-n/2} U_t^n(f_1)$, $\mu_2 \sim |X_t|^{-n/2} U_t^n(f_2)$ (the $U$-statistics are considered here conditionally on $Ext^c$).
\end{fact}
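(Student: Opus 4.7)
The plan is to exploit linearity of $U$-statistics in order to reduce the comparison of $\mu_1$ and $\mu_2$ to an $L^1$-type estimate for the normalised $U$-statistic of the difference $g:=f_1-f_2$, and then to apply Corollary~\ref{cor:helper} after truncating the exceptional event on which $|X_t|$ is much smaller than $e^{\lambda_p t}$. The main obstacle will be controlling this exceptional event uniformly in $t$.

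The starting point is the observation that $g$ is again a canonical kernel in $\pspace{}(\R^{nd})$, and that by linearity $|X_t|^{-n/2}U_t^n(f_1)-|X_t|^{-n/2}U_t^n(f_2)=|X_t|^{-n/2}U_t^n(g)$. Using \eqref{eq:weakMetric} together with the bound $|\psi(a)-\psi(b)|\le |a-b|\wedge 2$ for $\psi\in\Lip(1)$, and splitting on the event $A_r:=\cbr{|X_t|\ge re^{\lambda_p t}}$ with $r\in(0,1)$, one obtains
\[
    m(\mu_1,\mu_2)\le \ev{}\sbr{|X_t|^{-n/2}|U_t^n(g)|\,1_{A_r}\,\big|\,Ext^c}+2\,\p(A_r^c\mid Ext^c).
\]
The first term is controlled directly by Corollary~\ref{cor:helper}: after dividing by the fixed positive constant $\p(Ext^c)=(2p-1)/p$ it is at most $C\eexp{c\norm{x}{}{}}\,r^{-n/2}\norm{g(2n\cdot)}{\pspace{}}{}$.

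For the second term the key claim is that there exists $\phi:(0,1)\to [0,1]$ with $\phi(r)\to 0$ as $r\to 0^+$ such that $\sup_{t\ge 1}\p(V_t<r\mid Ext^c)\le\phi(r)$, where $V_t:=e^{-\lambda_p t}|X_t|$. Two complementary observations will combine to yield this. Since the process starts from a single particle, on $Ext^c$ one has $|X_t|\ge 1$ and hence $V_t\ge e^{-\lambda_p t}$, so the probability above vanishes as long as $t<-\log r/\lambda_p$; for larger $t$, the $L^2$-convergence $V_t\to V_\infty$ stated in Fact~\ref{fact:aaa}, together with the continuity at $0$ of the distribution function of $W$, gives $\p(V_t<r\mid Ext^c)\to\p(W<r)$, which tends to $0$ as $r\to 0^+$.

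Collecting the two estimates and writing $\delta:=\norm{g(2n\cdot)}{\pspace{}}{}$ produces
\[
    m(\mu_1,\mu_2)\le C_x\,r^{-n/2}\delta+2\phi(r),
\]
so defining $l_n(\delta):=\inf_{r\in(0,1)}\cbr{C_x r^{-n/2}\delta+2\phi(r)}$ (equivalently, taking $r=\delta^{\alpha}$ for some $0<\alpha<2/n$) yields a function $l_n$ with $l_n(\delta)\to 0$ as $\delta\to 0$, as required. The only non-routine ingredient is the uniform-in-$t$ bound on $\p(V_t<r\mid Ext^c)$ sketched above; everything else follows by linearity and a truncation/balancing argument.
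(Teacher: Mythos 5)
Your proposal is correct and follows essentially the same route as the paper: bound $m(\mu_1,\mu_2)$ via the Lipschitz test functions, truncate on $\cbr{|X_t|\ge re^{\lambda_p t}}$, apply Corollary \ref{cor:helper} to the canonical difference kernel on the good event, and use the uniform-in-$t$ smallness of $\p(|X_t|e^{-\lambda_p t}<r \mid Ext^c)$ (which the paper justifies by the same two observations, $|X_t|\ge 1$ on $Ext^c$ and convergence to the absolutely continuous limit $W$) on the bad event. The only cosmetic difference is that you balance by taking an infimum over $r$ (or $r=\delta^{\alpha}$, $\alpha<2/n$), while the paper fixes $r=\norm{f_1(2n\cdot)-f_2(2n\cdot)}{\pspace{}}{1/n}$.
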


\begin{proof}
Let us fix $g \in \Lip(1)$. We consider
    \begin{multline*}
        L:=|\ddp{g}{\mu_1} - \ddp{g}{\mu_2}| = \left| \ev{} g\rbr{ |X_t|^{-n/2} U_t^n(f_1)} - \ev{} g\rbr{|X_t|^{-n/2} U_t^n(f_2)} \right| \\
        \leq  \ev{}\left| g\rbr{ |X_t|^{-n/2} U_t^n(f_1)} -  g\rbr{|X_t|^{-n/2} U_t^n(f_2)} \right|.
    \end{multline*}
    Let $h(x):=f_1(2nx)-f_2(2nx)$, take $r := \|h\|_{\pspace{}}^{1/n}$ and assume that $r < 1$. Then by Corollary \ref{cor:helper} we get
\begin{align*}
\ev{}\left| g\rbr{ |X_t|^{-n/2} U_t^n(f_1)} -  g\rbr{|X_t|^{-n/2} U_t^n(f_2)} \right|1_{\cbr{|X_t| \ge re^{-\lambda_pt}}} \le C_n\exp\cbr{c_n \norm{x}{}{}}\|h\|_{\pspace{}}^{1/2}.
\end{align*}
On the other hand,
\begin{align*}
\ev{}_x\left| g\rbr{ |X_t|^{-n/2} U_t^n(f_1)} -  g\rbr{|X_t|^{-n/2} U_t^n(f_2)} \right|1_{\cbr{|X_t| < re^{-\lambda_pt}}} \le 2\norm{g}{\infty}{}\pr{|X_t|e^{-\lambda_p t} < r}.
\end{align*}
Since on $Ext^c$ we have $|X_t|\geq 1$ and $|X_t|e^{-\lambda_p t}$ converges  to an absolutely continuous random variable,
\begin{displaymath}
\lim_{r\to 0+}\sup_t \pr{|X_t|e^{-\lambda_p t} < r} = 0,
\end{displaymath}
which ends the proof.
\end{proof}

An analogous proof using Corollary \ref{cor:helper_crit} gives a counterpart of the above fact in the critical case.
\begin{fact} \label{fact:tmp19} Let $\cbr{X_t}_{t\geq 0}$ be the OU branching system starting from $x$ and $\lambda_p=2\mu$. For any $n \geq 2 $ there exists a function $l_n:\R_+ \mapsto \R_+$, fulfilling $\lim_{s\rightarrow 0} l_n(s) = 0$ and such that for any canonical $f_1, f_2 \in \pspace{}(\R^{nd})$ and any $t > 0$ we have
    \[
        m(\mu_1,\mu_2) \leq l_n(\norm{ f_1( 2n\cdot) - f_2(2n\cdot) }{\pspace{}}{}),
    \]
    where $\mu_1 \sim (t|X_t|)^{-n/2} U_t^n(f_1)$, $\mu_2 \sim (t|X_t|)^{-n/2} U_t^n(f_2)$ (the $U$-statistics are considered here conditionally on $Ext^c$).
\end{fact}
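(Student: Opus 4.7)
The plan is to mimic the proof of Fact \ref{fact:tmp19_crit} almost verbatim, substituting the critical normalization $(t|X_t|)^{-n/2}$ for $|X_t|^{-n/2}$ and invoking Corollary \ref{cor:helper_crit} in place of Corollary \ref{cor:helper}. Fix $g\in \Lip(1)$. Since $g$ is 1-Lipschitz with values in $[-1,1]$ and since $f\mapsto U_t^n(f)$ is linear,
\[
|\ddp{g}{\mu_1}-\ddp{g}{\mu_2}| \leq \ev{}\min\cbr{(t|X_t|)^{-n/2}|U_t^n(f_1-f_2)|,\;2}.
\]
Note that $f_1-f_2$ is canonical whenever $f_1,f_2$ are, so the kernel appearing on the right is still a canonical $\pspace{}$-function.

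Next I would set $h(x) := f_1(2nx)-f_2(2nx)$, choose $r := \norm{h}{\pspace{}}{1/n}$ (assuming $r<1$; otherwise the statement holds trivially by enlarging the candidate $l_n$), and split the expectation according to whether $|X_t|\geq re^{\lambda_p t}$ or $|X_t|< re^{\lambda_p t}$. On the large set, Corollary \ref{cor:helper_crit} applied to $f_1-f_2$ yields, for $t\geq 1$,
\[
\ev{}\rbr{(t|X_t|)^{-n/2}|U_t^n(f_1-f_2)|\,1_{\cbr{|X_t|\geq re^{\lambda_p t}}}} \leq C\exp\cbr{c\norm{x}{}{}}\,r^{-n/2}\norm{h}{\pspace{}}{} = C\exp\cbr{c\norm{x}{}{}}\norm{h}{\pspace{}}{1/2}.
\]
On the complement, bound the integrand crudely by $2$ and use Fact \ref{fact:aaa}: conditionally on $Ext^c$, $|X_t|e^{-\lambda_p t}$ converges almost surely to the absolutely continuous random variable $W$, so $\sup_{t\geq 1}\pr{|X_t|e^{-\lambda_p t}<r}\to 0$ as $r\to 0$. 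Summing the two contributions gives a modulus of continuity of the form $l_n(s) = C_x s^{1/2} + 2\sup_{t\geq 1}\pr{|X_t|e^{-\lambda_p t}<s^{1/n}}$, which tends to $0$ as $s\to 0$.

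The main (and only mildly delicate) obstacle I anticipate is the behaviour for small $t$: the factor $t^{-n/2}$ in the normalization is singular at $0$, which is irrelevant for the intended asymptotic application but must be addressed to cover the regime $t\in(0,1]$ stated in the lemma. For this range, I would use a direct argument based on the finiteness of all moments of $|X_t|$ (Fact \ref{fact:aaa}) combined with the elementary bound $\norm{h}{\infty}{}\leq\norm{h}{\pspace{}}{}$, getting a crude estimate depending on $x$ and on $\inf_{t\in(0,1]} t^{n/2}$ restricted to a compact subinterval, and absorbing it into $l_n$. With that in hand, everything else is a carbon copy of the proof of Fact \ref{fact:tmp19_crit}, with Corollary \ref{cor:helper_crit} doing the work of Corollary \ref{cor:helper}.
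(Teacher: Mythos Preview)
Your proposal is correct and takes exactly the same route as the paper: the paper's entire proof reads ``An analogous proof using Corollary \ref{cor:helper_crit} gives a counterpart of the above fact in the critical case,'' and that is precisely what you have written out.

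One remark on the small-$t$ issue you raise: the paper's own argument (via Corollary \ref{cor:helper_crit}) is only stated for $t\ge 1$, and Fact \ref{fact:tmp19} is only ever applied in the proof of Theorem \ref{thm:ustatistics-critical} for $t\to\infty$. So the hypothesis ``$t>0$'' in the statement is almost certainly a typo for ``$t>1$'' (matching Fact \ref{fact:tmp19_crit}); your extra paragraph about $t\in(0,1]$ is not needed, and as written it would not close the gap anyway since $\inf_{t\in(0,1]}t^{n/2}=0$.
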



\subsection{Proofs of main theorems}
\label{sub:proof_of_}
\subsubsection{The law of large numbers}

\begin{proof}[Proof of Theorem \ref{fact:multiLLD}]
    Consider the random probability measure $\mu_t = |X_t|^{-1} X_t$ (recall that formally we identify $X_t$ with the corresponding counting measure). By Theorem \ref{thm:LLN} with probability one (conditionally on $Ext^c$), $\mu_t$ converges weakly to $\eq$. Thus, by Theorem 3.2 in \cite{Billingsley:1999cl}, $\mu_t^{\otimes n}$ converges weakly to $\eq^{\otimes n}$. But
    $\langle f,\mu_t^{\otimes n}\rangle = |X_t|^{-n}V_t^n(f)$, which gives the almost sure convergence $|X_t|^{-n}V_t^n(f) \to \langle f,\varphi\rangle$. Now it is enough to note that the number of ''off-diagonal'' terms in the sum \eqref{eq:inex2} defining $U_t^n(f)$ is of order $|X_t|^{n-1}$ and use the fact that $|X_t| \to \infty$ a.s. on $Ext^c$.

The proof for $f\in \pspace{}(\R^{nd})$ follows directly from the central limit theorems from Section \ref{sec:last} (we will not use Theorem \ref{fact:multiLLD} in their proof). 
\end{proof}

\subsubsection{CLT -- slow branching rate}

\begin{proof}[Proof of Fact \ref{fact:well-posed}] \label{proof:well-posed} The sum \eqref{eq:limitVaribale} is finite hence it is enough to prove our claim for one $L(f, \gamma)$. Without loss of generality let us assume that $E_\gamma=\cbr{(1,2), (3,4), (2k-1,2k)} $ and $A_\gamma = \cbr{2k+1, \ldots, n}$ (we recall notation in Section \ref{sec:definitions_and_notation}). Using the same notation as in \eqref{eq:mu2} we write
    \begin{multline*}
            J(z_{2k+1}, \ldots, z_n) := \rbr{\prod_{(j,k)\in E_\gamma} {\int \mu_2({\dd{z_{j,k}}}) }} H(f)( u_1, u_2, \ldots, u_n )\\ =
                \int_{D} \int_{D}\ldots \int_{D} H(f)(z_1, z_1, z_2, z_2, \ldots, z_{k}, z_{k}, z_{2k+1} \dots, z_n )\mu_2\rbr{\dd{z_1}} \mu_2\rbr{\dd{z_2}} \ldots \mu_2\rbr{\dd{z_k}}.
    \end{multline*}
    where $D:=\R_+\times \Rd$. We know that $L(f, \gamma) = I_{n-2k}(J(x_{2k+1}, \ldots, x_n))$. By the properties of the multiple stochastic integral \cite[Theorem 7.26]{Janson:1997fk} we know that $\ev{L(f, \gamma)}^2 \cleq \rbr{\prod_{i\in \cbr{2k+1, \ldots, n}} \int_D \mu_1(dz_i)} |J(z_{2k+1}, \ldots, z_n)|^2 $. Therefore we need to estimate
    \begin{multline}
        \int_{D} \int_{D}\ldots \int_{D}  H(f)(z_1^1, z_1^1, z_2^1, z_2^1, \ldots, z_{k}^1, z_{k}^1, z_{2k+1} \dots, z_n ) H(f)(z_1^2, z_1^2, z_2^2, z_2^2, \ldots, z_{k}^2, z_{k}^2, z_{2k+1} \dots, z_n ) \\
        \mu_2\rbr{\dd{z_1^1}} \mu_2\rbr{\dd{z_2^1}} \ldots \mu_2\rbr{\dd{z_k^1}} \mu_2\rbr{\dd{z_1^2}} \mu_2\rbr{\dd{z_2^2}} \ldots \mu_2\rbr{\dd{z_k^2}} \mu_1\rbr{\dd{z_{2k+1}}}  \ldots \mu_1\rbr{\dd{z_{n}}}  \leq (*). \label{eq:integral}
    \end{multline}
    We will now estimate $H(f)(z_1, z_2, \ldots, z_n)$. Let $Y_1, Y_2, \ldots, Y_n$ be i.i.d., $Y_i \sim \eq$. We define $Y_i(t) = x_i e^{-\mu t} + ou(t) Y_i $. Recall our notation $z_i = (s_i,x_i)$ and let $I = \{i \in \{1,\ldots,n\} \colon s_i \ge 1\}$, $I^c = \{1,\ldots,n\}\setminus I$. Let $\hat{f} := \hat{f}_{I}$ be defined according to \eqref{eq:smoothing}.

    Using \eqref{eq:semi-group}, Lemma \ref{lem:derivatives}, the assumption that $f$ is canonical and the semigroup property, we can rewrite \eqref{eq:defH} as
    \begin{align*}
            H(f)(z_1, z_2, \ldots, z_n) &:= \ev{} f(Y_1(s_1), \ldots, Y_n(s_n)) = \ev{} \hat{f}((Y_i(s_i-1))_{i\in I}, (Y_i(s_i))_{i\in I^c})\\
&=\ev{}\:\: \sum_{(\epsilon_i)_{i\in I} \in \{0,1\}^I} (-1)^{\sum_{i\in I} \epsilon_i} \hat{f}\bigg(\Big(Y_i(s_i-1) + \epsilon_i ({Y}_i-{Y}_i(s_i-1))\Big)_{i\in I}, \Big(Y_i(s_i)\Big)_{i\in I^c}\bigg)\\
            &= \ev{}  \bigg(\prod_{i\in I}\int_{{Y}_i}^{{Y}_i(s_i-1)} \dd{y_i} \bigg) \frac{\partial^{|I|}}{\partial y_I} \hat{f}\bigg(\Big(y_i\Big)_{i\in I}, \Big(Y_i(s_i)\Big)_{i\in I^c} \bigg).
    \end{align*}
By Lemma \ref{lem:approximationQuality} we have (in order to simplify the notation we calculate for $d=1$, the general case is an easy modification)
    \begin{align*}
        &|H(f)(z_1, z_2, \ldots, z_n)| \cleq \norm{f}{\pspace{}}{} \ev{} \prod_{i\in I} \left|\int_{{Y}_i}^{{Y}_i(s_i-1)} e^{|y_i|}\dd{y_i}\right| \prod_{i\in I^c}e^{|Y_i(s_i)|}\\
        &\leq  \norm{f}{\pspace{}}{} \prod_{i\in I} \ev{} \left| \int_{{Y}_i}^{{Y}_i(s_i-1)} \rbr{e^{y_i} + e^{-y_i}}\dd{y_i}\right|\prod_{i\in I^c} \ev{} e^{|Y_i(s_i)|} \\
        &\leq \norm{f}{\pspace{}}{}\bigg( \prod_{i\in I} \Big(\ev{} |\exp(Y_i(s_i-1)) - \exp(Y_i)| + \ev{} |\exp(-Y_i(s_i-1)) - \exp(-Y_i)|\Big)\bigg)\ev{} \prod_{i\in I^c} e^{|Y_i(s_i)|}.
    \end{align*}
    Obviously for any $x,y \in \R$ we have $\max\rbr{\exp(x),\exp(y)}\leq \exp(x)+\exp(y)$. Therefore by the mean value theorem we get
    \[
        \ev{} |\exp(Y_i(s_i-1)) - \exp(Y_i)| \leq \ev{} |Y_i(s_i-1) - Y_i| \exp(Y_i(s_i-1)) + \ev{} |Y_i(s_i-1) - Y_i| \exp(Y_i).
    \]
    Using the Schwarz inequality and performing easy calculations we get
\begin{multline*}
    \ev{} |\exp(Y_i(s_i-1)) - \exp(Y_i)| \\\leq \sqrt{\ev{}(Y_i(s_i-1) - Y_i)^2}\rbr{\sqrt{\ev \exp(2Y_i(s_i-1))} + \sqrt{\ev \exp(2Y_i)}} \cleq \exp\rbr{2|x_i|} e^{-\mu s_i}.
\end{multline*}

Similarly
\begin{displaymath}
\ev{} |\exp(-Y_i(s_i-1)) - \exp(-Y_i)| \cleq \exp\rbr{2|x_i|} e^{-\mu s_i}.
\end{displaymath}
Since we also have $\ev{} \exp(|Y_i(s_i)|) \cleq \exp(|x_i|)$, we have thus proved that
\begin{align}\label{eq:nonempty_case}
        |H(f)(z_1, z_2, \ldots, z_n)| \cleq  \norm{f}{\pspace{}}{} e^{2\sum_{i=1}^n|x_i|} \prod_{i\in I}  e^{-\mu s_i}
\end{align}

We use the above inequality to estimate (\ref{eq:integral}). To this end let us denote $D_0 = [0,1)\times \R^d$, $D_1 = [1,\infty)\times \R^d$. To simplify the notation let us introduce the following convention. For subsets $I_1,I_2 \subset   \{1,\ldots,k\}$, $I_3 \subset \{2k+1,\ldots,n\}$ and $i \in \{1,\ldots,n\}$ we will write $I_j(i) = 1$ if $i \in I_j$ and $I_j(i) = 0$ otherwise. Let us also denote
\begin{displaymath}
F(\mathbf{z}) = H(f)(z_1^1, z_1^1, z_2^1, z_2^1, \ldots, z_{k}^1, z_{k}^1, z_{2k+1} \dots, z_n ) H(f)(z_1^2, z_1^2, z_2^2, z_2^2, \ldots, z_{k}^2, z_{k}^2, z_{2k+1} \dots, z_n )
\end{displaymath}
for $\mathbf{z} = (z_1^1,\ldots,z_k^1,z_1^2,\ldots,z_k^2,z_{2k+1},\ldots,z_n)$.

With this notation we can estimate \eqref{eq:integral} as follows

\begin{align*}
(*) &= \sum_{I_1,I_2 \subset\{1,\ldots, k\}}\sum_{I_3 \subset \{2k+1,\ldots,n\} } \bigg(\prod_{i=1}^k \int_{D_{I_1(i)}} \mu_2(\dd{z_i^1}) \bigg)\bigg(\prod_{i=1}^k \int_{D_{I_2(i)}} \mu_2(\dd{z_i^2}) \bigg)\bigg(\prod_{i=2k+1}^n\int_{D_{I_3(i)}} \mu_2(\dd{z_i})\bigg)F(\mathbf{z})\\
& =: \sum_{I_1,I_2 \subset\{1,\ldots, k\}}\sum_{I_3 \subset \{2k+1,\ldots,n\}} A(I_1,I_2,I_3).
\end{align*}

Now, using (\ref{eq:nonempty_case}) in combination with the Fubini theorem, the definition of the measures $\mu_i$ (given in Section \ref{sec:slowBranching}) and our assumption $\lambda_p < 2\mu$, we get

\begin{align*}
A(I_1,I_2,I_3) \cleq &\norm{f}{\pspace{}}{2} \bigg(\prod_{i=1}^k \int_{D_{I_1(i)}} e^{4|x_i^1| - 2I_1(i)\mu s_i^1} \mu_2(\dd{z_i^1}) \bigg)\bigg(\prod_{i=1}^k \int_{D_{I_2(i)}} e^{4|x_i^2| - 2I_2(i)\mu s_i^2}\mu_2(\dd{z_i^2}) \bigg)\times \\
&\times \bigg(\prod_{i=2k+1}^n\int_{D_{I_3(i)}} e^{4|x_i| - 2I_3(i)\mu s_i}\mu_1(\dd{z_i})\bigg) \cleq \|f\|_{\pspace{}}^2.
\end{align*}

To conclude the proof we use the fact that $f\mapsto L(f,\gamma)$ is linear and $\norm{\cdot}{\pspace{}}{}$ is a norm.
\end{proof}

\begin{proof}[Proof of Theorem \ref{thm:ustatistics-slow}] For simplicity we concentrate on the third coordinate. The joint convergence can be easily obtained by a modification of the arguments below (using the joint convergence in Theorem \ref{thm:clt1} for $n=1$). In the whole proof we work conditionally on the set of non-extinction $Ext^c$.

Let us consider bounded continuous functions $f_1^l, f_2^l, \ldots, f_n^l$, $l=1,\ldots,m$, which are centred with respect to $\eq$ and set ${f}_l:=\otimes_{i=1}^n{f_i^l} $ and $f = \sum_{l=1}^m f_l$.
	In this case the $U$-statistic \eqref{eq:ustat} writes as
	\[
		U^n_t(f) = \sum_{l=1}^m \sum_{\substack{i_1, i_2, \ldots, i_n = 1, \\i_j\neq i_k, \text{ for } j\neq k}}^{|X_t|} {f}_1^l(X_t(i_1)) {f}_2^l(X_t(i_2)) \ldots  {f}_n^l(X_t(i_n)).
	\]
 Let $\gamma$ be a Feynman diagram labeled by $\cbr{1,2,\ldots,n}$, with edges $E_\gamma$ and unpaired vertices $A_\gamma$. Let
		\[
			S(\gamma):=\sum_{\substack{i_1, i_2, \ldots, i_n = 1, \\i_j = i_k, \text{ if } (j,k) \in E_\gamma}}^{|X_t|} f (X_t(i_1), X_t(i_2), \ldots,  X_t(i_n)).
		\]
		Decomposition \eqref{eq:inex} writes here as
\begin{equation}
		 U^n_t(f) = \sum_\gamma (-1)^{r(\gamma)} S(\gamma) + R. \label{eq:decomposition2}
\end{equation}
where the sum spans over all Feynman diagrams labeled by $\cbr{1, 2, \ldots, n}$, and the remainder $R$ is the sum of $V$-statistics corresponding to partitions of $\{1,\ldots,n\}$ containing at least one set with more than two elements. We will prove that $|X_t|^{-(n/2)} R \rightarrow 0$. To this end let us consider  $\rbr{\bigcup_r A_r} \cup \rbr{\bigcup_r B_r} \cup \rbr{\bigcup_r C_r} = \cbr{1,2,\ldots, n}$, a partition in which $|A_r|\geq 3$, $|B_r|=2$ and $|C_r|=1$. Each set in this partition denotes which indices are the same (e.g. if $\cbr{1,2,3} = A_1$ then $i_1 = i_2 = i_3$ in our sum). By our assumption there is at least one set $A_r$. For any $l=1,\ldots,m$,  we write
\begin{multline*}
	K_t^l := \prod_r \rbr{|X_t|^{-|A_r|/2}\!\!\!\!\!\!\!\!\!\!\!\!\! \sum_{ \substack{i_{k_1} = i_{k_2}=\ldots = i_{k_m}\\\text{where} \cbr{k_1, k_2,\ldots,k_m}=A_r }}\!\!\!\!\!\!\!\! \!\!\!\!\!{f}_{k_1}^l(X_t(i_{k_1})) {f}_{k_2}^l(X_t(i_{k_2}))\ldots {f}_{k_m}^l(X_t(i_{k_m}))} \\ \prod_r \rbr{|X_t|^{-1}\!\!\!\!\!\!\!\!\!\!\!\!\! \sum_{ \substack{i_{k_1} = i_{k_2}\\\text{where} \cbr{k_1, k_2}=B_r }}\!\!\!\!\!\!\!\! \!\!\!\!\!{f}_{k_1}^l(X_t(i_{k_1})) {f}_{k_2}^l(X_t(i_{k_2}))} \prod_r \rbr{|X_t|^{-1/2}\!\!\!\!\!\!\!\sum_{ \substack{i_{k_1}\\\text{where} \cbr{k_1}=C_r }}\!\!\!\! \!\!\!\!\!{f}_{k_1}^l(X_t(i_{k_1})) }.
\end{multline*}

By Theorem \ref{thm:LLN} the first term converges to $0$ and the second one converges to a finite limit. The third term, by Theorem \ref{thm:clt1}, converges (in law) to the product of Gaussian random variables. We conclude that $K_t^l \rightarrow^d 0$. Thus only the first summand of \eqref{eq:decomposition2} is relevant for the asymptotics of $|X_t|^{-(n/2)}U_t^n(f)$. Consider now
\[
	|X_t|^{-(n/2)} S(\gamma) = \sum_{l=1}^m \prod_{(j,k)\in E_\gamma} \rbr{|X_t|^{-1}\sum_{i=1}^{|X_t|}{f}_{j}^l(X_t(i)){f}_{k}^l(X_t(i))} \prod_{r\in A_\gamma}\rbr{|X_t|^{-1/2}\sum_{i=1}^{|X_t|}{f}_{r}^l(X_t(i))}.
\]
Let us denote  $Z_{f_j^l}(t):=|X_t|^{-1/2}\sum_{i=1}^{|X_t|}{f}_{j}^l(X_t(i))$. By  Theorem \ref{thm:clt1} and the Cramér-Wold device we get that
\[
	(Z_{f_j^l}(t))_{1\le j \le n, 1\le l\le m} \rightarrow^d (G_{f_j^l})_{1\le j \le n, 1\le l\le m},
\]
where $(G_{f_j^l})_{1\le j \le n, 1\le l\le m}$ is a centred Gaussian vector with the covariances
\begin{equation}
	\cov(G_{f_j^l},G_{f_k^l}) = \ddp{\eq}{{f}_j^l {f}_k^l } + 2\lambda p  \inti  \ddp{\eq}{ \rbr{ e^{(\lambda_p/2) s}\T{s} {f}_j^l } \rbr{ e^{(\lambda_p/2) s}\T{s} {f}_k^l } }  \dd{s}. \label{eq:tmp17}
\end{equation}
On the other hand by Theorem \ref{thm:LLN} one could easily obtain
\[
	|X_t|^{-1}\sum_{i=1}^{|X_t|}{f}_{j}^l(X_t(i)){f}_{k}^l(X_t(i)) \rightarrow \ddp{{f}_{j}^l{f}_{k}^l }{\eq}, \:\:a.s.
\]
We conclude that
\[
	\cbr{|X_t|^{-(n/2)} S(\gamma) }_\gamma\rightarrow^d  \cbr{\sum_{l=1}^m \prod_{(j,k)\in E_\gamma} \ddp{{f}_{j}^l{f}_{k}^l }{\eq} \prod_{r\in A_\gamma} G_{f_r^l}}_\gamma =:\cbr{v(\gamma)}_\gamma.
\]

By decomposition \eqref{eq:decomposition2} and the considerations above we obtain
\[
	|X_t|^{-(n/2)t} U^n_t(f) \rightarrow^d \sum_\gamma (-1)^{r(\gamma)} v(\gamma) =: L.
\]
We will now show that $L$ is equal to $L_1(f)$ given by \eqref{eq:limitVaribale}. By linearity of $L_1(f)$ it is enough to consider the case of $m=1$. We will therefore drop the superscript and write $f_i$ instead of $f_i^l$. We denote by $P(f_i, f_j):= \int_D H(f_i \otimes f_j)(z,z) \mu_2(\dd{z})$. By \eqref{eq:tmp17} and definition of $\mu_2$ given in \eqref{eq:mu2}
\[
	L=\sum_\gamma (-1)^{r(\gamma)} \prod_{(j,k)\in E_\gamma} \ddp{{f}_{j}{f}_{k} }{\eq} \prod_{r\in A_\gamma} G_{f_r} = \sum_\gamma (-1)^{r(\gamma)} \prod_{(j,k)\in E_\gamma} (\ev{} G_{f_j} G_{f_k} - P(f_j,f_k) ) \prod_{r\in A_\gamma} G_{f_r}.
\]
We now adopt the notation that $\eta \subset \gamma$ when $E_\eta \subset E_\gamma$ and write
\begin{multline*}
	L= \sum_\gamma (-1)^{r(\gamma)} \sum_{\eta \subset \gamma} (-1)^{r(\eta)} \prod_{(j,k)\in E_\gamma \setminus E_\eta} \ev{} G_{f_j} G_{f_k} \prod_{(j,k)\in E_\eta}P(f_j,f_k)  \prod_{r\in A_\gamma} G_{f_r} \\
	= \sum_{\eta } \prod_{(j,k)\in E_\eta}P(f_j,f_k)  \sum_{\gamma \supset \eta} (-1)^{r(\gamma)-r(\eta)}  \prod_{(j,k)\in E_\gamma \setminus E_\eta} \ev{} G_{f_j} G_{f_k}  \prod_{r\in A_\gamma} G_{f_r}.
\end{multline*}
Let us notice that the inner sum can be written as
\[
\sum_{\sigma} (-1)^{r(\sigma)}  \prod_{(j,k)\in E_\sigma } \ev{} G_{f_j} G_{f_k}  \prod_{r\in A_\sigma} G_{f_r},
\]
where $\sigma$ runs over all Feynman diagrams on the set of vertices $A_\eta$. Thus by \cite[Theorem 3.4 and Theorem 7.26]{Janson:1997fk}  this equals $ I_{|A_\eta|}\rbr{H(\otimes_{i\in A_\eta} f_i )}$ and in consequence
\[
	L= \sum_{\eta }  \prod_{(j,k)\in E_\eta}P(f_j,f_k)  I_{|A_\eta|}\rbr{H(\otimes_{i\in A_\eta} f_i )}.
\]

It is easy to notice that in the case of $f = \otimes_{i=1}^n f_i$ the expression above is equivalent to \eqref{eq:limitVaribale}. 
Let us now consider a function $f \in \pspace{}$. We put $h(x) := f(2n x)$. By Lemma \ref{lem:approx2} we may find a sequence of functions $\cbr{h_k}_k \subset span(A)$ such that $h_k \rightarrow h$ in $\pspace{}$. Next we define $f_k(x) := h_k(x/2n)$. Now by Fact \ref{fact:tmp19} we may approximate $|X_t|^{-(n/2)} U^n_t(f)$ with $|X_t|^{-(n/2)} U^n_t(f_k)$ uniformly in $t$. This together with Fact \ref{fact:well-posed} and standard metric-theoretic considerations concludes the proof.
\end{proof}
\subsubsection{CLT -- critical branching rate}

\begin{proof}[Proof of Fact \ref{fact:well-posed-critical} and Theorem \ref{thm:ustatistics-critical}] As in the subcritical case, we will focus on the third coordinate.

Recall the notation of Lemma \ref{lem:approx2} and note  that $A \subset \mathcal{L}^{\otimes n}$, where $\mathcal{L}^{\otimes n}$ denotes the algebraic tensor product (or more precisely its standard realization as a subspace of $L_2(\R^{nd},\prod_{i=1}^n\Phi(dx_i))$). Thus the operator $L_2$ is clearly well defined on $A\cap Can$.
In the course of the proof of the limit theorem we will show that $L_2$ is bounded on this space equipped with the norm $\|\cdot\|_{\pspace{}}$ and thus by Lemma \ref{lem:approx2} it extends uniquely to a bounded operator $L_2 \colon Can \to L_2(\Omega,\mathcal{F},\p)$.

The proof is slightly easier than the one of Theorem \ref{thm:ustatistics-slow} as, because of larger normalization, the notion of $U$-statistics and $V$-statistics coincide in the limit. Let us consider bounded continuous functions $f_1^l, f_2^l, \ldots, f_n^l,$ $l = 1,\ldots,m$, which are centred with respect to $\eq$ and denote $f:=\sum_{l=1}^m \otimes_{i=1}^n f_i^l $. By \eqref{eq:decomposition2} we have
\[
    U^n_t(f) - V^n_t(f) = \: \:\sum_{\gamma \text{ with at least one edge}}\: \:(-1)^{r(\gamma)}S(\gamma) + R,
\]
simply by the fact that the Feynman diagram without edges corresponds to $V^n_t(f)$. Analogously as in the proof of Theorem \ref{thm:ustatistics-slow} we have $R\rightarrow 0$. Let us now fix some diagram $\gamma$. Without loss of generality we assume that $E_\gamma = \cbr{(1,2),(2,3),\ldots,(2k-1,2k)}$ for $k\geq 1$. We have
\[
    (t|X_t|)^{-n/2} S(\gamma) = \sum_{l=1}^m \prod_{i\leq k}\Big((|X_t|t)^{-1} \ddp{X_t}{f_{2i}^l f_{2i+1}^l}\Big) \prod_{i>2k}\Big((|X_t|t)^{-1/2} \ddp{X_t}{f_i^l}\Big).
\]

By Theorem \ref{thm:cltCritical} each of the factors for $i > k$ converges in distribution, whereas by Theorem \ref{fact:multiLLD} each factor for $i \le k$ converges in probability to $0$, in consequence so does $(t|X_t|)^{-n/2} S(\gamma)$, which shows that $ (t|X_t|)^{-n/2}U^n_t(f)$ and $(t|X_t|)^{-n/2}V^n_t(f)$ are asymptotically equivalent.

Let us denote  $Z_{f_j^l}(t):=(t|X_t|)^{-1/2}\sum_{i=1}^{|X_t|}{f}_{j}^l(X_t(i))$. By  Theorem \ref{thm:cltCritical} and the Cramér-Wold device we get that
\[
   \Big(e^{-\lambda_p t} |X_t|, (Z_{f_j^l}(t))_{1\le i\le n,l\le m} \Big) \rightarrow^d \Big(W,(G_{f_j^l})_{1\le i\le n,l\le m}\Big),
\]
where $W$ has exponential distribution with parameter $(2p-1)/p$, $(G_{f_i^l})_{i\le n,l\le m}$ is centered Gaussian with the covariances given by \eqref{eq:criticalCovariances}, moreover $W$ is independent of $(G_{f_j^l})_{1\le j\le n,l\le m}$. Thus
$$(e^{-\lambda_p t} |X_t|, (t|X_t|)^{-n/2}V_t^n(f) ) \rightarrow^d \rbr{W,\sum_{l=1}^m \prod_{j=1}^n G_{f_j^l}} = (W,L_2(f)).$$

In particular $(te^{\lambda_p t})^{-n/2}V_t^n(f) \rightarrow^d W^{n/2}L_2(f)$. Moreover, by Fact \ref{fact:approximation_critical},
$\|(te^{\lambda_p t})^{-n/2}V_t^n(f)\|_2 \le C\|f\|_{\pspace{}}$ for some constant $C$ and since this bound is independent of $t$  we get
$\|W^{n/2}L_2(f)\|_1 \le C\|f\|_{\pspace{}}$.
Recall that for any $n$ there exists a constant $c > 0$ such that for any $N > 0$ and any $N$-variate polynomial $P(G_1,\ldots,G_N)$ of degree $n$ in a Gaussian random vector $(G_1,\ldots,G_N)$, we have
\begin{equation}
	\p(|P(G_1,\ldots,G_N)| \ge c\|P(G_1,\ldots,G_N)\|_2) \ge c. \label{eq:tmp21}
\end{equation}
This is a well known fact in the theory of polynomial chaos and follows e.g. from a combination of Theorem 3.2.5, Theorem 3.2.10  and Proposition 3.3.1. (i.e. the Paley-Zygmund inequality) in \cite{Pena:1999bh}.  Note that $c$ does not depend on $N$, which is crucial in our application.
Let $f\in A\cap Can$. Now, for sufficiently large $D$, by Markov's inequality and the fact that $W$ is with probability one strictly positive,
\begin{displaymath}
\p(|L_2(f)| \ge D^2\|f\|_{\pspace{}}) \le \p(|W^{n/2}L_2(f)| \ge D\|f\|_{\pspace{}}) + \p(W^{-n/2} > D) \le \frac{C}{D} + \p(W^{-n/2} > D)
< c,
\end{displaymath}
which, together with \eqref{eq:tmp21}, implies that $\|L_2(f)\|_2 \le c^{-1}D^2\|f\|_{\pspace{}}$ and in consequence the operator $L_2$ is bounded on $A\cap Can$. In particular it admits a unique extension to a bounded operator $L_2\colon Can \to L_2(\Omega,\mathcal{F},\p)$, which proves Fact \ref{fact:well-posed-critical}.

Theorem \ref{thm:ustatistics-critical} follows now by the already established case of $f \in span(A)$, Fact \ref{fact:tmp19_crit} and standard approximation arguments.

\end{proof}

\subsubsection{CLT -- supercritical branching rate}

\begin{proof}[Proof of Theorem \ref{thm:ustatistics-fast}] Again we concentrate on the third coordinate. The joint convergence can be easily obtained by a modification of the arguments below (using the joint convergence in Theorem \ref{thm:clt2} for $n=1$). 

First, note that $U$-statistics and $V$-statistics are asymptotically equivalent. The argument is analogous to the one presented in the proof of Theorem \ref{thm:ustatistics-critical}, since under assumption $\lambda_p>2\mu$ we have
\begin{displaymath}
\frac{|X_t|}{\exp(2(\lambda_p - \mu)t)} \to 0\; \textrm{a.s.}
\end{displaymath}
as $n \to \infty$ and consequently we can disregard the sum over all multi-indices $(i_1,\ldots,i_n)$ in which the coordinates are not pairwise distinct.

 Let us consider bounded continuous functions $f_1^l, f_2^l, \ldots, f_n^l \colon \R^d \to \R,$ $l = 1,\ldots,m$, which are centred with respect to $\eq$ and denote $f:=\sum_{l=1}^m \otimes_{i=1}^n f_i^l $. By Theorem \ref{thm:clt2} for $n=1$ we have
\[
    e^{-n(\lambda - \mu)t} V_t^n(f) = \sum_{l=1}^m \prod_{i=1}^n\rbr{e^{-n(\lambda_p - \mu)t} V_t^1(f_i^l)} \rightarrow  \sum_{l=1}^m\prod_{i=1}^n \rbr{\ddp{\grad f_i^l}{\eq}\circ H_\infty } = {L}_3(f),\quad \text{in probability}.
\]
Before our final step we recall that the convergence in probability can be metrised by $d(X,Y) := \ev{}\rbr{\frac{|X-Y|^2}{|X-Y|^2+1}}\leq \ev{} |X-Y|^2$. Let us now consider a function $f \in \pspace{}$. By Lemma \ref{lem:approx2} we may find a sequence of functions $\cbr{f_k} \subset span(A)$ such that $f_k \rightarrow f$ in $\pspace{}$. Now by Corollary \ref{cor:helper_supercrit}  we may approximate $e^{-n(\lambda_p-\mu)t} U^n_t(f)$ with $e^{-n(\lambda_p-\mu)t} U^n_t(f_k)$ uniformly in $t$ in the sense of metric $d$. Moreover, one can easily show that $\lim_{k\rightarrow +\infty}d(\tilde{L}_3(f_k), \tilde{L}_3(f))= 0$. This concludes the proof.
\end{proof}

\section{Remarks on the non-degenerate case\label{sec:last}}

Let us remark that as in the case of $U$-statistics of i.i.d. random variables, by combining the results for completely degenerate $U$-statistics with the Hoeffding decomposition, we can obtain limit theorems for general $U$-statistics, with normalization, which depends on the order of degeneracy of the kernel. For instance, in the slow branching case Theorem \ref{thm:ustatistics-slow}, the Hoeffding decomposition and the fact that $\Pi_k \colon \pspace{}(\R^{nd}) \to \pspace{}(\R^{nk})$ is continuous, give the following
\begin{cor} Let $\lambda_p <2\mu$ and $f \in \pspace{}(\R^{nd})$ be symmetric and degenerate of order $k-1$. Then conditionally on $Ext^c$, $|X_t|^{-(n-k/2)}U_t^n(f - \langle f, \eq^{\otimes n}\rangle)$ converges in distribution to $\binom{n}{k}L_1(\Pi_k f)$.
\end{cor}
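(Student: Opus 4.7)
The plan is to combine the Hoeffding decomposition with Theorem \ref{thm:ustatistics-slow} applied termwise. Since $f$ is symmetric, the Hoeffding decomposition recalled in Section \ref{sec:aux_Ustat} gives
\begin{displaymath}
U_t^n(f - \ddp{f}{\eq^{\otimes n}}) = \sum_{j=1}^n \binom{n}{j} \frac{(|X_t|-j)!}{(|X_t|-n)!}\, U_t^j(\Pi_j f),
\end{displaymath}
the $j=0$ contribution being cancelled by subtracting $\ddp{f}{\eq^{\otimes n}} = \Pi_0 f$. By the hypothesis of degeneracy of order $k-1$ we have $\Pi_j f \equiv 0$ for $1 \le j \le k-1$, so only indices $j \ge k$ survive.

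Next I would use the elementary identity $\frac{(|X_t|-j)!}{(|X_t|-n)!} = |X_t|^{n-j}(1 + O(|X_t|^{-1}))$, valid on the event $\{|X_t| \ge n\}$, which holds for large $t$ on $Ext^c$ since $|X_t| \to \infty$ almost surely there. After dividing by $|X_t|^{n-k/2}$ the $j$-th summand becomes
\begin{displaymath}
\binom{n}{j}(1+o(1))\, |X_t|^{(k-j)/2} \cdot \frac{U_t^j(\Pi_j f)}{|X_t|^{j/2}}.
\end{displaymath}
By the continuity of $\Pi_j \colon \pspace{}(\R^{nd}) \to \pspace{}(\R^{jd})$ stated in the remark preceding the corollary, each $\Pi_j f$ belongs to $\pspace{}(\R^{jd})$ and is canonical by construction. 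Theorem \ref{thm:ustatistics-slow} therefore applies and yields
\begin{displaymath}
|X_t|^{-j/2}\, U_t^j(\Pi_j f) \rightarrow^d L_1(\Pi_j f)
\end{displaymath}
conditionally on $Ext^c$, for each $j\ge k$.

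For $j = k$ the prefactor $|X_t|^{(k-j)/2}$ equals $1$, so this contribution converges in distribution to $\binom{n}{k} L_1(\Pi_k f)$. For $j > k$ the prefactor $|X_t|^{(k-j)/2}$ tends to $0$ almost surely on $Ext^c$, while $|X_t|^{-j/2} U_t^j(\Pi_j f)$ is tight; Slutsky's lemma then forces these higher-order terms to vanish in probability. Summing the contributions and invoking Slutsky once more yields the claimed limit $\binom{n}{k} L_1(\Pi_k f)$.

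I do not expect any serious obstacle here: this is the standard passage from the canonical case to the degenerate case in the theory of $U$-statistics, and the only nuance is that the error $O(|X_t|^{-1})$ in the factorial asymptotics, multiplied by a tight sequence, is negligible in probability — which is automatic on $Ext^c$ since $|X_t|$ diverges almost surely.
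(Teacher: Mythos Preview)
Your argument is correct and is exactly the approach the paper indicates: apply the Hoeffding decomposition, use that $\Pi_j f\in\pspace{}(\R^{jd})$ is canonical, invoke Theorem \ref{thm:ustatistics-slow} termwise, and eliminate the $j>k$ contributions via Slutsky. The paper does not give a detailed proof beyond the sentence preceding the corollary, and your write-up fills in precisely those routine details.
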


Similar results can be derived in the remaining two cases. Using the fact that on the set of non-extinction $|X_t|$ grows exponentially in $t$, we obtain
\begin{cor}
Let $\lambda_p =2\mu$ and $f \in \pspace{}(\R^{nd})$ be symmetric and degenerate of order $k-1$. Then conditionally on $Ext^c$, $t^{-k/2}|X_t|^{-(n-k/2)}U_t^n(f - \langle f, \eq^{\otimes n}\rangle)$ converges in distribution to $\binom{n}{k}L_2(\Pi_k f)$.
\end{cor}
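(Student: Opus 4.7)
The natural approach is to reduce to the canonical case already handled in Theorem \ref{thm:ustatistics-critical} via the Hoeffding decomposition of $U$-statistics, exactly as in the preceding corollary for the slow branching regime. Starting from the symmetric decomposition recalled in Section \ref{sec:aux_Ustat}, and noting that $\Pi_0 g = \ddp{g}{\eq^{\otimes n}}$ while $\Pi_j$ annihilates constants for $j \ge 1$, centring removes exactly the $j = 0$ contribution and yields
\begin{align*}
U_t^n\bigl(f - \ddp{f}{\eq^{\otimes n}}\bigr) = \sum_{j=1}^n \binom{n}{j} \frac{(|X_t|-j)!}{(|X_t|-n)!}\, U_t^j(\Pi_j f).
\end{align*}
The degeneracy hypothesis $\Pi_j f \equiv 0$ for $1 \le j \le k-1$ then collapses this to the range $j = k,\ldots, n$. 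Dividing by $t^{k/2}|X_t|^{n-k/2}$ rewrites each summand as
\begin{align*}
\binom{n}{j} \frac{(|X_t|-j)!}{|X_t|^{n-j}(|X_t|-n)!} \cdot t^{-k/2}|X_t|^{-j+k/2}\, U_t^j(\Pi_j f),
\end{align*}
and the combinatorial prefactor tends to $1$ a.s.\ on $Ext^c$ because $|X_t|\to\infty$ a.s.\ there.

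The bulk of the argument now consists in identifying the asymptotics of the analytic factor $t^{-k/2}|X_t|^{-j+k/2}\, U_t^j(\Pi_j f)$ for each $j\in\{k,\ldots,n\}$. For the leading index $j=k$ it equals $(t|X_t|)^{-k/2}\, U_t^k(\Pi_k f)$; since $\Pi_k f$ is canonical and belongs to $\pspace{}(\R^{kd})$ by continuity of the Hoeffding projection, Theorem \ref{thm:ustatistics-critical} applied in $k$ variables delivers convergence in distribution to $L_2(\Pi_k f)$. For $j>k$ I would factor
\begin{align*}
t^{-k/2}|X_t|^{-j+k/2}\, U_t^j(\Pi_j f) = \bigl(t/|X_t|\bigr)^{(j-k)/2} \cdot (t|X_t|)^{-j/2}\, U_t^j(\Pi_j f),
\end{align*}
observe that the second factor is tight by Theorem \ref{thm:ustatistics-critical}, and note that the first factor tends to $0$ in probability on $Ext^c$ because $|X_t|e^{-\lambda_p t}$ converges to a strictly positive limit $W$ while $t e^{-\lambda_p t}\to 0$. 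Slutsky's lemma then kills these higher order summands.

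Putting the pieces together via one further application of Slutsky yields the claimed limit $\binom{n}{k}L_2(\Pi_k f)$. The whole argument is essentially a transcription of the slow-branching corollary; the only minor subtlety is the joint use of convergence in distribution (of the $j=k$ summand) and convergence in probability to zero (of the $j>k$ summands) together with an a.s.\ convergent random rescaling, but this causes no real difficulty once everything is recast as joint convergence. I therefore do not anticipate any substantial obstacle beyond careful book-keeping.
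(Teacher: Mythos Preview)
Your proposal is correct and follows exactly the approach the paper indicates: Hoeffding decomposition together with Theorem \ref{thm:ustatistics-critical} and the continuity of $\Pi_k\colon \pspace{}(\R^{nd})\to\pspace{}(\R^{kd})$, with the higher-order terms killed via the exponential growth of $|X_t|$ on $Ext^c$. The paper's own justification is just the one-line remark preceding the corollary, and your write-up simply fills in those details.
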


\begin{cor}
Let $\lambda_p >2\mu$ and $f \in \pspace{}(\R^{nd})$ be symmetric and degenerate of order $k-1$. Then conditionally on $Ext^c$, $\exp(-(\lambda_pn - \mu k)t)U_t^n(f - \langle f, \eq^{\otimes n}\rangle)$ converges in probability to $\binom{n}{k}W^{n-k}L_3(\Pi_k f)$.
\end{cor}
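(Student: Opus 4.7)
The plan is to reduce the statement to Theorem \ref{thm:ustatistics-fast} via the symmetric Hoeffding decomposition. Since $f$ is degenerate of order $k-1$, we have $\Pi_j f \equiv 0$ for $1\le j<k$ while $\Pi_0 f = \langle f,\varphi^{\otimes n}\rangle$, so after subtracting the constant the decomposition reads
\[
U_t^n\bigl(f-\langle f,\varphi^{\otimes n}\rangle\bigr) = \sum_{j=k}^n \binom{n}{j}\frac{(|X_t|-j)!}{(|X_t|-n)!}\,U_t^j(\Pi_j f).
\]
Each $\Pi_j f$ is canonical (for $j\ge 1$) and by the continuity of $\Pi_j\colon \pspace{}(\R^{nd})\to\pspace{}(\R^{jd})$ it belongs to $\pspace{}(\R^{jd})$, so Theorem \ref{thm:ustatistics-fast} applies to each summand.

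Next I would identify the dominant term, namely $j=k$. Writing
\[
\frac{(|X_t|-k)!}{(|X_t|-n)!}\,U_t^k(\Pi_k f) = \Bigl(e^{-\lambda_p t}|X_t|\Bigr)^{n-k} e^{(n-k)\lambda_p t}\,e^{k(\lambda_p-\mu)t}\cdot e^{-k(\lambda_p-\mu)t}U_t^k(\Pi_k f)\cdot \bigl(1+O(e^{-\lambda_p t})\bigr),
\]
and noting that $(n-k)\lambda_p + k(\lambda_p-\mu) = n\lambda_p - k\mu$, after multiplying by the normalization $\exp(-(n\lambda_p-k\mu)t)$ the $j=k$ term becomes
\[
\binom{n}{k}\Bigl(e^{-\lambda_p t}|X_t|\Bigr)^{n-k}\cdot e^{-k(\lambda_p-\mu)t}U_t^k(\Pi_k f)\cdot\bigl(1+o(1)\bigr).
\]
By Theorem \ref{thm:ustatistics-fast} applied to $\Pi_k f$ (the ``in probability'' part of its statement) and the a.s.\ convergence $e^{-\lambda_p t}|X_t|\to V_\infty$, which on $Ext^c$ equals $W$, the right-hand side converges in probability (conditionally on $Ext^c$) to $\binom{n}{k}W^{n-k}L_3(\Pi_k f)$.

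It then remains to show that for every $j$ with $k<j\le n$ the contribution vanishes in probability. The same rewriting yields that the $j$-th summand, after normalization, equals
\[
\binom{n}{j}\Bigl(e^{-\lambda_p t}|X_t|\Bigr)^{n-j}\cdot e^{-j(\lambda_p-\mu)t}U_t^j(\Pi_j f)\cdot e^{-\mu(j-k)t}\cdot\bigl(1+o(1)\bigr).
\]
The first two factors are tight by the $n=j$ case of Theorem \ref{thm:ustatistics-fast} (jointly with the a.s.\ convergence of $e^{-\lambda_p t}|X_t|$), while the factor $e^{-\mu(j-k)t}$ tends to $0$ since $j>k$ and $\mu>0$. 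Thus this summand converges to $0$ in probability, which together with the previous step completes the proof.

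The main obstacle I foresee is purely bookkeeping: one must be careful in handling the ratios $(|X_t|-j)!/(|X_t|-n)!$ on the event $Ext^c$ (where $|X_t|\to\infty$ a.s.) to justify the approximation by $|X_t|^{n-j}$, and one must check that the joint tightness provided by Theorem \ref{thm:ustatistics-fast} for different orders $j$ allows one to sum the negligible contributions. Both points are standard once the exponential rates are matched as above.
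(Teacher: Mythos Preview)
Your proof is correct and follows exactly the approach the paper indicates: apply the symmetric Hoeffding decomposition, invoke Theorem \ref{thm:ustatistics-fast} for each canonical piece $\Pi_j f$, and use the exponential growth of $|X_t|$ on $Ext^c$ to match the rates and kill the $j>k$ terms via the extra factor $e^{-\mu(j-k)t}$. One small remark: since the convergence in Theorem \ref{thm:ustatistics-fast} is in probability (to a limit defined on the same probability space), you do not need any joint tightness argument to sum the contributions---convergence in probability is stable under finite sums and products, so the ``obstacle'' you anticipate does not arise.
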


\bibliographystyle{abbrv}
\bibliography{branching}

\end{document}